\documentclass[10pt]{article}

\usepackage[utf8]{inputenc}
\usepackage{amsmath,amsfonts,amssymb}
\usepackage{amsthm}
\usepackage[a4paper]{geometry}
\usepackage{color}
\usepackage{tikz}
\usetikzlibrary{patterns}
\usepackage{bm}
\usepackage{transparent}
\usepackage[dvipsnames]{xcolor}
\usepackage{graphicx}
\usepackage{subcaption}
\usepackage{wrapfig}
\usepackage{tabularx,float}
\usepackage[final, linkcolor=blue]{hyperref}
\hypersetup{
	colorlinks   = true, 
	urlcolor     = blue, 
	linkcolor    = blue, 
	citecolor   = red 
}

\usepackage{comment} 
\usepackage{tabularx} 
\usepackage{enumerate,mathrsfs}
\usepackage{mathtools}
\usepackage{enumitem}
\usepackage{orcidlink}
\usepackage{titling}
\usepackage{cleveref}
\usepackage[misc]{ifsym}
\usepackage{xargs}    
\usepackage[colorinlistoftodos,prependcaption,textsize=small]{todonotes}
\usepackage[titletoc]{appendix}
\usepackage[intoc]{nomencl}
\usepackage{soul}
\usepackage[font = small]{caption}
\usepackage{bbm}
\usepackage{overpic}
\usepackage[subrefformat=parens,labelformat=parens]{subcaption}
\usepackage{calc} 

\usepackage{mathrsfs} 
\usepackage{cite} 

\newcommand\numberthis{\addtocounter{equation}{1}\tag{\theequation}}

\reversemarginpar

\newtheorem{theorem}{Theorem}[section]
\newtheorem{proposition}[theorem]{Proposition}
\newtheorem{corollary}[theorem]{Corollary}
\newtheorem{lemma}[theorem]{Lemma}

\newtheorem{assumpA}{Assumption}
\newtheorem{assumpB}{Assumption}
\newtheorem{assumpC}{Assumption}

\definecolor{tumblue}{RGB}{0,101,189}
\definecolor{tumblue_dark}{RGB}{0,82,147}
\definecolor{tumblue_darker}{RGB}{7, 33, 64}
\definecolor{tumblue_light}{RGB}{100,160,200}
\definecolor{tumblue_lighter}{RGB}{152,198,234}
\definecolor{tumgray}{RGB}{153,153,153}
\definecolor{tumgrayd}{RGB}{90,100,110}
\definecolor{tumorange}{RGB}{227,114,34}

\theoremstyle{definition}
\newtheorem{definition}[theorem]{Definition}
\newtheorem{remark}[theorem]{Remark}

\numberwithin{equation}{section}

\newcommand{\eps}{\varepsilon}

\newcommand\norm[1]{\left\lVert#1\right\rVert}
\newcommand\normm[1]{\big\lVert#1\big\lVert}
\newcommand\normk[1]{\lVert#1\lVert}
\newcommand\abs[1]{\left\lvert#1\right\rvert}
\newcommand{\R}{\mathbb{R}}

\newcommand*{\dd}{\mathrm{d}}

\newcommand*{\nout}{n_\textup{out}}

\newcommand*{\nhid}{n_\textup{hid}}
\newcommand*{\nin}{n_\textup{in}}
\newcommand*{\RN}{\textup{RN}_{\eps,\delta}^k(\mathcal{X},\R^{\nout})}
\newcommand*{\RNs}{\textup{RN}_{\eps,\delta,\sigma}^k(\mathcal{X},\R^{\nout})}
\newcommand*{\MLP}{\textup{MLP}_{\delta}^k(\mathcal{X},\R^{\nout})}
\newcommand*{\FNN}{\textup{FNN}_{\delta}^k(\mathcal{X},\R^{\nout})}
\newcommand*{\NODE}{\textup{NODE}^k(\mathcal{X},\R^{\nout})}

\newcommand{\Wt}{\widetilde{W}}
\newcommand{\bt}{\tilde{b}}

\newcommand\blfootnote[1]{%
	\begingroup
	\renewcommand\thefootnote{}\footnote{#1}%
	\addtocounter{footnote}{-1}%
	\endgroup
}

\usepackage{fancyhdr}
\title{\textbf{Universal Approximation Constraints of \\ Narrow ResNets: The Tunnel Effect}}

\author{Christian Kuehn \orcidlink{0000-0002-7063-6173}$^{1,2,3}$,
 Sara-Viola Kuntz \orcidlink{0009-0000-4611-9742}$^{1,2,3}$
\& Tobias Wöhrer \orcidlink{0000-0001-6993-7385}$^4$
}

\date{
	\small{$^1$\textit{Technical University of Munich, School of Computation, Information and Technology, \\ Department of Mathematics, Boltzmannstraße 3, 85748 Garching, Germany} \\
	$^2$\textit{Munich Data Science Institute (MDSI), Garching, Germany } \\
	$^3$\textit{Munich Center for Machine Learning (MCML), München, Germany }\\
    $^4$\textit{TU Wien, Department of Mathematics, Institute of Analysis and Scientific Computing, Vienna, Austria}}\\[7mm]
	\large{March 30, 2026}
}

\begin{document}

	\maketitle
	
\begin{abstract}
We analyze the universal approximation constraints of narrow Residual Neural Networks (ResNets) both theoretically and numerically. For deep neural networks without input space augmentation, a central constraint is the inability to represent critical points of the input-output map. We prove that this has global consequences for target function approximations and show that the manifestation of this defect is typically a shift of the critical point to infinity, which we call the  ``tunnel effect'' in the context of classification tasks. While ResNets offer greater expressivity than standard multilayer perceptrons (MLPs), their capability strongly depends on the signal ratio between the skip and residual channels. We establish quantitative approximation bounds for both the residual-dominant (close to MLP) and skip-dominant (close to neural ODE) regimes. These estimates depend explicitly on the channel ratio and uniform network weight bounds. Low-dimensional examples further provide a detailed analysis of the different ResNet regimes and how architecture-target incompatibility influences the approximation error.
	\end{abstract}
	
	\vspace{2mm}
	
	\noindent {\small \textbf{Keywords: neural ODEs, deep learning, universal approximation, ResNets}}
	
	\vspace{1mm}
	
	\noindent {\small \textbf{MSC2020: 41A30, 58K05, 68T07}}
	\vspace{3mm}
    
	\blfootnote{\textcolor{white}{.}\\[-2.5mm]
	\hspace{-5.4mm}\Letter \; ckuehn@ma.tum.de (Christian Kuehn)  \\[0.8mm]
	\Letter\; saraviola.kuntz@ma.tum.de (Sara-Viola Kuntz) \\[0.8mm]
	\Letter\; tobias.woehrer@tuwien.ac.at (Tobias Wöhrer) 
	}

\newpage
	{\hypersetup{linkcolor=black}
    \tableofcontents}
	
\newpage 
    \section{Introduction}\label{sec:introduction}

	Universal approximation theory  of neural networks has initially focused on shallow networks of arbitrary width \cite{Cybenko1989, Hornik1989}. However, the success of deep neural networks has shifted interest toward architectures characterized by bounded width and significant depth \cite{LuPuWang2017expressive}. As a result, research in recent years focused on determining the expressive power of these practical implementations where layer capacity is limited.
    
	 \begin{wrapfigure}{r}{0.3\textwidth}
    		\includegraphics[width = 0.3\textwidth, trim={1em 2em 8em 2em},clip]{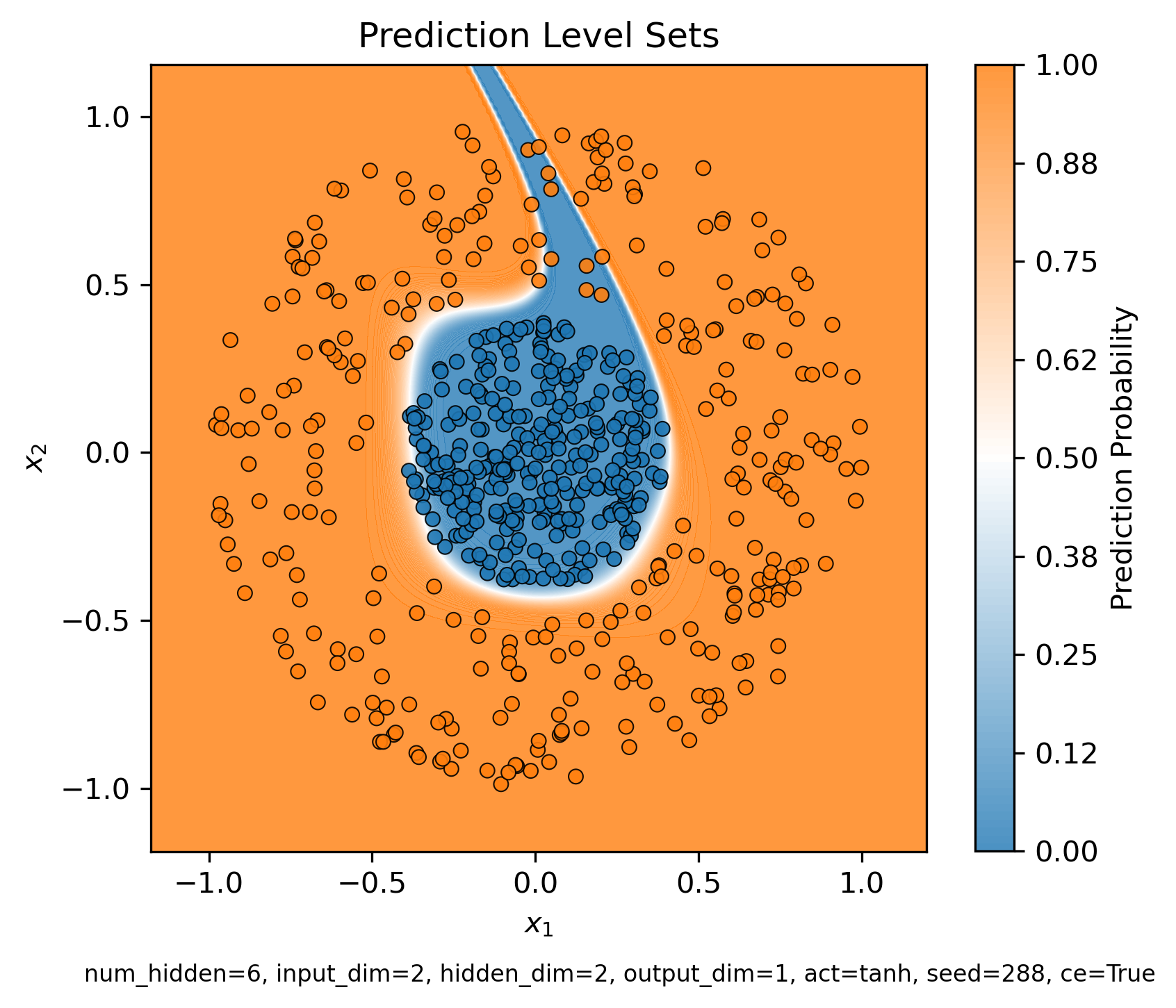}
    		\caption{\small The ``tunnel effect'' in non-augmented MLPs. \label{fig:intro}}	
\end{wrapfigure}

	It is established that both feedforward neural networks and neural ODEs are unable to approximate certain functions well for narrow architectures where the natural input space is not augmented \cite{Kuehn2023, Dupont2019}. Mathematically, a central issue is the inability of these models' input-output maps to express critical (or stationary) points. This topological constraint does not just have local consequences but limits global approximation and generalization capabilities. The classification of the two-dimensional circle toy dataset illustrates this limitation: a trained MLP with two neurons per layer and an arbitrary number of layers generates a ``tunnel'' in the prediction level sets which leads to unavoidable misclassifications, see Figure~\ref{fig:intro}. Since the model structure cannot express a critical point, it must shift the critical point to infinity which creates the barrier. Remarkably, the same observation holds for the structurally very different neural ODEs, where the input-output map is realized through continuous trajectories \cite{Dupont2019, Chen2018}.

	The structure of Residual Neural Networks (ResNets) \cite{he16} can be understood as ``in-between'' classical feedforward neural networks (such as MLPs) and neural ODEs, and is able to resolve these topological constraints \cite{lj18resnet}. The idea of ResNets is to split each layer into an identity channel that simply advances the layer input unchanged and a residual channel that is a nonlinear transformation containing the trainable parameters. The layer update is formulated as $h_{l} = h_{l-1} + f(h_{l-1}, \theta_l)$, such that the network learns the difference between the layer input and the layer output rather than the full transformation. This approach enables the training of very deep architectures as it provides stable gradient computations \cite{haber2017, Weinan2017}. Beyond these remarkable optimization benefits, it is striking that the architecture also improves the expressivity of the input-output map and is able to embed critical points without additional input augmentation. However, we observe that this ability depends rather strongly on the choice of parameter initialization. Let us again consider the circle toy dataset. Training a ResNet of width two, moderate depth, and standard weight initialization for sigmoidal activations, the ``tunnel effect'' of Figure~\ref{fig:intro} typically does not appear. By contrast, when the parameters are rescaled closer to $0$ at initialization, the ``tunnel'' appears after training.

    This work proves that the ``tunnel effect'' appears whenever ResNets are unable to embed critical points but try to approximate target functions that contain such points. It further provides explicit estimates on the embedding restrictions of ResNets in the model parameter regimes close to MLPs and neural ODEs. This implies that ResNets that roughly maintain a balance between the identity channel and the transformation channel have advantageous expressivity in practical settings. We formulate these results by parameterizing the ratio between the identity and residual channel of each ResNet layer. 

    \subsubsection*{ResNet Embedding Capabilities Depending on Channel Ratio}
    Let us outline the embedding results in more detail by assuming ResNets of constant input width and network parameters $\theta$ (weights and biases) that are restricted to a fixed parameter regime,  $\theta\in \Theta$, where $\Theta$ fulfills explicit (and reasonable in implementations) upper and lower bounds. In this context, we reformulate the ResNet layer iteration with a \emph{skip parameter}~$\eps> 0$ and a \emph{residual parameter}~$\delta > 0$ as
    \begin{equation}\label{eq:iterationintro}
        h_{l} = \eps h_{l-1} + \delta f_l(h_{l-1}, \theta_l),\quad l\in\{1, \ldots, L\},
    \end{equation}
    where the canonical choice of the residual function is given by
    \begin{equation}\label{eq:introcanonical}
        f_l(\theta_l, h)=\Wt_l \sigma (W_l h + b_l) + \bt_l,
    \end{equation}
    with weight matrices $W_l, \Wt_l$, bias vectors $\bt_l, b_l$ and a sigmoidal activation function $\sigma$. 
    Our goal is to show how the ResNets' ability to express critical points depends on the ratio $\alpha := \frac{\delta}{\eps}>0$. 

Let us start with pointing out that we can always normalize \eqref{eq:iterationintro} to $\alpha = 1$ by choosing $\Wt_l = \hat{W}_l \eps / \delta$. But then, we naturally also have to rescale the considered model parameter regime $\Theta$ when comparing embedding capabilities. Here we take the perspective of fixing $\Theta$ and analyzing the expressivity for different regimes of $\alpha> 0$.  
\begin{itemize}
    \item \emph{$0< \alpha \ll 1$}: In the case that $\alpha$ is sufficiently small, in relation to the Lipschitz constant $K_f$ of the residual transformation $f_l$, we show that ResNets are unable to express critical points. This is true independently of the number of ResNet layers $L$. We also show that this specifically holds for ResNets that are Euler discretizations of continuous neural ODEs (where $\eps = 1$), as long as the step size $\alpha = 1/L$ is sufficiently small, which implies that the number of layers $L$ is sufficiently large. This links the discrete ResNet expressivity to that of continuous neural ODEs and we provide explicit estimates depending on $\alpha$ and the $\Theta$-bounds, see Figure~\ref{fig:alpha1}.
\item \emph{$\alpha \gg 1$}:
In the case that the ratio $\alpha$ is sufficiently large in relation to the lower Lipschitz constant $k_f$, we show that the ability of ResNets to express critical points also breaks down. When setting $\eps = 0$ in \eqref{eq:iterationintro}, the skip connection vanishes and we recover a standard MLP. For such MLPs without input space augmentation, it is known that critical points cannot be embedded, see Figure~\ref{fig:alpha1}. We extend these results to ResNets with $\alpha$ sufficiently large and give explicit bounds dependent on the upper and lower bounds of the parameter space $\Theta$.
\end{itemize}
For canonical residual functions \eqref{eq:introcanonical} with sigmoidal activation functions the constant $K_f$ is determined by a uniform upper bound on the weights and biases, while $k_f$ additionally relies on a uniform lower bound on the singular values of the matrix product $\Wt_l W_l$.
We reiterate that the expressivity results remain relevant for ``standard'' ResNets, where $\eps = \delta = 1$. In this case, the restrictions are expressed through the bounds of the parameter regime $\Theta$, necessary to obtain suitable Lipschitz constants $K_f$ and $k_f$.
    
Our theoretical results do not focus on the trainability of ResNet through gradient-based algorithms, but rather on the fundamental limitations on expressivity depending on the parameter regimes. Techniques such as batch normalization are training-oriented modifications that employ related ideas. Their objective, however, is to ensure the reachability of optimal parameters assuming a well-posed setting. ResNets with rescaled channels were originally also investigated from such a trainability perspective \cite{he16}. To link the implications of our theoretical results to the parameter training of ResNet models, we provide numerical examples for the different parameter regimes.

\begin{figure}
	\centering
	\begin{overpic}[scale = 0.45,tics=10]{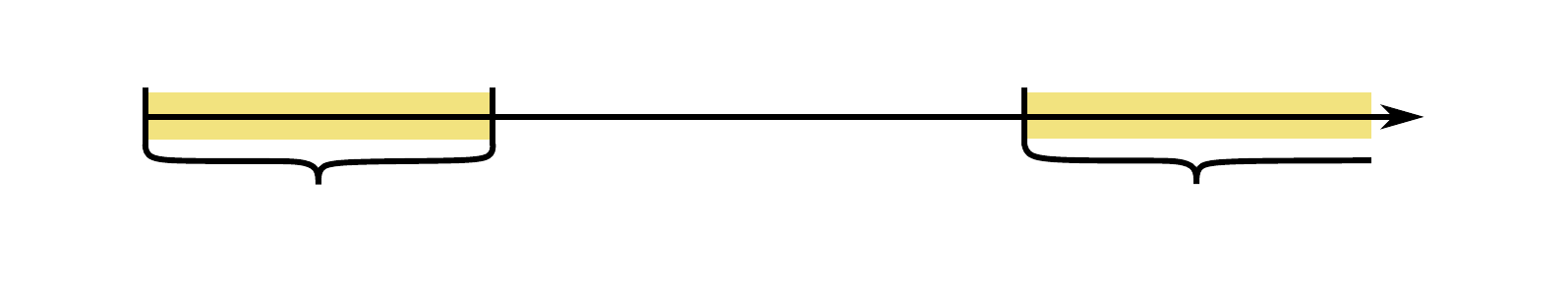}
        \put(8.5,14){$0$}
        \put(30,15){$\frac{1}{K_f}$}
        \put(64.,15){$\frac{1}{k_f}$}
		\put(92.2,10.5){$\alpha$}
        \put(98,11){MLP}
        \put(97,8){regime}
        \put(-11.5,11){neural ODE}
        \put(-9,8){regime}
		\put(9.5,3){no critical points}
        \put(66,3){no critical points}
	\end{overpic}
	\caption{Expressivity of the non-augmented ResNets depending on the ratio $\alpha \coloneqq \frac{\delta}{\eps}$ and the upper and lower Lipschitz constants of the residual function.}
	\label{fig:alpha1}
\end{figure}

    \subsubsection*{Related Work}

    The work \cite{lj18resnet} proves universal approximation for ResNets with one-neuron hidden layer and ReLU activation. According to our definitions below, this corresponds to a non-augmented ResNet architecture of input width and hence is in agreement with our results. While the approximation scheme is constructive, the model depth and model parameters do not represent typically trained ResNets. In comparison, we assume bounded and smooth activation functions, and prove weight dependent estimates.

    \cite{MWSB24} proves that if the weights of a ResNet are initialized from a (Lipschitz) continuous training parameter function, then the parameters maintain this property throughout a gradient descent optimization, provided the weights maintain a uniform upper bound through rescaling (weight clipping). Our results characterize the expressivity of different parameter regimes of ResNets which improves interpretability. While we observe the parameter regimes remain throughout training, the corresponding implicit regularization proofs are an open problem.

   In \cite{HardtMa17identity} the expressivity of ReLU ResNets is investigated. The paper considers ResNets simplified to a linear structure and shows that if the spectral norm of each $A_l$ is small, $x\mapsto Ax$ cannot express non-degenerate critical points. We prove results of similar kind for general nonlinear ResNets.
   
   The works \cite{zhang20approx, LuPuWang2017expressive} investigate the expressivity of ResNets but always assume augmented input spaces.

\subsubsection*{Structure of the Paper}

In Section~\ref{sec:resnets}, we introduce the considered ResNet architectures, derive their input gradients, and examine the global topological restrictions caused by the absence of critical points. In Section~\ref{sec:node}, we link ResNets with small channel ratios to continuous neural ODEs, establishing explicit approximation errors and proving they share the same inability to embed critical points. In Section~\ref{sec:mlp}, we analyze large channel ratios, showing that ResNets behave like perturbed feed-forward neural networks and similarly fail to express critical points. Finally, in Section~\ref{sec:numerics}, we present numerical experiments on low-dimensional datasets to illustrate the theoretical constraints and visualize the resulting ``tunnel effect''.

\section{Critical Points in Residual Neural Networks}
\label{sec:resnets}

We begin in Section~\ref{sec:resnet_universalapprox} to recall the universal approximation and universal embedding properties of continuous input–output mappings relevant to our setting. We show that, for such mappings, the inability to represent a critical point in a single output coordinate implies that the universal approximation property cannot hold.

Section~\ref{sec:resnet_model} introduces the Residual Neural Network (ResNet) architectures considered in this work, whose layers depend on a skip parameter~$\eps$ and a residual parameter~$\delta$. Depending on the input and hidden dimensions, we distinguish between non-augmented and augmented architectures.

Section~\ref{sec:assumptions} states the specific assumptions on ResNets that are necessary for our analysis in the remaining part of this work.

In Section~\ref{sec:resnet_gradient}, we derive the ResNet input gradient. In the non-augmented case, we identify conditions under which the gradient never vanishes. In particular, we prove that if the ratio $\alpha \coloneqq \frac{\delta}{\eps}$ is sufficiently small or sufficiently large, the network admits no critical points.

Finally, Section~\ref{sec:resnet_tunnel} examines the topological consequences of the absence of critical points. For small~$\alpha$, the networks can be interpreted as discretizations of neural ODEs, whereas for large~$\alpha$, they act as perturbations of classical feed-forward neural networks (FNNs). In both regimes, we derive bounds on the distance to the respective limiting models and discuss the resulting topological restrictions.

\subsection{Universal Approximation and Critical Points}
\label{sec:resnet_universalapprox}

Our goal in this work is to study the expressivity of ResNets depending on the specific architectural choices. For that purpose, we specialize the general definitions of universal approximation and universal embedding from~\cite{kk2025} to the Euclidean setting, focusing on the space $C^k(\mathcal{X},\R^{\nout})$, $\mathcal{X}\subset \R^{\nin}$ open, of $k$-times continuously differentiable functions $f: \mathcal{X} \rightarrow \R^{\nout}$. We use the max-norm $\norm{y}_\infty \coloneqq \max_{i \in \{1,\ldots,n\}} \abs{y_i}$ for vectors $y\in \R^n$ and denote the induced sup-norm of $f \in C^k(\mathcal{X},\R^{\nout})$ on a set $\mathcal{D}\subset \mathcal{X}$ by
$$\norm{f}_{\infty,\mathcal{D}} \coloneqq \sup_{x \in \mathcal{D}} \norm{f(x)}_\infty = \sup_{x \in \mathcal{D}} \max_{i \in \{1,\ldots,\nout\}} \abs{f_i(x)}.$$

\begin{definition}[Universal Approximation \cite{kk2025}]
	A neural network family $\mathcal{N} = \{\Phi_\theta: \mathcal{X} \rightarrow \R^{\nout}\}_{\theta \in \Theta}$ with $\mathcal{X}\subset \R^{\nin}$ open and parameters $\theta\in \Theta \subset \R^p$ has the universal approximation property with respect to the space $\left( C^k(\mathcal{X},\R^{\nout}), \norm{\cdot}_{\infty,\mathcal{X}}\right)$, $k \geq 0$, if for every $\varepsilon >0$, every compact subset $\mathcal{K}\subset \mathcal{X}$ and every function $\Psi \in C^k(\mathcal{X},\R^{\nout})$, there exists~$\theta\in \Theta$, such that $\norm{\Phi_\theta-\Psi}_{\infty,\mathcal{K}} < \varepsilon$.
\end{definition}

As we study in the upcoming sections the input-output map  of different neural network architectures directly, it is useful to additionally introduce the stronger concept of universal embedding, which requires an exact representation of the target function.

\begin{definition}[Universal Embedding \cite{kk2025}]
    A neural network family $\mathcal{N} = \{\Phi_\theta: \mathcal{X} \rightarrow \R^{\nout}\}_{\theta \in \Theta}$ with $\mathcal{X}\subset \R^{\nin}$ open and parameters $\theta\in \Theta \subset \R^p$ has the universal embedding property with respect to the space $C^k(\mathcal{X},\R^{\nout})$, $k \geq 0$, if for every function $\Psi \in C^k(\mathcal{X},\R^{\nout})$, there exists~$\theta\in \Theta$, such that $\Phi_\theta(x) = \Psi(x)$ for all $x \in \mathcal{X}$.
\end{definition}

An important property characterizing the dynamics of the input-output map of neural networks $\Phi \in C^1(\mathcal{X},\R^{\nout})$ with input space $\mathcal{X}\subset \R^{\nin}$ open and output space $\R^{\nout}$, is the existence of critical points, i.e., zeros of the network input gradient $\nabla_x \Phi$. It is crucial not to confuse the input gradient $\nabla_x \Phi$ with the parameter gradient $\nabla_\theta \Phi$, where $\theta\in\Theta$ denotes all parameters of the considered neural network. The parameter gradient $\nabla_\theta \Phi$ is needed for backpropagation algorithms in the training process of neural networks.

It is instructive to study the existence of critical points in the input-output map of different neural network architectures, as this has direct implications for their embedding and approximation capabilities \cite{kk2025}. The following theorem implies that neural network architectures in which at least one component map has no critical point cannot have the universal approximation property.

\begin{theorem}[Maps without Critical Points are not Universal Approximators]
	\label{th:uniapprox}
	Consider a set of functions $\mathcal{S}\subset C^1(\mathcal{X},\R^{\nout})$, $\mathcal{X}\subset \R^{\nin}$ open, wherein for every map $\Phi \in \mathcal{S}$, there exists a component $i\in \{1,\ldots,\nout\}$, such that $\nabla_x\Phi_i(x) \neq 0$ for all $x\in \mathcal{X}$. 
	Then the set $\mathcal{S}$ cannot have the universal approximation property with respect to the space $\left( C^k(\mathcal{X},\R^{\nout}), \norm{\cdot}_{\infty,\mathcal{X}}\right)$ for every $k \geq 0$.
\end{theorem}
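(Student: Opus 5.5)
We argue by contradiction. Suppose $\mathcal{S}$ has the universal approximation property with respect to $\left( C^k(\mathcal{X},\R^{\nout}), \norm{\cdot}_{\infty,\mathcal{X}}\right)$ for some $k\geq 0$. We pick a smooth target with a strict local minimum in every component and show that no map in $\mathcal{S}$ can stay uniformly close to it on a compact ball.

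\textbf{Choice of target and compact set.} Fix $x_0\in\mathcal{X}$ and $r>0$ with $\overline{B_r(x_0)}\subset\mathcal{X}$, and set $\mathcal{K}\coloneqq\overline{B_r(x_0)}$. Define
\[
\Psi(x)\coloneqq \|x-x_0\|_2^2\,(1,\ldots,1)^\top .
\]
This map is $C^\infty$, hence lies in $C^k(\mathcal{X},\R^{\nout})$ for every $k$. Each component $\Psi_i$ equals $0$ at $x_0$ and equals $r^2$ on $\partial B_r(x_0)$. Choose $\varepsilon\coloneqq r^2/3$. By universal approximation there is $\Phi\in\mathcal{S}$ with $\norm{\Phi-\Psi}_{\infty,\mathcal{K}}<\varepsilon$. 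This bound holds in every component, in particular in the component $i$ that the hypothesis provides for this $\Phi$, with $\nabla_x\Phi_i\neq0$ on $\mathcal{X}$. Note that $i$ depends on $\Phi$; this is harmless because $\Psi$ has the same shape in every component.

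\textbf{Forcing an interior minimum.} On the compact set $\mathcal{K}$ the continuous function $\Phi_i$ attains its minimum at some point $x^*$. We have $\Phi_i(x_0)<\varepsilon$, while on $\partial B_r(x_0)$ we have $\Phi_i>r^2-\varepsilon=2\varepsilon$. Hence $x^*$ cannot lie on the boundary, so $x^*\in B_r(x_0)$, which is open in $\mathcal{X}$. An interior minimum of a $C^1$ function is a critical point, so $\nabla_x\Phi_i(x^*)=0$. This contradicts the hypothesis, so the universal approximation property fails for every $k\ge0$.

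\textbf{Main point of care.} The only delicate step is the choice of $\varepsilon$ relative to the boundary gap $r^2$. It must guarantee that the approximation error cannot push the minimum of $\Phi_i$ onto $\partial\mathcal{K}$. The margin $\varepsilon=r^2/3$ gives a strict inequality $\Phi_i(x_0)<\varepsilon<2\varepsilon<\min_{\partial\mathcal{K}}\Phi_i$. Since the sup-norm is taken componentwise via $\norm{\cdot}_\infty$, the estimate transfers to the single component $\Phi_i$ without any loss.
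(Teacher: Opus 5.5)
Your proof is correct and uses the same mechanism as the paper: a quadratic target $\Psi_z$ with an isolated minimum on a compact ball forces any sufficiently close $C^1$ component to attain an interior minimum, which is a critical point. The differences are that you prove this scalar obstruction directly, with the explicit margin $\varepsilon = r^2/3$, instead of citing \cite[Theorem 2.6]{kk2025}, and that placing the quadratic in every component handles the dependence of the critical-point-free index $i$ on $\Phi$, which the paper's wording leaves implicit.
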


\begin{proof}
	The statement follows directly by generalizing \cite[Theorem 2.6]{kk2025} from scalar maps to multiple output components: by the respective theorem, there exists a compact set $\mathcal{K}\subset \mathcal{X}$ with non-empty interior $\text{int}(\mathcal{K})$, such that the sup-norm between the quadratic function 
	\begin{equation} \label{eq:Psi_y}
		\Psi_z: \mathcal{K}\rightarrow \R, \quad \Psi_z(x) = \sum_{j = 1}^{\nin}(x_j-z_j)^2 \quad \text{with} \quad z\in \text{int}(\mathcal{K})
	\end{equation}
	and any scalar map without any critical point cannot be made arbitrarily small. Consequently, the quadratic function $\Psi_z$ also cannot be approximated with arbitrary precision by any of the considered component maps $\Phi_i \in C^1(\mathcal{X},\R)$ satisfying $\nabla_x\Phi_i(x) \neq 0$ for all $x\in \mathcal{X}$, i.e., $\norm{\Phi_i-\Psi_z}_{\infty,\mathcal{K}} \geq \mu $ for some $\mu>0$. Hence, every map $\Psi \in C^k(\mathcal{X},\R^{\nout})$ with $i$-th component map $\Psi_i \vert_{\mathcal{K}} = \Psi_z$ as defined in~\eqref{eq:Psi_y} cannot be approximated with arbitrary precision by any map $\Phi \in \mathcal{S}$ on $\mathcal{K}$, where the $i$-th component fulfills the given assumptions.
\end{proof}

As Theorem~\ref{th:uniapprox} considers maps in which single output components cannot have any critical points, we can, without loss of generality, restrict our upcoming analysis to scalar neural network architectures with $\nout = 1$.
In the recent work~\cite{kk2025}, the expressivity of multilayer perceptrons (MLPs) and neural ODEs is studied by characterizing the existence and regularity of critical points. One of the main results shows that non-augmented MLPs and non-augmented neural ODEs cannot have critical points and hence lack the universal approximation property (see also Section~\ref{sec:node_criticalpoints} and Section~\ref{sec:mlp_criticalpoints}). In the augmented case, the universal approximation property of MLPs and neural ODEs is well established in the literature \cite{Dupont2019,Hornik1989,Hornik1991,Kidger2022,Pinkus1999,Zhang2020a}.
In the following sections, we first introduce ResNet architectures and then study their expressivity using Theorem~\ref{th:uniapprox}.

\subsection{Residual Neural Networks}
\label{sec:resnet_model}

Residual neural networks (ResNets) are feedforward neural networks structured in layers $h_l \in \R^{\nhid}$, $l \in \{1,\ldots,L\}$, consisting of $\nhid$ nodes each. Given an initial layer $h_0 \in \R^{\nhid}$, the hidden layers are iteratively updated by
\begin{equation} \label{eq:resnet_updaterule} 
	h_{l} = \eps h_{l-1} + \delta f_l(h_{l-1},\theta_l), \qquad  l \in \{1, \ldots, L\}, 
\end{equation}
with a (typically nonlinear) \emph{residual function} $f_l:\R^{\nhid} \times \R^{p_{l}} \rightarrow \R^{\nhid}$ and \emph{hidden parameters} $\theta_{l} \in \Theta_l \subset \R^{p_{l}}$, where $ \Theta_l$ denotes the set of parameters of layer $l$ and $\Theta = \Theta_1 \times \ldots \times \Theta_L \subset \R^p$ the total parameter space of the ResNet. The layer update rule~\eqref{eq:resnet_updaterule} includes two terms: a skip connection scaled by the \emph{skip parameter} $\eps> 0$, and a residual term weighted by the \emph{residual parameter} $\delta> 0$.

In contrast to classical feed-forward neural networks, corresponding to the case $\eps = 0$ and $\delta>0$ in~\eqref{eq:resnet_updaterule} (cf.~Section~\ref{sec:mlp}), the ResNet update rule contains the linear term $\eps h_{l-1}$. It is called a skip- or shortcut connection (cf.~\cite{he16}), as it allows the layer input to bypass the transformation of the residual function, see Figure~\ref{fig:skipconnection}\subref{fig:skipconnection_a} for a visualization. The residual function $f_l$ can be an arbitrary map, but typical choices include
\begin{equation} \label{eq:resnet_typical_f} 
	f_l(h_{l-1},\theta_{l}) \coloneqq \widetilde{W}_l\sigma_l(W_lh_{l-1}+b_l)+\tilde{b}_l = \widetilde{W}_l\sigma_l(a_l)+\tilde{b}_l
\end{equation}
for $l \in \{1,\ldots,L\}$ with parameters $\theta_{l} \coloneqq (W_l, \widetilde{W}_l,b_l,\tilde{b}_l)$ consisting of weight matrices $W_l \in \R^{m_l \times \nhid}$, $\widetilde{W}_l \in \R^{\nhid \times m_l}$, and biases $b_l \in \R^{m_l}$, $\tilde b_l \in \R^{\nhid}$. The map $\sigma_l: \R \rightarrow \R$ is called an \emph{activation function}, applied component-wise to the \emph{pre-activated states} $a_l \coloneqq W_l h_{l-1} + b_l \in \R^{m_l}$, as visualized in Figure~\ref{fig:skipconnection}\subref{fig:skipconnection_b}. By a slight abuse of notation, we also write $\sigma_l$ for the component-wise extension $\sigma_l:\R^{m_l} \rightarrow \R^{m_l}$ as in \eqref{eq:resnet_typical_f}. The intended meaning will be clear from the argument. Typical choices for the activation function $\sigma_l$ include $\tanh$, $\operatorname{sigmoid}(y) = (1+e^{-y})^{-1}$ or $\operatorname{ReLU}(y) = \max\{0,y\}$. The hidden dimension $\nhid$ is constant across the layers of the ResNet, whereas the dimension $m_l$ of the pre-activated state $a_l$, which also controls the number of parameters used in layer~$h_l$, can vary.

To be flexible in the input and output dimensions of ResNets, two additional transformations are applied before the layer $h_0$ and after the layer $h_L$, resulting in the input-output map
\begin{equation}\label{eq:resnet}
	\Phi: \mathcal{X} \rightarrow \R^{\nout}, \;\mathcal{X}\subset \R^{\nin}, \qquad \Phi(x) = \tilde{\lambda}(h_L(\lambda(x))),
\end{equation}
with \emph{input transformation} $\lambda: \R^{\nin} \rightarrow \R^{\nhid}$, \emph{output transformation} $\tilde{\lambda}: \R^{\nhid}\rightarrow \R^{\nout}$ and composite map $h_L:\R^{\nhid} \rightarrow \R^{\nhid}$, which maps the \emph{transformed input} $h_0 = \lambda(x)$ to the last hidden layer $h_L$ via the iterative update rule \eqref{eq:resnet_updaterule}. For the ResNet architecture \eqref{eq:resnet}, we call $x \in \mathcal{X}$ the \emph{input}, $h_0 = \lambda(x)$ the \emph{transformed input}, $h_1, \ldots, h_L$ the \emph{hidden layers}, and $\Phi(x)$ the \emph{output} of the neural network. Often, the transformations $\lambda$ and $\tilde{\lambda}$ are chosen to be affine linear, but nonlinear functions are also possible. For example, for classification tasks with $\nout = 1$, the output is often normalized to a probability, i.e., $\Phi(x)\in[0,1]$.  

\begin{figure}[h]
	\centering
	\begin{subfigure}{0.38\textwidth}
		\centering
		\begin{overpic}[scale = 0.5,tics=10]{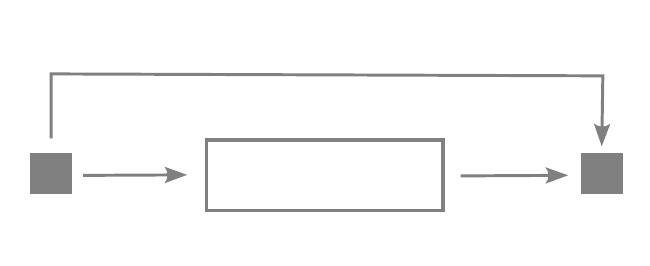}
			\put(3,3.5){\textcolor{black!70}{$h_{l-1}$}}
			\put(34.5,12){\textcolor{black!70}{$f_l(h_{l-1},\theta_{l})$}}
			\put(90.5,3.5){\textcolor{black!70}{$h_l$}}
			\put(74.5,7){\textcolor{black!70}{$\cdot \; \delta$}}
			\put(47,31){\textcolor{black!70}{$\cdot \; \eps$}}
		\end{overpic}
		\caption{General ResNet update rule~\eqref{eq:resnet_updaterule}.}
		\label{fig:skipconnection_a}
	\end{subfigure}
	\hspace{5mm}
	\begin{subfigure}{0.57\textwidth}
		\centering
		\begin{overpic}[scale = 0.5,tics=10]{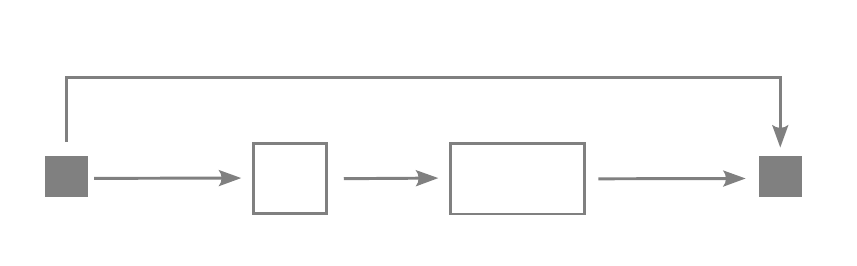}
			\put(4.5,2.5){\textcolor{black!70}{$h_{l-1}$}}
			\put(32,9){\textcolor{black!70}{$a_l$}}
			\put(54.7,9){\textcolor{black!70}{$\sigma_l(a_l)$}}
			\put(89.5,2.5){\textcolor{black!70}{$h_l$}}
			\put(75,5){\textcolor{black!70}{$\cdot \; \delta$}}
			\put(42,24){\textcolor{black!70}{$\cdot \; \eps$}}
			\put(71,12){\small\textcolor{black!70}{aff.\ lin.}}
			\put(12,12){\small\textcolor{black!70}{aff.\ lin.}}
		\end{overpic}
		\caption{ResNet update rule~\eqref{eq:resnet_typical_f} with two affine linear maps.}
		\label{fig:skipconnection_b}
	\end{subfigure}
	\caption{Structure of the update rule of a residual neural network with skip parameter $\eps$ and residual parameter $\delta$. Every layer $h_l \in \R^{\nhid}$ is represented by a square.}
	\label{fig:skipconnection}
\end{figure}

The case $\eps = \delta = 1$ in the update rule~\eqref{eq:resnet_updaterule} corresponds to classical ResNet architectures as introduced in~\cite{he16}. The general case with $\eps, \delta > 0$ can also be seen as a special case of highway networks with carry gate~$\eps$ and transform gate $\delta$~\cite{Srivastava2015}. In the case $\eps = 1$ and $\delta \rightarrow 0$, ResNets are connected to neural ODEs, which we introduce in Section~\ref{sec:node}. The case $\delta>0$ and $\eps  = 0$ with residual function $f_l$ as defined in~\eqref{eq:resnet_typical_f} leads to classical feed-forward neural networks, such as multilayer perceptrons, introduced in Section~\ref{sec:mlp}. In our upcoming analysis, we focus on ResNets with $\eps,\delta>0$, which include both a linear skip connection and a non-linear residual term. In the following, we define the class of considered ResNet architectures.

\begin{definition}[Residual Neural Network] \label{def:resnet}
	For $k \geq 0$, the set $\RN\subset C^k(\mathcal{X},\R^{\nout})$ with $\mathcal{X}\subset \R^{\nin}$ open, $\eps,\delta>0$, denotes all ResNet architectures $\Phi: \mathcal{X} \rightarrow \R^{\nout}$ as defined in~\eqref{eq:resnet} with 
	\begin{itemize}
		\item input transformation $\lambda \in C^k(\R^{\nin},\R^{\nhid})$,
		\item output transformation $\tilde\lambda \in C^k(\R^{\nhid},\R^{\nout})$,
		\item for $l \in \{1,\ldots,L\}$: residual functions $f_l(\cdot, \theta_l)\in C^{k}(\R^{\nhid}, \R^{\nhid})$ for each fixed $\theta_l \in \Theta_l \subset \R^{p_l}$.
	\end{itemize}
	In the case of $f_l$ of the form~\eqref{eq:resnet_typical_f}, the condition $f_l(\cdot, \theta_l)\in C^{k}(\R^{\nhid}, \R^{\nhid})$ is equivalent to $\sigma_l\in C^{k}(\R,\R)$ for $l \in \{1,\ldots,L\}$. We call the corresponding ResNets \emph{canonical} and denote the subset of \emph{canonical ResNet architectures} by $\RNs \subset \RN \subset C^k(\mathcal{X},\R^{\nout})$.
\end{definition}

The regularity of ResNets $\Phi\in \RN$, $k\geq 0$, $\mathcal{X}\subset \R^{\nin}$ open, follows directly from the regularity of the residual functions $f_l$ or the activation functions~$\sigma_l$, respectively.

\begin{remark}
    Throughout this work, we assume strictly positive parameters $\eps$ and $\delta$ to simplify the notation. This choice is based on typical ResNets (where $\eps = \delta = 1$) and their interpretation as neural ODE discretizations (where $\eps = 1$, $\delta>0$). As most of the upcoming results only depend on the magnitude of the ratio $\alpha \coloneqq \frac{\delta}{\eps}$, they also extend to the case $\alpha <0$, by considering the absolute value~$\abs{\alpha}$ in the respective bounds.
\end{remark}

Depending on the input dimension $\nin$ and the hidden dimension $\nhid$, we distinguish between non-augmented and augmented ResNet architectures.

\begin{definition}[ResNet Classification]\label{def:resnet_classes}
	The class of ResNets $\RN$, $k \geq 0$, $\mathcal{X}\subset \R^{\nin}$ open, is subdivided as follows:
	\begin{itemize}
		\item \emph{Non-augmented ResNet} $\Phi \in \textup{RN}_{\eps,\delta,\textup{N}}^k(\mathcal{X},\R^{\nout})$: it holds $\nin \geq \nhid$. 
		\item \emph{Augmented ResNet} $\Phi \in \textup{RN}_{\eps,\delta,\textup{A}}^k(\mathcal{X},\R^{\nout})$: it holds $\nin < \nhid$. 
	\end{itemize}
	For canonical ResNets $\Phi \in \textup{RN}_{\eps,\delta,\sigma}^k(\mathcal{X},\R^{\nout})$, we analogously denote non-augmented architectures by $\textup{RN}_{\eps,\delta,\sigma,\textup{N}}^k(\mathcal{X},\R^{\nout})$ and augmented architectures by $\textup{RN}_{\eps,\delta,\sigma,\textup{A}}^k(\mathcal{X},\R^{\nout})$.
\end{definition}

The concept of non-augmented and augmented ResNets is visualized in Figure~\ref{fig:resnet_architectures}. The classification of ResNets is independent of the choice of the residual functions $f_l$, and hence also independent of the intermediate dimensions $m_l$ in the case of canonical ResNets. The distinction of architectures becomes relevant for the analysis of the expressivity of the ResNet input-output map in the upcoming Section~\ref{sec:resnet_gradient} and Section~\ref{sec:resnet_tunnel}. Our work mainly focuses on the restrictions of non-augmented ResNet architectures induced by Theorem~\ref{th:uniapprox}. Before we calculate the ResNet input gradient in Section~\ref{sec:resnet_gradient} to characterize the existence of critical points, we discuss in the following  Section~\ref{sec:assumptions} the assumptions on non-augmented architectures relevant for our analysis.

\begin{figure}[h]
	\centering
	\begin{subfigure}{0.41\textwidth}
		\centering
		\begin{overpic}[scale = 0.25,tics=10]{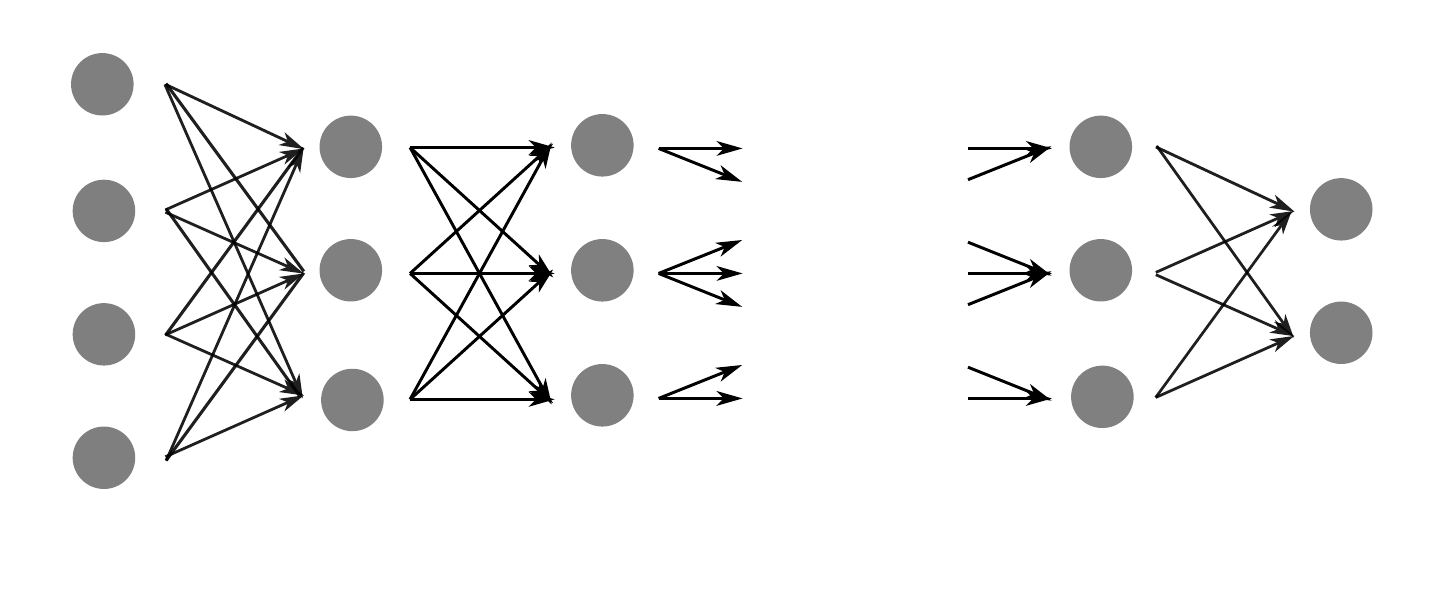}
			\put(5,1){$\textcolor{black!70}{x}$}
			\put(22.5,1){$\textcolor{black!70}{h_0}$}
			\put(40,1){$\textcolor{black!70}{h_1}$}
			\put(75,1){$\textcolor{black!70}{h_L}$}
			\put(89,1){$\textcolor{black!70}{\Phi(x)}$}
		\end{overpic}
		\caption{Example of a non-augmented ResNet $\Phi \in \textup{RN}_{\eps,\delta,\textup{N}}^k(\R^4,\R^{2})$ with $\nhid = 3$.}
		\label{fig:resnet_arch_nonaugmented}
	\end{subfigure}
    \hspace{12mm}
	\begin{subfigure}{0.41\textwidth}
		\centering
		\begin{overpic}[scale = 0.25,tics=10]{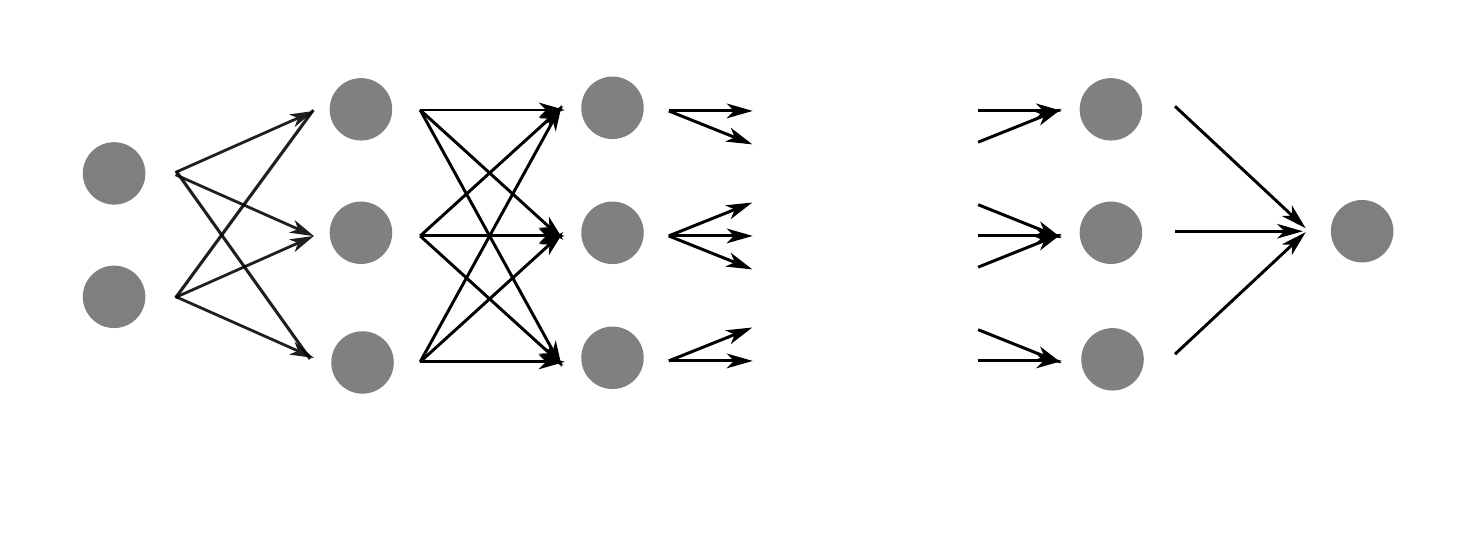}
			\put(6,1){$\textcolor{black!70}{x}$}
			\put(22,1){$\textcolor{black!70}{h_0}$}
			\put(40,1){$\textcolor{black!70}{h_1}$}
			\put(73,1){$\textcolor{black!70}{h_L}$}
			\put(88,1){$\textcolor{black!70}{\Phi(x)}$}
		\end{overpic}
		\caption{Example of an augmented ResNet $\Phi \in \textup{RN}_{\eps,\delta,\textup{A}}^k(\R^2,\R)$ with $\nhid = 3$.}
		\label{fig:resnet_arch_augmented}
	\end{subfigure}
	\caption{Classification of ResNet architectures depending on the input and the hidden dimension. Every node of the neural network is represented as a circle.}
	\label{fig:resnet_architectures}
\end{figure}

\subsection{Assumptions on Non-Augmented Architectures}
\label{sec:assumptions}

In this section, we state the assumptions used for the analysis of non-augmented ResNet architectures in Section~\ref{sec:resnet_gradient} and Section~\ref{sec:resnet_tunnel}. These conditions also appear in the study of ResNets related to neural ODEs (Section~\ref{sec:node}) and to feed-forward neural networks (Section~\ref{sec:mlp}). The assumptions are introduced below for reference; each of the upcoming theorems explicitly states which of the assumptions are required.

\subsubsection*{Assumptions on the Activation Functions}

Canonical ResNets $\Phi \in \textup{RN}^k_{\eps,\delta,\sigma}(\mathcal{X},\R^{\nout})$, $k \geq 0$, $\mathcal{X}\subset \R^{\nin}$ open, with residual functions as defined in~\eqref{eq:resnet_typical_f}, depend on the component-wise applied activation functions $\sigma_l \in C^k(\R,\R)$, $l \in \{1,\ldots,L\}$. The same dependence applies to multilayer perceptrons, which are introduced and analyzed in Section~\ref{sec:mlp}. In the following, we state three assumptions, all of which are satisfied by standard sigmoidal nonlinearities such as $\tanh$ and $\operatorname{sigmoid}(y) = (1+e^{-y})^{-1}$, but exclude activation functions of ReLU-type. In the literature, ReLU activation functions are often excluded if smoothness properties such as the continuity of the derivative is needed (cf.~\cite{MWSB24}). 

\begin{assumpA}[Lipschitz Continuous Activation]
	\label{ass:A_activation_lipschitz}
	The activation functions $\sigma_l \in C^0(\R,\R)$ are uniformly globally Lipschitz continuous with Lipschitz constant $K_\sigma>0$. In particular, if $\sigma_l \in C^1(\R,\R)$, Lipschitz continuity is equivalent to $\norm{\sigma_l'}_{\infty,\R} \leq K_\sigma$ for all $l \in \{1,\ldots,L\}$.
\end{assumpA}

\begin{assumpA}[Bounded Activation]
	\label{ass:A_activation_bounded}
	The activation functions $\sigma_l \in C^0(\R,\R)$ are uniformly bounded, i.e., there exists a constant $S > 0$ such that $\norm{\sigma_l}_{\infty,\R} \leq S$ for all $l \in \{1,\ldots,L\}$.
\end{assumpA}

\begin{assumpA}[Strictly Monotone Activation]
	\label{ass:A_activation_monotone}
	The activation functions $\sigma_l \in C^0(\R,\R)$ are strictly monotone for all $l \in \{1,\ldots,L\}$. In particular, if $\sigma_l \in C^1(\R,\R)$, strict monotonicity is equivalent to $ \abs{\sigma_l'(y)} > 0$ for every $y \in \R$ and all $l \in \{1,\ldots,L\}$. 
\end{assumpA}

\begin{remark}
	The three assumptions on the activation functions are standard in the analytical study of deep neural networks. The global Lipschitz continuity of Assumption~\ref{ass:A_activation_lipschitz} ensures stability of the input-output mapping. By the mean value theorem, the upper bound of $\norm{\sigma_l'}_{\infty,\R}$ defines a Lipschitz constant of the activation function. The boundedness of the activation function in Assumption~\ref{ass:A_activation_bounded} is relevant when estimating the distance between ResNets and neural ODEs or ResNets and MLPs. The monotonicity Assumption~\ref{ass:A_activation_monotone} guarantees non-degeneracy of the layer-wise Jacobians and is essential in our analysis of critical points.
\end{remark}

\subsubsection*{Assumptions on the Parameters}

In the following, we formulate two assumptions on the parameters appearing in the considered neural networks. For a parameter tuple $\theta = (A_1, \dots, A_k)$ consisting of matrices or vectors $A_j \in \R^{a_j \times b_j}$, $j\in \{1,\ldots,k\}$ we consider the standard Euclidean norm $\norm{\cdot}_2$, the max-norm $\norm{\cdot}_\infty$ and their induced matrix norms defined by
\begin{equation*}
    \norm{A_j}_q \coloneqq \sup_{x \in \R^{b_j} \setminus \{0\}} \frac{\norm{A_jx}_q}{\norm{x}_q}, \qquad q \in \{2,\infty\}.
\end{equation*}
To ensure that the upcoming uniform parameter bound applies to each component of the tuple $\theta$, we define the norm of $\theta$ as the maximum of the individual norms, i.e., $\norm{\theta}_q \coloneqq \max_{j \in \{1,\dots,k\}} \norm{A_j}_q$. Since different architectures involve different parameters, their notation and dimensions are specified in the respective theorems in which they are used. 

\begin{assumpB}[Bounded Parameters]
	\label{ass:B_weights_bounded}
	For a set of parameters $\Theta \subset \R^p$, all parameters are uniformly bounded in the max-norm or the Euclidean norm, i.e., there exist constants $\omega_\infty\geq 0$ or $\omega_2\geq 0$, such that it holds $\norm{\theta}_\infty \leq \omega_\infty$ or $\norm{\theta}_2 \leq \omega_2$ respectively for all $\theta \in \Theta$.
\end{assumpB}

\begin{remark}~
\begin{itemize}
\item	Since all norms on finite-dimensional spaces are equivalent, uniform boundedness in any norm is sufficient for Assumption~\ref{ass:B_weights_bounded}.
\item  For a fixed neural network with a finite number of parameters $\theta$, the uniform upper bound is trivially satisfied.
	
\item The assumption on bounded parameters, especially weight matrices, is standard in the analysis and implementation of deep neural networks. It helps prevent exploding gradients during training and ensures reasonable global Lipschitz constants of the considered networks~\cite{miyato_spectral_2018,salimans_weight_2016,schoenlieb2025}. In practice, many learning algorithms enforce these bounds implicitly or explicitly through regularization techniques, such as weight decay \cite{Goodfellowbook2016} or spectral regularization \cite{yoshida2017spectral}. 
\end{itemize}
\end{remark}

\begin{assumpB}[Full Rank Weight Matrices]
	\label{ass:B_weights_fullrank}
	For a set of parameters $\Theta \subset \R^p$, all weight matrices $W \in \R^{a \times b}$ contained in $\Theta$ have full rank, i.e., $\textup{rank}(W) = \min\{a,b\}$.
\end{assumpB}

\begin{remark}
	The full-rank assumption is generic, as by~\cite[Lemma~3.6]{kk2025}, the subset of singular weight matrices has Lebesgue measure zero. This assumption is especially relevant to prove the non-existence of critical points in certain non-augmented neural network architectures.
\end{remark}

\subsubsection*{Assumptions on the Input and Output Transformations}

We make the following assumption on the input and output transformations $\lambda$ and $\tilde{\lambda}$ of the considered non-augmented network architectures with $\nin \geq \nhid$. 

\begin{assumpC}[Non-Singular Input and Output Transformation] 
	\label{ass:C_input_output}
	The input and output transformations $\lambda \in C^1(\R^{\nin},\R^{\nhid})$ and $\tilde\lambda \in C^1(\R^{\nhid},\R^{\nout})$ with $\nin\geq\nhid$ fulfill the following:
	\begin{itemize}
		\item The Jacobian matrix $\partial_x \lambda(x)\in \R^{\nhid \times \nin}$ has full rank $\nhid$ for every $x \in \R^{\nin}$. 
		\item The Jacobian matrix $\partial_{y}{\tilde{\lambda}}(y)\in \R^{\nout \times \nhid}$ has full rank $\min\{\nhid,\nout\}$ for every $y \in \R^{\nhid}$.
	\end{itemize}
\end{assumpC}

\begin{remark} \label{rem:lambda_typical}
	Assumption~\ref{ass:C_input_output} is satisfied if the input and output transformations $\lambda$ and $\tilde{\lambda}$ have the typical form of the residual function as in~\eqref{eq:resnet_typical_f}, such that
	\begin{equation} \label{eq:lambda_typical}
		\lambda(x) = \widetilde{W}_0 \sigma_0(W_0 x + b_0) + \tilde{b}_0, \qquad  \tilde{\lambda}(y) = \widetilde{W}_{L+1} \sigma_{L+1}(W_{L+1} y + b_{L+1}) + \tilde{b}_{L+1},
	\end{equation}
	with a component-wise applied activation function $\sigma$ that satisfies Assumption~\ref{ass:A_activation_monotone} and weight matrices that fulfill Assumption~\ref{ass:B_weights_fullrank}, cf.~\cite[Lemma C.1]{kk2025}. 
\end{remark}

\subsection{Existence of Critical Points}
\label{sec:resnet_gradient}

To understand the existence of critical points in ResNet architectures, we derive in the following the input gradient of ResNets $\Phi \in \textup{RN}_{\eps,\delta}^1(\mathcal{X},\R)$, $\mathcal{X}\subset \R^{\nin}$ open. Due to Theorem~\ref{th:uniapprox}, we can without loss of generality restrict our analysis to single output components, or equivalently, scalar neural networks. The upcoming proposition applies to both non-augmented and augmented ResNets.  

\begin{proposition}[ResNet Input Gradient] \label{prop:resnet_gradient}
	The input gradient of a scalar ResNet $\Phi \in \textup{RN}^1_{\eps,\delta}(\mathcal{X},\R)$ at $x \in \mathcal{X}$, $\mathcal{X}\subset \R^{\nin}$ open, with iterative update rule~\eqref{eq:resnet_updaterule} is given by
	\begin{equation}
    \begin{aligned}
		\nabla_x \Phi(x) & = \Bigl[ \partial_{h_L}\tilde{\lambda}(h_L) \cdot (\eps \cdot \textup{Id}_{\nhid} + \delta \cdot \partial_{h_{L-1}} f_L(h_{L-1},\theta_L)) \cdots \Bigr. \\
		&\hspace{17.5mm} \cdots \Bigl. (\eps \cdot \textup{Id}_{\nhid} + \delta \cdot \partial_{h_{0}} f_1(h_{0},\theta_1))\cdot \partial_x\lambda(x)\Bigr]^\top \quad \in \R^{\nin} \label{eq:resnet_gradient}
	\end{aligned}
    \end{equation}
	with Jacobian matrix $\partial_{h_{l-1}}f_l(h_{l-1},\theta_l)$ of the residual function $f_l$ with respect to the layer $h_{l-1}$. 
	
	For canonical ResNets $\Phi \in \textup{RN}^1_{\eps,\delta,\sigma}(\mathcal{X},\R)$ with the explicit residual function in \eqref{eq:resnet_typical_f}, the Jacobian matrix $\partial_{h_{l-1}}f_l(h_{l-1},\theta_l)$ of the ResNet input gradient is given by
	\begin{equation*}
		\partial_{h_{l-1}}f_l(h_{l-1},\theta_l) =  \widetilde{W}_{l}\sigma'_{l}(a_{l})W_{l} \in \R^{\nhid \times \nhid}, \qquad l\in \{1,\ldots,L\},
	\end{equation*}
	with diagonal matrix $\sigma'_l(a_{l}) \coloneqq \textup{diag}(\sigma_l'([a_l]_1),\ldots, \sigma_l'([a_l]_{m_l}))$, where $a_l \coloneqq W_l h_{l-1} + b_l \in \R^{m_l}$.
\end{proposition}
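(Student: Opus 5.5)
The plan is to apply the multivariate chain rule to the composition $\Phi = \tilde{\lambda}\circ h_L \circ \lambda$. Here $h_L$ is itself the composition of the $L$ layer maps
\[
F_l:\R^{\nhid}\to\R^{\nhid},\qquad F_l(h) \coloneqq \eps h + \delta f_l(h,\theta_l).
\]
Since $\lambda$, $\tilde\lambda$ and each $f_l(\cdot,\theta_l)$ are $C^1$ by Definition~\ref{def:resnet} with $k\geq 1$, every map in this composition is differentiable. Hence the Jacobian of $\Phi$ at $x$ is the product of the individual Jacobians, each evaluated at the appropriate intermediate state $h_0=\lambda(x)$, $h_l = F_l(h_{l-1})$.

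First, I compute the Jacobian of a single layer map. By linearity of differentiation, $\partial_h F_l(h) = \eps\,\textup{Id}_{\nhid} + \delta\,\partial_h f_l(h,\theta_l)$. Composing gives
\[
\partial_x \Phi(x) = \partial_{h_L}\tilde\lambda(h_L)\, \partial F_L(h_{L-1})\cdots \partial F_1(h_0)\, \partial_x\lambda(x),
\]
which is a $1\times \nin$ row vector because $\nout=1$. A formal induction on $L$ makes this precise: the case $L=0$ is the ordinary chain rule, and the inductive step appends one more factor $\partial F_{L}$. The gradient $\nabla_x\Phi(x)$ is by definition the transpose of this row Jacobian, which yields \eqref{eq:resnet_gradient}.

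For canonical ResNets, I differentiate $f_l(h) = \widetilde W_l \sigma_l(W_l h + b_l) + \tilde b_l$. This is the composition of the affine map $h\mapsto W_l h+b_l$ (Jacobian $W_l$), the componentwise map $\sigma_l$, and the affine map $y\mapsto \widetilde W_l y + \tilde b_l$ (Jacobian $\widetilde W_l$). The componentwise map has a diagonal Jacobian, because its $i$-th output depends only on the $i$-th input, so its Jacobian is $\textup{diag}(\sigma_l'([a_l]_i))$. The chain rule then gives $\widetilde W_l \sigma_l'(a_l) W_l$, and the dimensions check as $\nhid\times m_l$ times $m_l\times m_l$ times $m_l\times \nhid$.

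Nothing here is genuinely hard. The only step needing care is bookkeeping: the evaluation points of the successive Jacobians, the ordering of the factors (the outermost layer on the left), and the transpose convention that converts the row Jacobian into the column gradient in $\R^{\nin}$.
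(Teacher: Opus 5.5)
Your proposal is correct and follows essentially the same route as the paper: the multivariate chain rule applied to $\tilde\lambda\circ h_L\circ\lambda$ with layer Jacobians $\eps\,\textup{Id}_{\nhid}+\delta\,\partial_{h_{l-1}}f_l(h_{l-1},\theta_l)$, a transpose to obtain the gradient, and for canonical ResNets the chain rule through the affine--componentwise--affine structure giving $\widetilde{W}_l\sigma_l'(a_l)W_l$. Your explicit induction on $L$ and the dimension check add detail to the paper's one-line chain-rule argument but do not change its substance.
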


\begin{proof}
	By the multi-dimensional chain rule applied to ResNets as defined in~\eqref{eq:resnet}, it holds that
	\begin{align*}
		\partial_{x} \Phi(x) & =\frac{\partial \Phi}{\partial h_L}\cdot\frac{\partial h_L}{\partial h_{L-1}} \cdots \frac{\partial h_1}{\partial h_0} \cdot \frac{\partial h_0}{\partial x}  \\
		& = \partial_{h_L}\tilde{\lambda}(h_L) \cdot (\eps \cdot \textup{Id}_{\nhid} + \delta \cdot \partial_{h_{L-1}} f_L(h_{L-1},\theta_L)) \cdots \\
		& \hspace{16.5mm} \cdots (\eps \cdot \textup{Id}_{\nhid} + \delta \cdot \partial_{h_{0}} f_1(h_{0},\theta_1))\cdot \partial_x\lambda(x) \quad \in \R^{1 \times \nin}.
	\end{align*}
	The first result follows by taking the transpose, as $\nabla_x \Phi(x) = [\partial_{x} \Phi(x)]^\top$. 
	
	For canonical ResNets $\Phi \in \textup{RN}^k_{\eps,\delta,\sigma}(\mathcal{X},\R)$ with the explicit residual function in~\eqref{eq:resnet_typical_f}, the Jacobian matrix $\partial_{h_{l-1}}f_l(h_{l-1},\theta_l)$ with respect to the layer $h_{l-1}$ is given by
	\begin{equation*}
		\partial_{h_{l-1}}f_l(h_{l-1},\theta_l) =   \partial_{h_{l-1}} \left( \widetilde{W}_l \sigma_l(W_l h_{l-1} + b_l) + \tilde{b}_l\right) =   \widetilde{W}_{l}\sigma'_{l}(a_{l})W_{l} \quad \in \R^{\nhid \times \nhid},
	\end{equation*}
	for $l\in \{1,\ldots,L\}$, with diagonal matrix $\sigma'_l(a_{l}) \coloneqq \textup{diag}(\sigma_l'([a_l]_1),\ldots, \sigma_l'([a_l]_{m_l}))$, as the activation function is applied component-wise to the pre-activated state $a_l \coloneqq W_l h_{l-1} + b_l \in \R^{m_l}$.
\end{proof}

To apply Theorem~\ref{th:uniapprox} to residual architectures, we formulate the criterion for non-vanishing gradients via the rank of the layer-wise Jacobians. The following lemma shows that the rank \emph{solely depends on the ratio} $\alpha \coloneqq \frac{\delta}{\eps}$ of the residual parameter $\delta$ and the skip parameter $\eps$ and not on their individual size.

\begin{lemma}[Layer-Wise Jacobians] \label{lem:resnet_Dl_fullrank}
	Given $l \in \{1,\ldots,L\}$, parameters $\eps, \delta >0$, and a residual function $f_l(\cdot, \theta_l)\in C^{1}(\R^{\nhid},\R^{\nhid})$ with $\theta_l \in \Theta_l \subset \R^{p_l}$, the layer-wise Jacobian at $h_{l-1} \in \R^{\nhid}$
	\begin{equation*}
		D_l \coloneqq \eps \cdot \textup{Id}_{\nhid} + \delta \cdot \partial_{h_{l-1}}f_l(h_{l-1},\theta_l) \in \R^{\nhid \times \nhid}
	\end{equation*}
	has full rank if and only if $ -\frac{1}{\alpha} = -\frac{\eps}{\delta}$ is not an eigenvalue of the Jacobian matrix $\partial_{h_{l-1}}f_l(h_{l-1},\theta_l)$.
\end{lemma}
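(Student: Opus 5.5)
The plan is to reduce the rank condition on $D_l$ to an eigenvalue condition by factoring out the strictly positive skip parameter $\eps$. Write $J_l \coloneqq \partial_{h_{l-1}}f_l(h_{l-1},\theta_l) \in \R^{\nhid\times\nhid}$. Since $\eps>0$, we have
\begin{equation*}
D_l = \eps\,\textup{Id}_{\nhid} + \delta J_l = \delta\left(J_l + \tfrac{\eps}{\delta}\,\textup{Id}_{\nhid}\right) = \delta\left(J_l - \left(-\tfrac{1}{\alpha}\right)\textup{Id}_{\nhid}\right),
\end{equation*}
using $\delta>0$ and $\alpha = \frac{\delta}{\eps}$. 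Because $D_l$ is square, it has full rank $\nhid$ if and only if $\det D_l \neq 0$.

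The multiplicative factor does not affect invertibility, since $\det D_l = \delta^{\nhid}\det\bigl(J_l + \frac{1}{\alpha}\textup{Id}_{\nhid}\bigr)$ and $\delta^{\nhid}\neq 0$. Hence $D_l$ is singular exactly when $\det\bigl(J_l - \mu\,\textup{Id}_{\nhid}\bigr)=0$ for $\mu = -\frac{1}{\alpha}$, which by definition means that $\mu$ is a root of the characteristic polynomial of $J_l$. In other words, it means that $-\frac{1}{\alpha}$ is an eigenvalue of $J_l$. Equivalently, there is a nonzero $v$ with $D_l v = 0$ if and only if $J_l v = -\frac{\eps}{\delta}v$, so the kernel of $D_l$ is precisely the eigenspace of $J_l$ for this eigenvalue. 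Taking the contrapositive gives the stated equivalence.

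There is no real obstacle here. The only points requiring care are these:
\begin{itemize}
\item Eigenvalues are understood over $\mathbb{C}$. Since $-\frac{1}{\alpha}$ is real, this coincides with the real-eigenvalue condition.
\item Positivity of $\eps$ and $\delta$ is used only to ensure that the division is legitimate.
\item The condition depends on $\eps,\delta$ only through $\alpha$, which is the remark the lemma is meant to emphasize.
\end{itemize}
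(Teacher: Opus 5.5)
Your proof is correct and matches the paper's argument, which rescales $D_l$ by $\frac{1}{\delta}$ and invokes the definition of an eigenvalue of $\partial_{h_{l-1}}f_l(h_{l-1},\theta_l)$. One cosmetic slip: the factor you pull out is the residual parameter $\delta$, not the skip parameter $\eps$ as your opening sentence says; $\eps>0$ is only needed so that $\alpha$ and $-\frac{\eps}{\delta}$ are well defined.
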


\begin{proof}
    The statement follows directly from the definition of the eigenvalues of  $\partial_{h_{l-1}}f_l(h_{l-1},\theta_l)$ after rescaling the matrix $D_l$ by $\frac{1}{\delta}$.
\end{proof}

In the case of non-augmented ResNets $\Phi \in \textup{RN}^1_{\eps,\delta,\textup{N}}(\mathcal{X},\R)$, $\mathcal{X}\subset \R^{\nin}$ open, Lemma~\ref{lem:resnet_Dl_fullrank} allows us to formulate  conditions on the transformations $\lambda$, $\tilde{\lambda}$ and the parameters $\eps$ and $\delta$, under which the gradient $\nabla_x\Phi(x)$ never vanishes for any $x \in \mathcal{X}$. 

\begin{proposition}[Non-Augmented ResNets without Critical Points]\label{prop:resnet_nonaugmented}
	Let $\Phi \in \textup{RN}^1_{\eps,\delta,\textup{N}}(\mathcal{X},\R)$, $\mathcal{X}\subset \R^{\nin}$ open, $\eps,\delta>0$, be a scalar non-augmented ResNet, which fulfills:
	\begin{itemize}
		\item $ -\frac{1}{\alpha} = -\frac{\eps}{\delta}$ is not an eigenvalue of the Jacobian matrix $\partial_{h_{l-1}}f_l(h_{l-1},\theta_l)$ for all $l \in \{1,\ldots,L\}$, $h_{l-1} \in \R^{\nhid}$ and parameters $\theta_l \in \Theta_l \subset \R^{p_l}$.
		\item The input and output transformations $\lambda$ and $\tilde{\lambda}$ fulfill Assumption~\ref{ass:C_input_output}.
	\end{itemize}
	Then $\Phi$ cannot have any critical points, i.e., $\nabla_x \Phi(x) \neq 0$ for all $x \in \mathcal{X}$.
\end{proposition}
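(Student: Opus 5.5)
The plan is to read off the non-vanishing of the gradient directly from the product formula of Proposition~\ref{prop:resnet_gradient}, by tracking ranks (equivalently, kernels of the transposed factors) through the chain. Fix $x \in \mathcal{X}$ and write
\begin{equation*}
	\partial_x \Phi(x) = \partial_{h_L}\tilde{\lambda}(h_L)\, D_L D_{L-1}\cdots D_1\, \partial_x\lambda(x) \in \R^{1\times \nin},
\end{equation*}
with $D_l$ the layer-wise Jacobians of Lemma~\ref{lem:resnet_Dl_fullrank}, evaluated along the trajectory $h_0=\lambda(x), h_1,\ldots,h_{L-1}$.

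First, by the first hypothesis, $-\frac{1}{\alpha}$ is never an eigenvalue of $\partial_{h_{l-1}}f_l(h_{l-1},\theta_l)$. Lemma~\ref{lem:resnet_Dl_fullrank} then gives that each square matrix $D_l\in\R^{\nhid\times\nhid}$ is invertible. Hence the product $D := D_L\cdots D_1$ is invertible.

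Second, the output transformation. Since $\nout=1$, Assumption~\ref{ass:C_input_output} says $\partial_{h_L}\tilde\lambda(h_L)\in\R^{1\times\nhid}$ has rank $\min\{\nhid,1\}=1$, so it is a nonzero row vector $v^\top$. Then $v^\top D$ is a nonzero row vector, because $D$ is invertible.

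Third, the input transformation. Since $\nin\ge\nhid$, Assumption~\ref{ass:C_input_output} gives that $J:=\partial_x\lambda(x)\in\R^{\nhid\times\nin}$ has full row rank $\nhid$. Therefore $J^\top$ is injective, i.e.\ $w^\top J = 0$ forces $w=0$. Applying this with $w^\top = v^\top D \neq 0$ yields $\partial_x\Phi(x)\neq 0$, and transposing gives $\nabla_x\Phi(x)\neq 0$ for every $x\in\mathcal{X}$.

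There is no real obstacle here; the only point requiring care is where the non-augmented hypothesis $\nin\ge\nhid$ enters. It is exactly what makes full rank of $\partial_x\lambda$ mean full row rank, and hence injectivity of its transpose. In the augmented case this step fails, which is why the argument does not apply there.
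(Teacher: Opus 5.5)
Your proof is correct and follows the paper's argument. Lemma~\ref{lem:resnet_Dl_fullrank} makes each $D_l$ invertible, and Assumption~\ref{ass:C_input_output} together with the decreasing dimensions $\nin\ge\nhid\ge 1$ then gives a gradient of rank $1$. You make explicit, via injectivity of $(\partial_x\lambda(x))^\top$, the rank step that the paper states in one line.
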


\begin{proof}
	By Lemma~\ref{lem:resnet_Dl_fullrank}, all layer-wise Jacobians $D_l \coloneqq \eps \cdot \textup{Id}_{\nhid} + \delta \cdot \partial_{h_{l-1}}f_l(h_{l-1},\theta_l)$ have full rank for all $h_{l-1} \in \R^{\nhid}$ and $\theta_l \in \Theta_l$.	
	As the ResNet $\Phi$ is scalar and non-augmented, it holds $\nin \geq \nhid \geq \nout = 1$, such that the dimensions in the matrix product~\eqref{eq:resnet_gradient} are monotonically decreasing. Together with Assumption~\ref{ass:C_input_output}, it follows that the gradient $\nabla_x \Phi(x)$ always has full rank~$1$, uniformly in $x \in \mathcal{X}$. 
\end{proof}

Depending on the ratio $\alpha \coloneqq \frac{\delta}{\eps}$ between the skip parameter $\eps$ and the residual parameter $\delta$, we analyze in Section~\ref{sec:node_criticalpoints} and Section~\ref{sec:mlp_criticalpoints} when the first assumption of Proposition~\ref{prop:resnet_nonaugmented} is fulfilled. We identify two parameter regimes, $0<\alpha \ll 1$ and $\alpha \gg 1$, for which non-augmented ResNets cannot have any critical points, as visualized in Figure~\ref{fig:alpha1}. In these parameter regimes, ResNets lack the universal approximation property as implied by Theorem~\ref{th:uniapprox}. Consequently, we extend the results of~\cite{kk2025}, regarding the non-existence of critical points in non-augmented neural ODEs and non-augmented MLPs, to ResNets. 

In intermediate parameter regimes of~$\alpha$, non-augmented ResNets can have the universal approximation property, as demonstrated in~\cite{Lin2018} for the standard case $\eps = \delta = 1$. In contrast, for augmented ResNet architectures, no equivalent statement to Proposition~\ref{prop:resnet_nonaugmented} about the non-existence of critical points exists. Instead, it follows analogously to~\cite[Theorem~3.18]{kk2025} that due to dimension augmentation, critical points can exist in augmented architectures even when all other assumptions of Proposition~\ref{prop:resnet_nonaugmented} are fulfilled. 

The following table provides an overview of the main results of this work for non-augmented ResNets in the parameter regimes $0<\alpha \ll 1$ and $\alpha \gg 1$. 

\begin{figure}[h]
    \vspace{4mm}
	\begin{tabular*}{\textwidth}{p{2cm}|p{6.1cm}|p{6.1cm}}
		& \hspace{11mm}\textbf{ResNets with $0<\alpha \ll 1$} & \hspace{12mm} \textbf{ResNets with $\alpha \gg 1$} \\[5pt]
		\hline && \\[-5pt]
		\textbf{Regime}
		&
		Close to neural ODEs, as introduced in Section~\ref{sec:node_model}
        \vspace{5pt} 
		&
		Close to FNNs, such as MLPs, introduced in Section~\ref{sec:mlp_model}
		\vspace{5pt} \\
		\hline &&  \\[-5pt]
		\textbf{Distance}
		&
		  Theorem~\ref{th:eulerdiscretization} and Corollary~\ref{cor:node_resnet_error}: small approximation error between ResNets and Neural ODEs
		  \vspace{5pt}
		& 
		Theorem~\ref{th:mlp_resnet_error} and Corollary~\ref{cor:mlp_resnet_error}: small approximation error between ResNets and FNNs / MLPs
		\vspace{5pt} \\
        \hline &&  \\[-5pt]
        \textbf{Critical Points}
		& 
		Theorem~\ref{th:node_criticalpoints}: no critical points for non-augmented neural ODEs 
		&
		Theorem~\ref{th:mlp_criticalpoints}: no critical points for non-augmented FNNs / MLPs \\[5mm]
        & Theorem~\ref{th:resnet_criticalpoints_alphasmall}:  no critical points for non-augmented ResNets if  $0 < \alpha < \frac{1}{K_f}$ with Lipschitz constant $K_f$ of the residual function  & 
        Theorem~\ref{th:resnet_criticalpoints_alphalarge}:  no critical points for non-augmented ResNets if  $\alpha > \frac{1}{k_f}$ with lower Lipschitz constant $k_f$ of the residual function\\
	\end{tabular*}
	\captionof{table}{Summary of the main results regarding the relationship between ResNets, neural ODEs, and FNNs, and the existence of critical points in the parameter regimes $0<\alpha \ll 1$ and $\alpha \gg 1$.} \label{tab:results}
\end{figure}

While the previous results establish the absence of critical points in specific regimes, we now turn to a global topological perspective. In the following, we quantify the expressivity of non-augmented ResNets by measuring their distance to function classes that are already known to lack universal approximation.

\subsection{Global Topological Restrictions}
\label{sec:resnet_tunnel}

In Section~\ref{sec:resnet_gradient}, we discussed that non-augmented ResNets cannot have critical points in the parameter regimes $0<\alpha \ll 1$ and $\alpha \gg 1$. Complementing that direct analysis, we characterize the expressivity of ResNets by comparing them to reference networks $\overline{\Phi}$ without critical points, such as non-augmented neural ODEs (cf.~Theorem~\ref{th:node_criticalpoints}) and non-augmented FNNs (cf.~Theorem~\ref{th:mlp_criticalpoints}).
In Section~\ref{sec:node_relationship}, we show that the distance between ResNets with $\eps = 1$ and neural ODEs scales linearly in the residual parameter~$\delta$. Similarly, in Section~\ref{sec:mlp_relationship}, we establish that for fixed $\delta>0$ and small $\eps>0$, the distance between ResNets and FNNs scales asymptotically linearly in the skip parameter~$\eps$.

We divide the upcoming analysis into two steps. First, we establish the topological restrictions of a continuously differentiable map $\overline{\Phi}$ lacking critical points. Second, we introduce a uniform distance bound to show that these topological restrictions extend to any continuous neural network $\Phi$ that closely approximates $\overline{\Phi}$.

\subsubsection{Topological Restrictions of Maps Without Critical Points}
\label{sec:topological_restrictions_phibar}

We first discuss the topological implications for a reference network $\overline{\Phi} \in C^1(\mathcal{X},\R)$, $\mathcal{X}\subset \R^{\nin}$ open, under the assumption that $\nabla_x\overline{\Phi}(x) \neq 0$ for all $x\in \mathcal{X}$. To study how this property restricts the expressivity of the network, we introduce level sets, sub-level sets, and super-level sets.

\begin{definition}[Level Sets]
    Given $f \in C^0(\mathcal{X},\R)$, $\mathcal{X}\subset \R^{\nin}$, we define the following sets for $c \in \R$:
    \begin{itemize}
        \item Level set $S_c(f) \coloneqq \{ x \in \mathcal{X} \mid f(x) = c\}$,
        \item Sub-level set $S_c^{\leq}(f) \coloneqq \{ x \in \mathcal{X} \mid f(x) \leq c\}$, strict sub-level set $S_c^{<}(f) \coloneqq \{ x \in \mathcal{X} \mid f(x) < c\}$,
        \item Super-level set $S_c^{\geq}(f) \coloneqq \{ x \in \mathcal{X} \mid f(x) \geq c\}$, strict super-level set $S_c^{>}(f) \coloneqq \{ x \in \mathcal{X} \mid f(x) > c\}$.
    \end{itemize}
\end{definition}

The absence of critical points has direct implications on the compactness of the closed sub- and super-level sets. Although the following lemma relates to classical Morse theory, we prove it directly using elementary calculus.

\begin{lemma}[Non-Compactness of Sub- and Super-Level Sets] \label{lem:levelsets_noncompact}
    Let $\overline{\Phi} \in C^1(\mathcal{X},\R)$, $\varnothing \neq \mathcal{X}\subset \R^{\nin}$ open, with $\nabla_x\overline{\Phi}(x) \neq 0$ for all $x\in \mathcal{X}$. Then for every $c \in  (\inf_{x \in \mathcal{X}} \overline{\Phi}(x),\sup_{x \in \mathcal{X}} \overline{\Phi}(x))$, the sub- and super-level sets $S_c^{\leq}(\overline{\Phi})$ and $S_c^{\geq}(\overline{\Phi})$ are non-compact in $\R^{\nin}$.
\end{lemma}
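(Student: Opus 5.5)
Argue by contradiction for the sub-level set; the super-level case follows by applying the same argument to $-\overline{\Phi}$, since $S_c^{\geq}(\overline{\Phi}) = S_{-c}^{\leq}(-\overline{\Phi})$ and $-c$ lies strictly between $\inf(-\overline{\Phi})$ and $\sup(-\overline{\Phi})$.

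So fix $c$ with $\inf_{\mathcal{X}}\overline{\Phi} < c < \sup_{\mathcal{X}}\overline{\Phi}$ and suppose $K \coloneqq S_c^{\leq}(\overline{\Phi})$ is compact in $\R^{\nin}$. Because $c > \inf\overline{\Phi}$, there is a point $x_0 \in \mathcal{X}$ with $\overline{\Phi}(x_0) < c$, so $K \neq \varnothing$. The continuous function $\overline{\Phi}$ attains a minimum on $K$ at some $x^\ast \in K$, and $\overline{\Phi}(x^\ast) \leq \overline{\Phi}(x_0) < c$. Every point of $\mathcal{X}$ outside $K$ has value $> c > \overline{\Phi}(x^\ast)$, so $x^\ast$ is in fact a global minimizer of $\overline{\Phi}$ on all of $\mathcal{X}$.

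Since $\mathcal{X}$ is open, $x^\ast$ is an interior point. The elementary first-order condition then applies: for each direction $e_j$, the one-variable function $t \mapsto \overline{\Phi}(x^\ast + t e_j)$ is defined for small $|t|$, differentiable, and minimized at $t=0$. Hence $\partial_j \overline{\Phi}(x^\ast) = 0$ for every $j$, so $\nabla_x\overline{\Phi}(x^\ast) = 0$. This contradicts the assumption that $\nabla_x \overline{\Phi}$ never vanishes, so $K$ cannot be compact.

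The main subtlety is where compactness is taken. Compactness in $\R^{\nin}$ guarantees that the minimizer lies in $K \subset \mathcal{X}$ and not on $\partial\mathcal{X}$, which is what makes the interior first-order condition available. The hypothesis $c > \inf\overline{\Phi}$ is needed only for nonemptiness of $K$ and to make the minimum strictly below $c$. The hypothesis $c < \sup\overline{\Phi}$ is needed only when passing to $-\overline{\Phi}$ for the super-level case. No Morse theory is required.
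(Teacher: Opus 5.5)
Your proof is correct and follows the same argument as the paper. You assume compactness, take a minimizer on $S_c^{\leq}(\overline{\Phi})$ via the extreme value theorem, observe its value is strictly below $c$ so it is an interior minimizer, and contradict $\nabla_x\overline{\Phi}\neq 0$. The differences are only cosmetic: you get interiority from global minimality plus openness of $\mathcal{X}$ where the paper uses openness of $S_c^{<}(\overline{\Phi})$, and you handle the super-level case by passing to $-\overline{\Phi}$ rather than repeating the argument with a maximum.
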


\begin{proof}
    As $\overline{\Phi} \in C^1(\mathcal{X},\R)$ has no critical points, it is non-constant around each $x\in \mathcal{X}$, such that the interval $ (\inf_{x \in \mathcal{X}} \overline{\Phi}(x),\sup_{x \in \mathcal{X}} \overline{\Phi}(x))$ is non-empty. Hence, for each $c \in  (\inf_{x \in \mathcal{X}} \overline{\Phi}(x),\sup_{x \in \mathcal{X}} \overline{\Phi}(x))$, the strict sub- and super-level sets $S_c^{<}(\overline{\Phi})$ and $S_c^{>}(\overline{\Phi})$ are non-empty. 
    Assume by contradiction that the sub-level set $S_c^{\leq}(\overline{\Phi})$ is compact in $\R^{\nin}$. By the extreme value theorem (cf.~\cite{Rudin1976}), the continuous function $\overline{\Phi}$ attains its minimum on $S_c^{\leq}(\overline{\Phi})$ at some point $x_{\min}$. Since the strict sub-level set $S_c^{<}(\overline{\Phi})$ is non-empty, there exists some $y$ with $\overline{\Phi}(y) < c$, which implies the minimum must satisfy $\overline{\Phi}(x_{\min}) \leq \overline{\Phi}(y) < c$.
    As $\overline{\Phi}$ is continuous and $\mathcal{X}$ is open, the strict sub-level set $S_c^{<}(\overline{\Phi})$ is an open set in $\R^{\nin}$ and is contained in the interior $\text{int}(S_c^{\leq}(\overline{\Phi}))$. Hence, the minimum is attained in the interior $x_{\min} \in \text{int}(S_c^{\leq}(\overline{\Phi}))$. A necessary condition for the existence of a local minimum $x_\text{min}$ in the interior of a domain for a continuously differentiable function is that $\nabla_x \overline{\Phi}(x_\text{min}) = 0$ \cite{Forster2017}. This contradicts the assumption $\nabla_x\overline{\Phi}(x) \neq 0$ for all $x \in \mathcal{X}$, thus, $S_c^{\leq}(\overline{\Phi})$ cannot be compact in $\R^{\nin}$.
    
    The statement for the super-level set $S_c^{\geq}(\overline{\Phi})$ follows analogously by replacing the considered minimum with the maximum $x_{\max}$ attained in the interior $\text{int}(S_c^{\geq}(\overline{\Phi}))$. 
\end{proof}

Lemma~\ref{lem:levelsets_noncompact} has direct implications on classification tasks, where decision boundaries are defined as level sets. In the following, we state the classical binary classification problem in an abstract form (cf.~\cite{boyd2004convex}). Throughout this section, we consider a compact domain $\varnothing \neq \mathcal{K}\subset \R^{\nin}$ as our reference frame, since practical classification datasets are bounded.

\begin{definition}[Binary Classification Problem]\label{def:binary_classification}
    For a compact set $\varnothing \neq \mathcal{K} \subset \R^{\nin}$ and $c_0,c_1\in\R$ with $c_0<c_1$ consider a given dataset
    \begin{equation*}
        A_{c_0,c_1} \coloneqq \left\{ (x_i,y_i)_{i = 1}^{N_\textup{data}} \mid x_i \in \mathcal{K}, y_i \in \{c_0,c_1\}\right\}.
    \end{equation*}
    A function $\Phi\in C^0(\mathcal{K},\R)$ successfully classifies $A_{c_0,c_1}$ if there exists $c^\ast\in (c_0,c_1)$, such that
    \begin{align*}
         &A_{c_0} \coloneqq \left\{ (x_i)_{i = 1}^{N_\textup{data}} \mid (x_i,c_0) \in A_{c_0,c_1}\right\}\subset S_{c^\ast}^{<}(\Phi), \\
         &A_{c_1} \coloneqq \left\{ (x_i)_{i = 1}^{N_\textup{data}} \mid (x_i,c_1) \in A_{c_0,c_1}\right\} \subset S_{c^\ast}^{>}(\Phi).
    \end{align*}
   The level set $S_{c^\ast}(\Phi)$ is called the decision boundary of $\Phi$. 
\end{definition}

For a given dataset, the goal of a classification problem is to classify correctly as many data points as possible. As we study in this section the topological restrictions induced by neural networks without critical points independently of a specific dataset, we focus on the topology of level sets, especially the decision boundary. For a reference map $\overline{\Phi}$ without critical points, the topology of its decision boundary is strictly constrained. We separate the one-dimensional case from the higher-dimensional case $\nin \geq 2$, as the boundary of a compact set $\mathcal{K}$ is disconnected in one dimension but can be connected in higher dimensions, leading to different topological restrictions on the decision boundary. 

\paragraph{One-Dimensional Case}

In the one-dimensional case $\nin = 1$, Lemma~\ref{lem:levelsets_noncompact} is equivalent to the fact that all one-dimensional maps without critical points are strictly monotone. Naturally, it follows that strictly monotone maps cannot satisfactorily approximate non-monotone target functions. This fact can also be formulated from the perspective of binary classifications. For a strictly increasing map $\overline{\Phi} \in C^1(\mathcal{X},\R)$ and any $c^\ast \in \overline{\Phi}(\mathcal{X})$, there exists a unique $x_{c^\ast} \in \mathcal{X}$ with $\overline{\Phi}(x_{c^\ast}) = c^\ast$, and the strict level sets take the form
\begin{equation}\label{eq:levelsets}
    S_{c^\ast}^{<}(\overline{\Phi}) = (-\infty, x_{c^\ast}) \cap \mathcal{X}, \qquad S_{c^\ast}(\overline{\Phi}) = \{x_{c^\ast}\}, \qquad  S_{c^\ast}^{>}(\overline{\Phi}) = (x_{c^\ast}, \infty) \cap \mathcal{X}.
\end{equation}
This implies that for such functions $\overline{\Phi}$, any dataset whose classes $A_{c_0}$ and $A_{c_1}$ cannot be separated by two disjoint intervals, cannot be successfully classified. Therefore, $\overline{\Phi}$ can only separate data that is split into two disjoint intervals. 

\paragraph{Higher-Dimensional Case}

If the boundary $\partial \mathcal{K}$ is connected, Lemma~\ref{lem:levelsets_noncompact} has direct implications for nested datasets, such as the two-dimensional circle dataset, where one class is entirely surrounded by another. To perfectly classify such a dataset, the decision boundary $S_{c^\ast}(\overline{\Phi})$ would need to form a closed curve (or a hypersphere for $\nin > 2$), contained entirely within the interior of the compact domain $\mathcal{K}$. However, this would create a strictly bounded, and therefore compact, sub- or super-level set $S_{c^\ast}^{\leq}(\overline{\Phi})$ or $S_{c^\ast}^{\geq}(\overline{\Phi})$, which directly contradicts Lemma~\ref{lem:levelsets_noncompact}. 

As we show in the upcoming Theorem~\ref{thm:constraints_nd}, this implies that any decision boundary attempting to separate a nested dataset is mathematically forced to intersect the domain boundary $\partial \mathcal{K}$. When minimizing the empirical classification error, the natural geometric compromise is to form a ``tunnel'', a continuous, narrow region of the inner class extending to the boundary $\partial \mathcal{K}$. As illustrated in the introduction in Figure~\ref{fig:intro}, this topology effectively shifts the required critical point outside of the observation domain $\mathcal{K}$. We will visualize the higher-dimensional ``tunnel effect'' in the following section, where we demonstrate that these topological limitations transfer to any network $\Phi$ in close proximity to $\overline{\Phi}$.

\subsubsection{Extension to Networks in Close Proximity}

In the upcoming analysis, we study the topological restrictions implied by a small distance in the sup-norm between a function $\Phi \in C^0(\mathcal{X},\R)$, $\mathcal{X}\subset \R^{\nin}$ open, and a second reference map $\overline{\Phi} \in C^1(\mathcal{X},\R)$ without critical points, i.e., $\nabla_x\overline{\Phi}(x) \neq 0$ for all $x\in \mathcal{X}$. We assume that there exists $\mu>0$, such that on a compact set $\mathcal{K}\subset \mathcal{X}$ it holds that
\begin{equation}\label{eq:distance_global}
	\norm{\Phi-\overline{\Phi}}_{\infty,\mathcal{K}} \leq \mu.
\end{equation}
The reference map $\overline{\Phi}$ may be a neural ODE, an MLP, or a ResNet in the restricted parameter regimes $0<\alpha \ll 1$ or $\alpha \gg 1$ identified in Section~\ref{sec:resnet_gradient}.
The estimate~\eqref{eq:distance_global} implies that the map $\Phi$ is trapped in a $\mu$-tube around $\overline{\Phi}$. As we will show, this forces the topological limitations of $\overline{\Phi}$ discussed in Section~\ref{sec:topological_restrictions_phibar} to carry over to $\Phi$. Again, we treat the one-dimensional and higher-dimensional cases separately, as the boundary $\partial \mathcal{K}$ of a compact set is disconnected in one dimension, but can be connected in higher dimensions.

\paragraph{One-Dimensional Case}

While every one-dimensional map $\overline{\Phi}$ without critical points is strictly monotone, the $\mu$-close map $\Phi$ is allowed to oscillate and possess critical points. However, as the distance between $\Phi$ and $\overline{\Phi}$ is bounded, the level sets of $\Phi$ have topological restrictions, too.

 \begin{theorem}[Topological Restrictions for $\nin = 1$] \label{thm:constraints_1d}
    Consider $\Phi \in C^0(\mathcal{X},\R)$ with $\mathcal{X}\subset \R$ open, and $\overline{\Phi} \in C^1(\mathcal{X},\R)$ with $\nabla_x\overline{\Phi}(x) \neq 0$ for all $x\in \mathcal{X}$. Let $\varnothing \neq \mathcal{K} = [k_0,k_1] \subset \mathcal{X}$ be a compact interval and $a = \overline{\Phi}(k_0)$, $b = \overline{\Phi}(k_1)$. If
    \begin{equation*}
        \norm{\Phi - \overline{\Phi}}_{\infty,\mathcal{K}} \leq \mu < \frac{\abs{b-a}}{2},
    \end{equation*}
    then for any $c \in (\min(a,b) + \mu, \max(a,b) - \mu)$, none of the sub- and super-level sets $S_c^{<}(\Phi)$ and $S_c^{>}(\Phi)$ can be entirely contained within the interior $\textup{int}(\mathcal{K}) = (k_0, k_1)$.
\end{theorem}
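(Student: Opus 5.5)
The plan is to use only the monotonicity of $\overline{\Phi}$ and evaluate $\Phi$ at the two endpoints $k_0$, $k_1$. Because $\mathcal{K}=[k_0,k_1]$ is connected and $\overline{\Phi}'$ is continuous and never zero on the interval, $\overline{\Phi}$ is strictly monotone on $\mathcal{K}$; the hypothesis $\mu<\abs{b-a}/2$ also forces $a\neq b$. Assume without loss of generality that $\overline{\Phi}$ is strictly increasing, so $a<b$. Otherwise we replace $\Phi,\overline{\Phi}$ by $-\Phi,-\overline{\Phi}$ and $c$ by $-c$; this swaps the roles of $S_c^{<}$ and $S_c^{>}$, and the statement is symmetric in these two sets. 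The interval $(a+\mu,b-\mu)$ is then non-empty, and we fix $c$ in it.

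The key step is to compare the endpoint values with $c$. Evaluating the uniform bound $\norm{\Phi-\overline{\Phi}}_{\infty,\mathcal{K}}\leq\mu$ at the endpoints gives
\begin{equation*}
\Phi(k_0)\leq a+\mu<c, \qquad \Phi(k_1)\geq b-\mu>c.
\end{equation*}
Therefore $k_0\in S_c^{<}(\Phi)$ and $k_1\in S_c^{>}(\Phi)$. Neither endpoint lies in $\textup{int}(\mathcal{K})=(k_0,k_1)$. Hence $S_c^{<}(\Phi)$ contains a point outside $(k_0,k_1)$, and so does $S_c^{>}(\Phi)$, so neither set is contained in the interior. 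In the decreasing case the roles of $k_0$ and $k_1$ are exchanged.

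I do not expect a real obstacle. The points needing care are the following.
\begin{itemize}
\item The level sets of $\Phi$ are taken over all of $\mathcal{X}$, not just over $\mathcal{K}$. Exhibiting boundary points of $\mathcal{K}$ inside them is still enough, since $\partial\mathcal{K}\subset\mathcal{X}$.
\item The case $\mu=0$ is included.
\item Strict monotonicity of $\overline{\Phi}$ on $\mathcal{K}$ is only used to order $a$ and $b$. The argument actually needs only $a\neq b$, and that already follows from the hypothesis on $\mu$.
\end{itemize}
If one wants the stronger interpretation that $\Phi$ separates the two classes only by a decision boundary reaching $\partial\mathcal{K}$, one can add a remark. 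By the intermediate value theorem applied to $\Phi$, which is continuous on $[k_0,k_1]$, the level set $S_c(\Phi)$ meets $(k_0,k_1)$. Meanwhile the two strict level sets extend to opposite ends of $\mathcal{K}$, which is the one-dimensional analogue of the tunnel effect.
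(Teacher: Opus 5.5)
Your proof is correct and is essentially the paper's argument. You reduce to the increasing case and evaluate the uniform bound at the endpoints to get $\Phi(k_0)\le a+\mu<c<b-\mu\le\Phi(k_1)$, so $k_0\in S_c^{<}(\Phi)$ and $k_1\in S_c^{>}(\Phi)$; your side remark that only $a\neq b$ is really used, which is already forced by $\mu<\abs{b-a}/2$, is a valid sharpening of that same proof.
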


\begin{proof}
    Without loss of generality, we assume that $\overline{\Phi}$ is strictly monotonically increasing, such that $a < b$. The bound for the distance between $\Phi$ and $\overline{\Phi}$ implies that
    \begin{equation*}
        \Phi(k_0) \leq \overline{\Phi}(k_0) + \mu = a + \mu, \qquad \text{and} \qquad 
        \Phi(k_1) \geq \overline{\Phi}(k_1) - \mu = b - \mu.
    \end{equation*}
    Since $\mu < \frac{\abs{b-a}}{2}$, the interval $(a+\mu, b-\mu)$ is non-empty, such that for any $c \in (a+\mu, b-\mu)$ it follows $\Phi(k_0) < c < \Phi(k_1)$. Consequently, any sub-level set $S_c^{<}(\Phi)$ must contain $k_0$ and any super-level set $S_c^{>}(\Phi)$ must contain $k_1$, such that the statement follows. By symmetry, the same argumentation holds for monotonically decreasing maps.
\end{proof}

Theorem~\ref{thm:constraints_1d} implies that $\Phi$ cannot correctly classify any dataset $A_{c_0,c_1}$ where the boundary points $k_0, k_1$ belong to the same class but surround a different inner class. We visualize these constraints in Figure~\ref{fig:approximation_1D} using a centered classification task. The data points are subdivided in two classes via the quadratic function
\begin{equation*}
    \Psi_z: \mathcal{K} = [k_0,k_1]\rightarrow \R, \quad \Psi_z(x) = (x-z)^2 \quad \text{with} \quad z\in (k_0,k_1)
\end{equation*}
in the following way: all data points in the region $S_{c^\ast}^{<}(\Psi_z)$ are assigned the label $c_0$, whereas all data points in the region $S_{c^\ast}^{>}(\Psi_z)$ are assigned the label $c_1$. The function $\Psi_z$ was also used in Theorem~\ref{th:uniapprox} to show that functions without critical points cannot have the universal approximation property. 

On the one hand, the map $\Phi$ can form local minima to better approximate the given data than the strictly monotone map $\overline{\Phi}$, as shown in Figure~\ref{fig:approximation_1D}. On the other hand, $\Phi$ is trapped in a $\mu$-tube around~$\overline{\Phi}$, such that its level sets $S_{c^\ast}^{<}(\Phi)$ and $S_{c^\ast}^{>}(\Phi)$ can be disconnected, but cannot be entirely contained in the interval $(k_0,k_1)$. Figure~\ref{fig:approximation_1D} shows that the classification of $\Phi$ improves over the classification of $\overline{\Phi}$, but Theorem~\ref{thm:constraints_1d} guarantees failure of a perfect classification. 

\begin{figure}[h]
	\centering
		\begin{overpic}[scale = 0.7,tics=10]
			{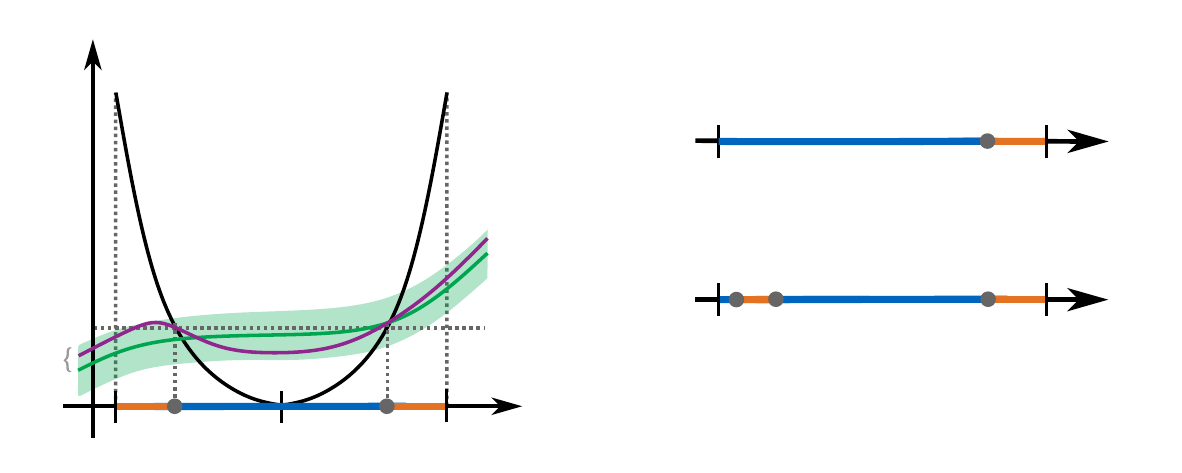}
				\put(46,4.8){$\R$}
				\put(7,38){$\R$}
                \put(95,14){$\R$}
				\put(95,27){$\R$}
				\put(23.1,2){$z$}
                \put(9,2){$k_0$}
                \put(37,2){$k_1$}
                \put(60,11){$k_0$}
                \put(88,11){$k_1$}
                \put(60,24){$k_0$}
                \put(88,24){$k_1$}
                \put(35.5,34){$\Psi_z(x)$}
				\put(3.1,9){\textcolor{gray}{$\mu$}}
                \put(4.8,12){\textcolor{gray}{$c^\ast$}}
				\put(42,17){\textcolor{Green}{$\overline{\Phi}(x)$}}
				\put(42,20){\textcolor{Plum}{$\Phi(x)$}}
                \put(67,32){\textcolor{Green}{Level Sets of $\overline{\Phi}$}}
                \put(67,18){\textcolor{Plum}{Level Sets of $\Phi$}}
                \put(56,4.8){\textcolor{tumblue}{Blue: $S_{c^\ast}^{<}$ \quad } \textcolor{gray}{Gray: $S_{c^\ast}$ \quad } \textcolor{tumorange}{Orange: $S_{c^\ast}^{>}$}}
			\end{overpic}
	\caption{Topological restrictions induced by the absence of critical points in the one-dimensional case: the neural network $\Phi$ has limited accuracy when classifying the data defined by the level sets of the quadratic function $\Psi_z$, as it is trapped in a $\mu$-tube around a strictly monotonically increasing reference function $\overline{\Phi}$.}
    \label{fig:approximation_1D}
\end{figure}

\paragraph{Higher-Dimensional Case}

In the higher-dimensional case $\nin \geq 2$, the boundary $\partial \mathcal{K}$ of a compact set $\mathcal{K}$ can be connected, which fundamentally changes the topological restrictions compared to the one-dimensional case. The following theorem proves that the decision boundary $S_{c^\ast}(\Phi)$ of a map $\Phi$ that is $\mu$-close to a reference map $\overline{\Phi}$ without critical points cannot be entirely contained within the interior of $\mathcal{K}$, but instead must intersect the boundary $\partial \mathcal{K}$.

\begin{theorem}[Topological Restrictions for $n_{\textup{in}} \geq 2$] \label{thm:constraints_nd}
    Consider $\Phi \in C^0(\mathcal{X},\R)$ with $\mathcal{X}\subset \R^{n_{\textup{in}}}$ open, $n_{\textup{in}} \geq 2$, and $\overline{\Phi} \in C^1(\mathcal{X},\R)$ with $\nabla_x\overline{\Phi}(x) \neq 0$ for all $x\in \mathcal{X}$. Let $\varnothing \neq \mathcal{K} \subset \mathcal{X}$ be a compact set with a connected boundary $\partial \mathcal{K}$, and let $[a,b] \subset \overline{\Phi}(\mathcal{K})$ be a non-empty interval. If 
    \begin{equation*}
        \norm{\Phi - \overline{\Phi}}_{\infty,\mathcal{K}}  \leq \mu < \frac{b-a}{2},
    \end{equation*}
    then for all $c \in (a+\mu, b-\mu)$, the level set $S_c(\Phi)$ intersects the boundary $\partial \mathcal{K}$, i.e., $S_c(\Phi) \cap \partial \mathcal{K} \neq \varnothing$.
\end{theorem}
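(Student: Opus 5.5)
The plan is to reduce the claim to the intermediate value theorem for continuous functions on the connected set $\partial\mathcal{K}$. For this we first show that $\overline{\Phi}|_{\partial\mathcal{K}}$ already covers $[a,b]$ at its extremes, and then transfer this to $\Phi$ using the $\mu$-tube.

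\emph{Step 1 (extrema of $\overline{\Phi}$ lie on the boundary).} Since $\mathcal{K}$ is compact and nonempty, $\overline{\Phi}$ attains its minimum and maximum on $\mathcal{K}$ at points $x_{\min}, x_{\max} \in \mathcal{K}$. Suppose $x_{\min} \in \textup{int}(\mathcal{K})$. Then $x_{\min}$ is a local minimum of the $C^1$ map $\overline{\Phi}$ at an interior point of the open set $\mathcal{X}$. Hence $\nabla_x\overline{\Phi}(x_{\min})=0$, which contradicts the assumption; this is the same argument as in Lemma~\ref{lem:levelsets_noncompact}. So $x_{\min}\in\partial\mathcal{K}$, and likewise $x_{\max}\in\partial\mathcal{K}$. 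Here we use that $\mathcal{K}$ is closed, so $\partial\mathcal{K}\subset\mathcal{K}$, and that $\mathcal{K} = \textup{int}(\mathcal{K})\cup\partial\mathcal{K}$. Because $[a,b]\subset\overline{\Phi}(\mathcal{K})$, we get
\[
\overline{\Phi}(x_{\min}) = \min_{\mathcal{K}}\overline{\Phi} \le a
\quad\text{and}\quad
\overline{\Phi}(x_{\max}) = \max_{\mathcal{K}}\overline{\Phi} \ge b .
\]

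\emph{Step 2 (transfer to $\Phi$).} From $\norm{\Phi-\overline{\Phi}}_{\infty,\mathcal{K}}\le\mu$ and $x_{\min},x_{\max}\in\mathcal{K}$ we obtain
\[
\Phi(x_{\min}) \le a+\mu < c
\quad\text{and}\quad
\Phi(x_{\max}) \ge b-\mu > c
\]
for every $c\in(a+\mu,b-\mu)$. This interval is nonempty because $\mu<\frac{b-a}{2}$.

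\emph{Step 3 (connectedness).} The restriction $\Phi|_{\partial\mathcal{K}}$ is continuous, and $\partial\mathcal{K}$ is connected by assumption. Therefore $\Phi(\partial\mathcal{K})$ is a connected subset of $\R$, i.e.\ an interval. By Step 2 this interval contains a value below $c$ and a value above $c$, so it contains $c$. Hence there is $x\in\partial\mathcal{K}$ with $\Phi(x)=c$, that is, $S_c(\Phi)\cap\partial\mathcal{K}\neq\varnothing$.

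The argument is essentially routine. The only point needing care is Step 1, namely that interior extrema force critical points. This requires $\textup{int}(\mathcal{K})\subset\mathcal{X}$ to be open in $\R^{\nin}$, which holds since $\mathcal{K}\subset\mathcal{X}$ and $\mathcal{X}$ is open. It also requires that the extrema are taken over $\mathcal{K}$ rather than over $\partial\mathcal{K}$ alone, so that the hypothesis $[a,b]\subset\overline{\Phi}(\mathcal{K})$ is what yields the bounds $\le a$ and $\ge b$. The condition $\nin\ge 2$ is not used directly; it enters only through the hypothesis that $\partial\mathcal{K}$ is connected, which fails for intervals when $\nin=1$.
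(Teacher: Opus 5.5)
Your proof is correct and follows the same route as the paper. Extrema of $\overline{\Phi}$ on $\mathcal{K}$ cannot lie in $\textup{int}(\mathcal{K})$ since they would be critical points, the $\mu$-tube transfers the boundary values to $\Phi$, and the intermediate value property on the connected set $\partial\mathcal{K}$ finishes the argument. The paper phrases the first step via the sub-level sets $S_{a+\nu}^{\leq}(\overline{\Phi})$ and $S_{b-\nu}^{\geq}(\overline{\Phi})$ meeting $\partial\mathcal{K}$ and then lets $\nu\to 0$; your direct use of the global minimum and maximum avoids the auxiliary $\nu$ and even covers the closed interval $[a+\mu,b-\mu]$.
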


\begin{proof}
    Let $0 < \nu < \frac{b-a}{2}-\mu$ be arbitrary. 
    Since $[a,b] \subset \overline{\Phi}(\mathcal{K})$, the sub-level set $\mathcal{S} \coloneqq S_{a+\nu}^{\leq}(\overline{\Phi})$ has a non-empty intersection with the domain $\mathcal{K}$. With the assumptions on $\overline{\Phi}$ and $\Phi$ and $a+\nu\in (\inf_{x \in \mathcal{X}} \overline{\Phi}(x),\sup_{x \in \mathcal{X}} \overline{\Phi}(x))$, it follows from Lemma~\ref{lem:levelsets_noncompact} that $\mathcal{S}$ is non-compact in $\R^{n_{\textup{in}}}$. Furthermore, by the continuity of $\overline{\Phi}$, the set $\mathcal{S} = \overline{\Phi}^{-1}((-\infty,a+\nu])$ is closed in $\mathcal{X}$.
    
    We first show that $\mathcal{S}$ must intersect the boundary of $\mathcal{K}$, as visualized in Figure~\ref{fig:levelsets}. Assume by contradiction that $\mathcal{S} \cap \partial \mathcal{K} = \varnothing$. Since there exists $x^\ast \in \mathcal{K}$ with $\overline{\Phi}(x^\ast) = a$, the intersection $\mathcal{S} \cap \mathcal{K}$ is non-empty. If this intersection does not intersect the boundary $\partial \mathcal{K}$, then $\mathcal{S} \cap \mathcal{K}$ is entirely contained in the interior $\textup{int}(\mathcal{K})$. Because $\mathcal{S}$ is closed and $\mathcal{K}$ is compact, their intersection $\mathcal{S} \cap \mathcal{K}$ is compact, too. The continuous function $\overline{\Phi}$ must then attain its global minimum on the compact set $\mathcal{S} \cap \mathcal{K}$ at some point $x_{\min}$. As $\mathcal{S} \cap \mathcal{K} \subset \textup{int}(\mathcal{K})$,  $x_{\min}$ would be an interior point of $\mathcal{K}$.
    A necessary condition for the existence of the minimum $x_{\min}$ of the continuously differentiable function $\overline{\Phi}: \textup{int}(\mathcal{K}) \rightarrow \R$ is that $\nabla_x\overline{\Phi}(x_{\min}) = 0$, as $\textup{int}(\mathcal{K}) \subset \R^{n_{\textup{in}}}$ is an open set \cite{Forster2017}. This contradicts the assumption that $\overline{\Phi}$ has no critical points, thus it follows that $\mathcal{S} \cap \partial \mathcal{K} \neq \varnothing$.
    By an analogous argument applied to the super-level set $S_{b-\nu}^{\geq}(\overline{\Phi})$, we conclude that:
    \begin{equation} \label{eq:proof_levelsets_1}
        S_{a+\nu}^{\leq}(\overline{\Phi}) \cap \partial \mathcal{K} \neq \varnothing \qquad \text{and} \qquad S_{b-\nu}^{\geq}(\overline{\Phi}) \cap \partial \mathcal{K} \neq \varnothing.
    \end{equation}
    The uniform estimate $\norm{\Phi - \overline{\Phi}}_{\infty,\mathcal{K}} \leq \mu$ implies the following set inclusions:
    \begin{equation}\label{eq:proof_levelsets_2}
        S_{a+\nu}^{\leq}(\overline{\Phi}) \subset S_{a+\nu+\mu}^{\leq}(\Phi) \qquad \text{and} \qquad S_{b-\nu}^{\geq}(\overline{\Phi}) \subset S_{b-\nu-\mu}^{\geq}(\Phi).
    \end{equation}
    Combining the properties \eqref{eq:proof_levelsets_1} and \eqref{eq:proof_levelsets_2} yields:
    \begin{equation}\label{eq:proof_levelsets_3}
        S_{a+\nu+\mu}^{\leq}(\Phi) \cap \partial \mathcal{K} \neq \varnothing \qquad \text{and} \qquad S_{b-\nu-\mu}^{\geq}(\Phi) \cap \partial \mathcal{K} \neq \varnothing.
    \end{equation}
    The setting is visualized in Figure~\ref{fig:levelsets}. The intersections~\eqref{eq:proof_levelsets_3} imply the existence of a point $x_a \in \partial \mathcal{K}$ with $\Phi(x_a) \leq a+\nu+\mu$ and a point $x_b \in \partial \mathcal{K}$ with $\Phi(x_b) \geq b-\nu-\mu$. Since $\Phi$ is continuous and the boundary $\partial \mathcal{K}$ is a connected subset of $\R^{n_{\textup{in}}}$, the image $\Phi(\partial \mathcal{K})$ is an interval in $\R$ by the intermediate value property~\cite[Corollary 22.3]{Ross2013}. 
    Given the assumption on $\nu$, we have $a+\nu+\mu < b-\nu-\mu$, which implies:
    \begin{equation*}
        [a+\nu+\mu, b-\nu-\mu] \subset [\Phi(x_a), \Phi(x_b)] \subset \Phi(\partial \mathcal{K}).
    \end{equation*}
    Since $\nu > 0$ was arbitrary, it follows that for every $c \in (a+\mu, b-\mu)$, there exists some $x_c \in \partial \mathcal{K}$ such that $\Phi(x_c) = c$. Therefore, the level set $S_c(\Phi)$ necessarily intersects the boundary $\partial \mathcal{K}$.
\end{proof}

\begin{remark}
    The distinction between the one-dimensional and higher-dimensional case lies in the topology of the boundary $\partial \mathcal{K}$ of the compact domain $\mathcal{K}$. In one dimension, the boundary of a compact interval consists of two disjoint points and is therefore never connected. In higher dimensions ($n_{\textup{in}} \geq 2$), the boundary of a compact set is for example connected for all convex sets $\mathcal{K}$.
\end{remark}

\begin{figure}
\centering
\hspace{2mm}
\begin{minipage}[t]{0.45\textwidth}

\scalebox{0.78}{	\begin{tikzpicture}[
		lbl/.style={font=\normalsize},
		ptr/.style={->, thin, black, >=stealth}
		]
		
			\definecolor{cDarkOrange}{RGB}{245,195,145}
		\definecolor{cLightOrange}{RGB}{252,228,205}
		\definecolor{cGray}{RGB}{0,0,0}
		\definecolor{cLightBlue}{RGB}{182,211,237}
		\definecolor{cDarkBlue}{RGB}{153,196,237}
		\definecolor{cLineOrange}{RGB}{195,110,40}
		\definecolor{cLineBlue}{RGB}{70,120,190}
		\definecolor{cLabelOrange}{RGB}{130,77,27}
		\definecolor{cLabelBlue}{RGB}{80,130,200}
		
		\fill[cDarkOrange] (-3.9,-3.9) rectangle (3.9,3.9);
		
		\fill[cLightOrange]
		(-0.90, 3.9)
		.. controls (-0.90, 2.0) and (-3.50, 1.5) .. (-3.50, 0)
		arc (180:360:3.50)
		.. controls (3.50, 1.5) and (0.90, 2.0) .. (0.90, 3.9)
		-- cycle;
		\fill[pattern=north west lines, pattern color=cLineOrange!70]
		(-0.90, 3.9)
		.. controls (-0.90, 2.0) and (-3.50, 1.5) .. (-3.50, 0)
		arc (180:360:3.50)
		.. controls (3.50, 1.5) and (0.90, 2.0) .. (0.90, 3.9)
		-- cycle;
		
		\fill[white]
		(-0.86, 3.9)
		.. controls (-0.86, 2.0) and (-3.20, 1.5) .. (-3.20, 0)
		arc (180:360:3.20)
		.. controls (3.20, 1.5) and (0.86, 2.0) .. (0.86, 3.9)
		-- cycle;
		
		\draw[cLineOrange!80!white, thick]
		(-0.86, 3.9)
		.. controls (-0.86, 2.0) and (-3.20, 1.5) .. (-3.20, 0);
		\draw[cLineOrange!80!white, thick]
		(-3.20, 0) arc (180:360:3.20);
		\draw[cLineOrange!80!white, thick]
		(3.20, 0)
		.. controls (3.20, 1.5) and (0.86, 2.0) .. (0.86, 3.9);
		
		\fill[white]
		(-0.39, 3.9)
		.. controls (-0.39, 2.0) and (-2.70, 1.5) .. (-2.70, 0)
		arc (180:360:2.70)
		.. controls (2.70, 1.5) and (0.39, 2.0) .. (0.39, 3.9)
		-- cycle;
		
			\draw[black, semithick]
		(-0.6, 3.9) 
		.. controls (-0.6, 2.0) and (-3., 1.5) .. (-3., 0); 
		
		\draw[black, semithick]
		(-3., 0)
		.. controls (-3., -1.0) and (-1.85, -1.40) .. (-1.70, -2.00)
		.. controls (-1.60, -2.40) and (-1.30, -2.55) .. (-1.00, -2.67)
		arc (245:360:2.8);
		
		\draw[black, semithick]
		(2.98, -.15) 
		.. controls (3., 1.5) and (0.5, 2.0) .. (0.5, 3.9);
		
		
		\fill[cLightBlue]
		(-0.34, 3.9)
		.. controls (-0.34, 2.0) and (-2.40, 1.5) .. (-2.40, 0)
		arc (180:360:2.40)
		.. controls (2.40, 1.5) and (0.34, 2.0) .. (0.34, 3.9)
		-- cycle;
		\fill[pattern=north west lines, pattern color=cDarkBlue!20]
		(-0.34, 3.9)
		.. controls (-0.34, 2.0) and (-2.40, 1.5) .. (-2.40, 0)
		arc (180:360:2.40)
		.. controls (2.40, 1.5) and (0.34, 2.0) .. (0.34, 3.9)
		-- cycle;
		
		\draw[cLineBlue!80!white, thick]
		(-0.34, 3.9)
		.. controls (-0.34, 2.0) and (-2.40, 1.5) .. (-2.40, 0);
		\draw[cLineBlue!80!white, thick]
		(-2.40, 0) arc (180:360:2.40);
		\draw[cLineBlue!80!white, thick]
		(2.40, 0)
		.. controls (2.40, 1.5) and (0.34, 2.0) .. (0.34, 3.9);
		
		\fill[cDarkBlue]
		(-0.30, 3.9)
		.. controls (-0.30, 2.0) and (-2.10, 1.5) .. (-2.10, 0)
		arc (180:360:2.10)
		.. controls (2.10, 1.5) and (0.30, 2.0) .. (0.30, 3.9)
		-- cycle;
		
		\draw[black, very thick] (-3.9,-3.9) rectangle (3.9,3.9);
		
		\draw[black, semithick, rotate around={72:(-2.75, 0.50)}] (-2.75, 0.30) ellipse (0.32 and 0.07);
		\draw[black, semithick, rotate around={95:(-2.68, -0.25)}] (-2.68, -0.25) ellipse (0.18 and 0.06);
		\draw[black, semithick, rotate around={108:(-2.55, -0.75)}] (-2.55, -0.75) ellipse (0.10 and 0.05);

		
		\node[anchor=north west, font=\normalsize, black]
		at (-3.4, 3.4) {$S_{b-\nu}^{\geq}(\overline{\Phi})$};
		
		\node[anchor=north west, font=\normalsize, black] (loLabel)
		at (-3.9, 2.4) {$S_{b-\nu-\mu}^{\geq}(\Phi)$};
		\draw[ptr] (loLabel.south) -- (-2.8, 1.1);
		
		\node[anchor=west, font=\normalsize, black]
		at (-0.6, -2.7) {$S_c(\Phi)$};
		
		\node[anchor=center, font=\normalsize, black] (sbLabel)
		at (0, -1.) {$S_{a+\nu+\mu}^{\leq}(\Phi)$};
		\draw[ptr] (sbLabel.south) -- (1.5, -1.6);
		
		\node[anchor=center, font=\normalsize, black]
		at (0, -.) {$\mathcal{S} := S_{a+\nu}^{\leq}(\overline{\Phi})$};
		
		\node[anchor=west, font=\normalsize, black]
		at (3.95, -2.7) {$\partial\mathcal{K}$};
		
	\end{tikzpicture}	}
    \captionof{figure}{Geometry of the level sets in the proof of Theorem~\ref{thm:constraints_nd}. As the reference map $\overline{\Phi}$ has no critical points, the level sets $S_{a+\nu}^{\leq}(\overline{\Phi})$ and $S_{b-\nu}^{\geq}(\overline{\Phi})$ must intersect the connected boundary $\partial \mathcal{K}$. As the distance between $\Phi$ and $\overline{\Phi}$ is bounded, also the level sets $S_{a+\nu+\mu}^{\leq}(\Phi)$ and $S_{b-\nu-\mu}^{\geq}(\Phi)$ have to intersect $\partial \mathcal{K}$. Any decision boundary $S_c(\Phi)$ forced in between $S_{a+\nu+\mu}^{\leq}(\Phi)$ and $S_{b-\nu-\mu}^{\geq}(\Phi)$ has to intersect $\partial \mathcal{K}$, too.}
\label{fig:levelsets}
\end{minipage}
\hspace{4mm}
\begin{minipage}[t]{0.45\textwidth}
    \centering
\scalebox{0.78}{	\begin{tikzpicture}[
		lbl/.style={font=\normalsize},
		ptr/.style={->, thin, black, >=stealth}
		]
		
		\draw[black,dashed, thick]
		(-0.425, 3.9)
		.. controls (-0.425, 2.0) and (-2.95, 1.5) .. (-2.95, 0);
		\draw[black,dashed, thick]
		(-2.95, 0) arc (180:360:2.95);
		\draw[black,dashed, thick]
		(2.95, 0)
		.. controls (2.95, 1.5) and (0.425, 2.0) .. (0.425, 3.9);
		
			\draw[black, semithick]
	(-0.6, 3.9) 
	.. controls (-0.6, 2.0) and (-3., 1.5) .. (-3., 0); 
	
	\draw[black, semithick]
	(-3., 0)
	.. controls (-3., -1.0) and (-1.85, -1.40) .. (-1.70, -2.00)
	.. controls (-1.60, -2.40) and (-1.30, -2.55) .. (-1.00, -2.67)
	arc (245:360:2.8);
	
	\draw[black, semithick]
	(2.98, -.15) 
	.. controls (3., 1.5) and (0.5, 2.0) .. (0.5, 3.9);
		
		\draw[black, semithick, rotate around={72:(-2.75, 0.50)}] (-2.75, 0.30) ellipse (0.32 and 0.07);
		\draw[black, semithick, rotate around={95:(-2.68, -0.25)}] (-2.68, -0.25) ellipse (0.18 and 0.06);
		\draw[black, semithick, rotate around={108:(-2.55, -0.75)}] (-2.55, -0.75) ellipse (0.10 and 0.05);
		
		\draw[black, very thick] (-3.9,-3.9) rectangle (3.9,3.9);
		
		\draw[black, thick] (-2.2, -3.4) -- (-1.2, -3.4);
		\node[right] at (-1.1, -3.4) {$S_c(\Phi)$};
		
		\draw[black, dashed, thick] (0.6, -3.4) -- (1.6, -3.4);
		\node[right] at (1.7, -3.4) {$S_c(\overline{\Phi})$};
		
		\node[anchor=west, font=\normalsize, black]
		at (3.95, -2.7) {$\partial\mathcal{K}$};
		
	\end{tikzpicture}}
    \captionof{figure}{Topological failure on the circle dataset. The decision boundary $S_c(\overline{\Phi})$ has to intersect the boundary $\partial \mathcal{K}$, as the reference map $\overline{\Phi}$ has no critical points. Even though $\Phi$ can have critical points in $\mathcal{K}$, the fact that $\Phi$ is $\mu$-close to $\overline{\Phi}$ means $S_c(\Phi)$ must still intersect $\partial \mathcal{K}$. To optimize the classification under these constraints, the network forms a ``tunnel''.}
    \label{fig:circledataset}
\end{minipage}
\hspace{2mm}
\end{figure}

In the context of classification problems (cf.~Definition~\ref{def:binary_classification}), Theorem~\ref{thm:constraints_nd} reveals a severe structural limitation for nested datasets. A prototypical example is the two-dimensional circle dataset (see Figure~\ref{fig:circledataset}), where an inner cluster of one class is completely surrounded by points of the other class. As discussed in Section~\ref{sec:topological_restrictions_phibar}, to perfectly classify this dataset, an ideal decision boundary would need to form a closed curve contained entirely within the interior of the compact domain $\mathcal{K}$. 

The classification problem defined by the circle dataset can be generalized to any dimension $n_{\textup{in}} \geq 2$ using a quadratic function
\begin{equation*}
    \Psi_z: \mathcal{K} \rightarrow \R, \quad \Psi_z(x) = \sum_{j = 1}^{\nin}(x_j-z_j)^2 \quad \text{with} \quad z\in \textup{int}(\mathcal{K}).
\end{equation*}
For an appropriate $c^\ast \in \R$, all data points in the sub-level set $S_{c^\ast}^{<}(\Psi_z)$ (an $\nin$-dimensional ball) are assigned the label $c_0$, while the super-level set $S_{c^\ast}^{>}(\Psi_z)$ defines the class $c_1$ surrounding the class $c_0$.

Theorem~\ref{thm:constraints_nd} implies that such datasets cannot be perfectly classified by a map $\Phi$ that is $\mu$-close to a reference map $\overline{\Phi}$ without critical points. Because $\overline{\Phi}$ lacks critical points, its decision boundary $S_{c^\ast}(\overline{\Phi})$ is forced to intersect the boundary $\partial \mathcal{K}$. Since the map $\Phi$ is trapped in a $\mu$-tube around $\overline{\Phi}$, its decision boundary $S_{c^\ast}(\Phi)$ inherits this property and is also forced to intersect $\partial \mathcal{K}$, as visualized in Figure~\ref{fig:circledataset}. In contrast to $\overline{\Phi}$, the map $\Phi$ can in principle have critical points in the interior of the domain~$\mathcal{K}$. Nevertheless, the map $\Phi$ cannot isolate the inner class $c_0$ with an $\nin$-dimensional hypersphere as a decision boundary. Instead, a natural geometric compromise occurs: the network forms a ``tunnel'', a continuous, narrow region of class $c_0$ extending to the boundary $\partial \mathcal{K}$. Effectively, this topology shifts the required critical point outside the observation domain $\mathcal{K}$, allowing the decision boundary $S_{c^\ast}(\Phi)$ to reach $\partial \mathcal{K}$ while still minimizing the empirical classification error.

\section{Case $0<\alpha \ll 1$: Close to Neural ODEs}
\label{sec:node}

In Section~\ref{sec:node_model}, we introduce neural ordinary differential equations (neural ODEs). Next, we motivate the study of neural ODEs in Section~\ref{sec:node_relationship} by showing that neural ODEs and ResNets with sufficiently small $\alpha \coloneqq \frac{\delta}{\eps}$ can be interpreted as discrete and continuous counterparts of the same underlying dynamics. Additionally, we calculate the approximation error between the neural ODE and ResNet dynamics arising from an explicit Euler discretization. Finally, we show in Section~\ref{sec:node_criticalpoints} that, in analogy to non-augmented neural ODEs, critical points cannot exist in non-augmented ResNets if $0 < \alpha \ll 1$.

\subsection{Neural ODEs}
\label{sec:node_model}

In this section, we introduce neural ODEs, which can be interpreted as the infinite-depth limit of ResNets. In analogy to ResNets, we include two additional transformations before and after the initial value problem, and we distinguish between non-augmented and augmented architectures.
We consider neural ODEs based on the solution $h:[0,T] \rightarrow \R^{\nhid}$ of an initial value problem 
\begin{equation} \label{eq:IVP} 
    \frac{\dd h}{\dd t} = f(h(t),\theta(t)), \qquad h(0) = a \in \mathcal{A} \subset \R^{\nhid}, 
\end{equation}
with a non-autonomous vector field $f : \R^{\nhid} \times \R^p \rightarrow \R^{\nhid}$, parameter function $\theta \in \Theta_{\text{NODE}} \subset C^0([0,T],\R^p)$, and a non-empty set of possible initial conditions $\mathcal{A} \subset \R^{\nhid}$. When required, the solution with initial condition $h(0) = a$ is denoted by $h_a(t)$ to emphasize the dependence on the initial condition. 

As with ResNets, we allow flexibility regarding the input and output dimensions of neural ODEs. We define \emph{neural ODEs} as the input-output maps given by
\begin{equation} \label{eq:NODE}
    \overline{\Phi}: \mathcal{X} \rightarrow \R^{\nout}, \qquad \overline{\Phi}(x) = \tilde{\lambda}(h_{\lambda(x)}(T)),
\end{equation} 
with $\mathcal{X}\subset \R^{\nin}$ open, input transformation $\lambda: \R^{\nin} \rightarrow \R^{\nhid}$, and output transformation $\tilde{\lambda}: \R^{\nhid}\rightarrow \R^{\nout}$. To more easily distinguish between neural ODE and ResNet architectures, we denote them in this section as $\overline{\Phi}$ and $\Phi$, respectively. As with ResNets, $\lambda$ and $\tilde{\lambda}$ are often chosen to be affine linear, but they can also be nonlinear maps. The architecture~\eqref{eq:NODE} depends on the time-$T$ map $h_{\lambda(x)}(T)$ of the initial value problem~\eqref{eq:IVP}. We assume that for every $a \in \mathcal{A}$, the solution of~\eqref{eq:IVP} exists on the entire time interval $[0,T]$. In the upcoming definition, we denote by $C^{i,j}(\mathcal{X}\times \mathcal{Y},\mathcal{Z})$ the space of functions $f: \mathcal{X}\times \mathcal{Y} \rightarrow \mathcal{Z}$ that are $i$-times continuously differentiable in the first variable and $j$-times continuously differentiable in the second variable.

\begin{definition}[Neural ODE] \label{def:node}
    For $k \geq 1$, we denote by $\NODE$ the set of all neural ODE architectures $\overline{\Phi}: \mathcal{X} \rightarrow \R^{\nout}$, $\mathcal{X}\subset \R^{\nin}$ open, defined by~\eqref{eq:NODE} with 
    \begin{itemize}
        \item input transformation $\lambda \in C^k(\R^{\nin},\R^{\nhid})$,
        \item output transformation $\tilde\lambda \in C^k(\R^{\nhid},\R^{\nout})$,
        \item parameter function $\theta \in \Theta_{\text{NODE}} \subset C^0([0,T],\R^p)$,
        \item vector field $f \in C^{k,0}(\R^{\nhid} \times \R^p, \R^{\nhid})$ with a set of possible initial conditions $\mathcal{A} \subset \R^{\nhid}$, such that there exists a unique solution of~\eqref{eq:IVP} on $[0,T]$ for every $a \in \mathcal{A}$.
    \end{itemize}
\end{definition}

The set of possible initial conditions of~\eqref{eq:IVP} is defined by $\mathcal{A} \coloneqq \lambda(\mathcal{X})$, as the input $x \in \mathcal{X} \subset \R^{\nin}$ of the neural ODE is mapped under the transformation $\lambda$ to $a = \lambda(x)$. The regularity of neural ODEs $\overline{\Phi} \in \NODE$ follows directly from the regularity of the vector field $f$, as explained in~\cite[Lemma 4.3]{kk2025}. We restrict Definition~\ref{def:node} to the case $k \geq 1$ to guarantee uniqueness of solution curves of~\eqref{eq:IVP} and hence well-posedness of the neural ODEs.

\begin{remark}
    The regularity assumptions on the vector field $f$ and the parameter function $\theta$ can be weakened by considering, for example, the Carathéodory conditions (cf.~\cite{Hale1977}), as discussed in \cite{Kuehn2023}. In this way, the parameter function $\theta$ does not need to be continuous, allowing typical choices such as piecewise constant parameter functions (cf.~\cite{Chen2018}).
\end{remark}

The following classification of non-augmented and augmented neural ODEs is independent of the choice of the vector field $f$ and applies to all neural ODEs $\overline{\Phi}\in \NODE$. The concept of non-augmented and augmented neural ODEs is illustrated in Figure~\ref{fig:node_architectures}. 

\begin{definition}[Neural ODE Classification]\label{def:node_classes}
    The class of neural ODEs denoted by $\NODE$, $k\geq 1$, $\mathcal{X}\subset \R^{\nin}$ open, is subdivided as follows:
    \begin{itemize}
        \item \emph{Non-augmented neural ODE} $\overline{\Phi} \in \textup{NODE}^k_\textup{N}(\mathcal{X},\R^{\nout})$: it holds $\nin \geq \nhid$.
        \item \emph{Augmented neural ODE} $\overline{\Phi} \in \textup{NODE}^k_\textup{A}(\mathcal{X},\R^{\nout})$: it holds $\nin < \nhid$.
    \end{itemize}
\end{definition}

\begin{figure}[h]
	\centering
	\begin{subfigure}{0.41\textwidth}
		\centering
		\begin{overpic}[scale = 0.23,tics=10]
			{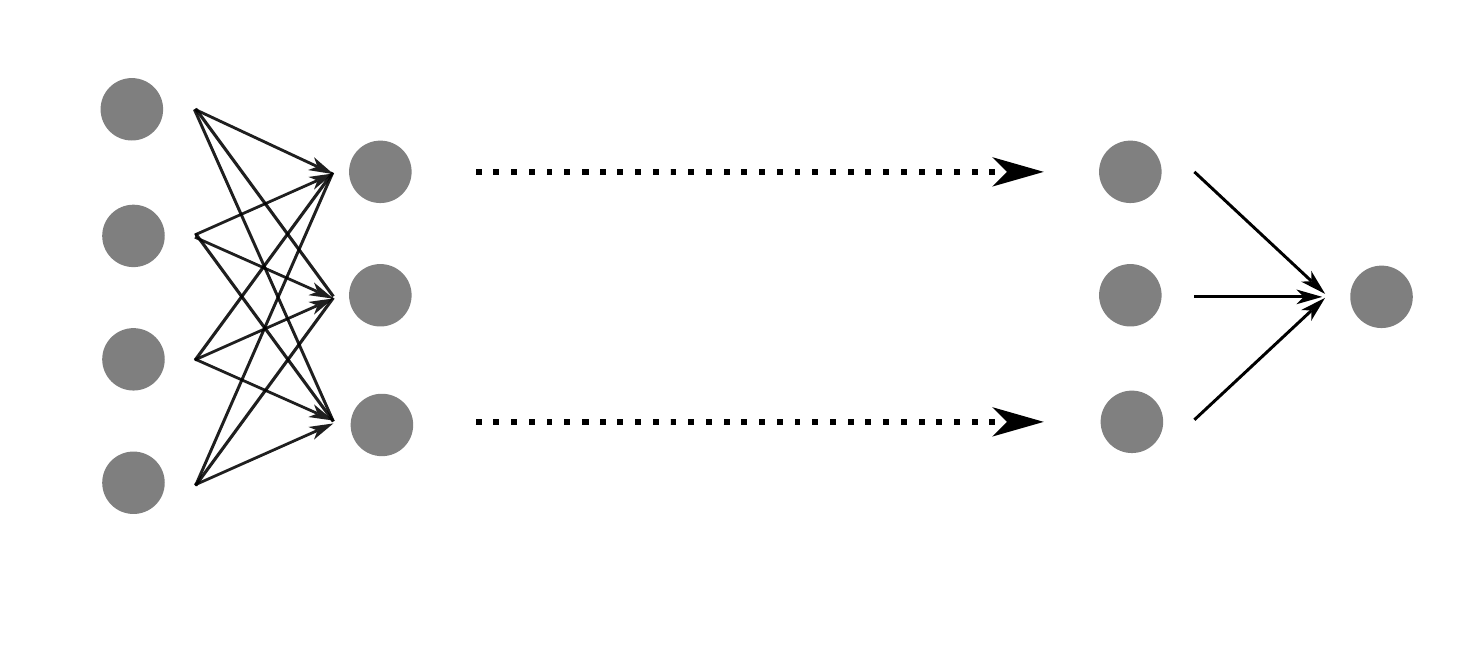}
			\put(7.5,1){\textcolor{black!70}{$x$}}
			\put(21,1){\textcolor{black!70}{$\lambda(x)$}}
			\put(71,1){\textcolor{black!70}{$h(T)$}}
			\put(89,1){\textcolor{black!70}{$\overline{\Phi}(x)$}}
			\put(44,22){\textcolor{black!70}{\normalsize{ODE}}}
		\end{overpic}
		\caption{Example of a non-augmented neural ODE $\overline{\Phi} \in \text{NODE}^k_{\text{N}}(\R^4,\R)$ with $\nhid = 3$.}
		\label{fig:node_arch_nonaugmented}
	\end{subfigure}
    \hspace{12mm}
	\begin{subfigure}{0.41\textwidth}
		\centering
		\begin{overpic}[scale = 0.23,tics=10]
			{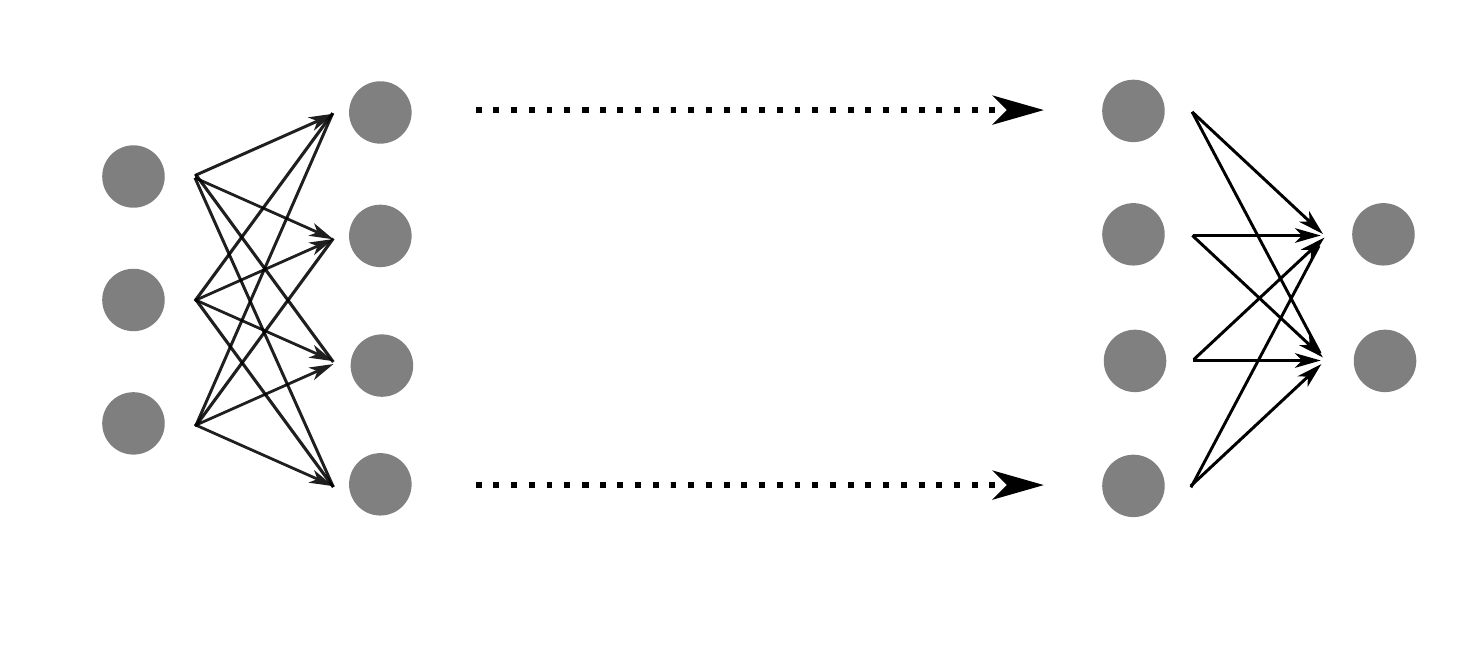}
			\put(7.5,1){\textcolor{black!70}{$x$}}
			\put(21,1){\textcolor{black!70}{$\lambda(x)$}}
			\put(71,1){\textcolor{black!70}{$h(T)$}}
			\put(89,1){\textcolor{black!70}{$\overline{\Phi}(x)$}}
			\put(44,22){\textcolor{black!70}{\normalsize{ODE}}}
		\end{overpic}
		\caption{Example of an augmented neural ODE $\overline{\Phi} \in \text{NODE}^k_{\text{A}}(\R^3,\R^2)$ with $\nhid = 4$.}
		\label{fig:node_arch_augmented}
	\end{subfigure}
	\caption{Classification of neural ODE architectures depending on the dimension of the input data and the vector field. Figure \ref{fig:node_architectures}\subref{fig:node_arch_nonaugmented}-\subref{fig:node_arch_augmented} is adapted from \cite[Figure 4.1(a)-(b)]{kk2025}.}
	\label{fig:node_architectures}
\end{figure}

\subsection{Relationship to ResNets}
\label{sec:node_relationship}

The following proposition demonstrates that, given our precise definitions, ResNets (cf.~Definition~\ref{def:resnet}) and neural ODEs (cf.~Definition~\ref{def:node}) can be understood as discrete and continuous counterparts of the same underlying dynamics. This is based on the well-established observation that ResNets can be interpreted as the explicit Euler discretization of neural ODEs \cite{Weinan2017}.

  \begin{proposition}[Relationship between ResNets and Neural ODEs]\label{prop:resnet_neuralODE}
    Let $\mathcal{X} \subset \R^{\nin}$ be open, $k \geq 1$.
    \begin{enumerate}[label=(\alph*), font=\normalfont]
        \item \label{prop:resnet_neuralODE_a} Given $L \in \mathbb{N}_{\geq 1}$ and $\overline{\Phi} \in \NODE$ defined over $[0,T]$, the explicit Euler discretization of $\overline{\Phi}$ with step size $\delta \coloneqq \frac{T}{L}$ results in a ResNet $\Phi \in \textup{RN}^k_{1,\delta}(\mathcal{X},\R^{\nout})$ with $L$ hidden layers.
        \item  \label{prop:resnet_neuralODE_b} Let $\Phi \in \RN$ be a ResNet with $L$ hidden layers. If the parameter dimensions are constant, the layer functions are identical, i.e., $p_l = p$ and $f_l(\cdot,\theta_l) \coloneqq f_{\textup{RN}}(\cdot,\theta_l)$ for all $l\in\{1,\ldots,L\}$, and $f_\textup{RN} \in C^{k,0}(\R^{\nhid}\times\R^p,\R^{\nhid})$, then there exists a neural ODE $\overline{\Phi} \in \NODE$ defined over $[0, L\delta]$ such that $\Phi$ is its explicit Euler discretization.
    \end{enumerate}
\end{proposition}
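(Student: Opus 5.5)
Both parts are proved by writing the explicit Euler scheme for the initial value problem~\eqref{eq:IVP} and matching it term by term with the ResNet update rule~\eqref{eq:resnet_updaterule} with $\eps = 1$. The input and output transformations $\lambda,\tilde\lambda$ are carried over unchanged in both directions. Consequently, only the hidden dynamics need to be identified, and the regularity requirements of Definitions~\ref{def:resnet} and~\ref{def:node} have to be checked.

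For part~\ref{prop:resnet_neuralODE_a}, I would fix the uniform grid $t_l \coloneqq l\delta$, $l\in\{0,\ldots,L\}$, with $\delta = T/L$, so that $t_L = T$. The explicit Euler method for~\eqref{eq:IVP} with initial value $h_0 = \lambda(x)$ reads
\begin{equation*}
    h_l = h_{l-1} + \delta f(h_{l-1},\theta(t_{l-1})).
\end{equation*}
I then define $\theta_l \coloneqq \theta(t_{l-1})\in\R^p$ and $f_l(\cdot,\theta_l) \coloneqq f(\cdot,\theta_l)$. This is precisely~\eqref{eq:resnet_updaterule} with $\eps = 1$ and residual parameter $\delta$. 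The resulting map $x\mapsto\tilde\lambda(h_L(\lambda(x)))$ is a ResNet with $L$ hidden layers. Since $f\in C^{k,0}$, each $f_l(\cdot,\theta_l)\in C^k(\R^{\nhid},\R^{\nhid})$, so $\Phi\in\textup{RN}^k_{1,\delta}(\mathcal{X},\R^{\nout})$.

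For part~\ref{prop:resnet_neuralODE_b}, I would take $T\coloneqq L\delta$ and the vector field $f\coloneqq f_{\textup{RN}}$, which lies in $C^{k,0}$ by assumption. The delicate point is constructing a \emph{continuous} parameter function $\theta\in C^0([0,T],\R^p)$ with $\theta(t_{l-1}) = \theta_l$ for all $l\in\{1,\ldots,L\}$. A piecewise constant choice would violate the continuity demanded in Definition~\ref{def:node}. Instead, I would use the piecewise linear interpolant through the nodes $(t_{l-1},\theta_l)$ and extend it constantly on $[t_{L-1},T]$. The Euler scheme only samples $\theta$ at the left endpoints, so it then reproduces the ResNet exactly.

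The main obstacle is well-posedness: Definition~\ref{def:node} requires that~\eqref{eq:IVP} has a unique solution on all of $[0,T]$ for every $a\in\mathcal{A} = \lambda(\mathcal{X})$. Local existence and uniqueness follow from Picard--Lindelöf. Indeed, $f$ is $C^1$ in $h$ since $k\ge1$, and continuous in $\theta$; composed with the continuous $\theta(t)$, it is continuous in $t$ and locally Lipschitz in $h$. Global existence on $[0,T]$ is not automatic, however. I would handle it either by restricting $\Theta_{\text{NODE}}$ to the chosen interpolant together with the set $\mathcal{A}$ on which solutions exist, or by noting that under a global Lipschitz or boundedness condition on $f_{\textup{RN}}$ no blow-up occurs. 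This is typical for the canonical residual functions~\eqref{eq:resnet_typical_f} with bounded activations (Assumption~\ref{ass:A_activation_bounded}), where $f$ is bounded. I expect the proof to be stated with this caveat, interpreting the existence requirement as part of the admissible class.
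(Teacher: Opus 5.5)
Your part~(a) coincides with the paper's argument, but part~(b) has a genuine gap. The statement concerns an arbitrary ResNet in $\textup{RN}^k_{\eps,\delta}(\mathcal{X},\R^{\nout})$, i.e.\ with \emph{any} skip parameter $\eps>0$. Your construction matches the Euler scheme against the update rule with $\eps=1$ and sets $f\coloneqq f_{\textup{RN}}$, so it tacitly assumes $\eps=1$. With $f\coloneqq f_{\textup{RN}}$, an explicit Euler step yields $h_{l-1}+\delta f_{\textup{RN}}(h_{l-1},\theta_l)$. This differs from the ResNet layer $\eps h_{l-1}+\delta f_{\textup{RN}}(h_{l-1},\theta_l)$ by $(1-\eps)h_{l-1}$, so your argument proves (b) only for $\eps=1$. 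The missing idea is to absorb the skip scaling into a linear part of the vector field,
\[
f(h,\theta)\coloneqq \frac{\eps-1}{\delta}\,h+f_{\textup{RN}}(h,\theta).
\]
This field remains in $C^{k,0}(\R^{\nhid}\times\R^p,\R^{\nhid})$, because the added term is linear in $h$ and independent of $\theta$. Its Euler step of size $\delta$ is $h+(\eps-1)h+\delta f_{\textup{RN}}(h,\theta)=\eps h+\delta f_{\textup{RN}}(h,\theta)$, which is exactly the ResNet layer. This is the paper's construction, and the remark after Corollary~\ref{cor:node_resnet_error} relating ResNets with $\eps\neq 1$ to neural ODEs relies on it.

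The remaining ingredients are fine. Your piecewise-linear interpolant meets the $C^0$ requirement of Definition~\ref{def:node} just as well as the paper's smooth (Lagrange) interpolant. The smooth choice has the extra benefit of lying in $C^1([0,T],\R^p)$, which Theorem~\ref{th:eulerdiscretization} needs if you want its error bound for the constructed neural ODE. Your well-posedness caveat is legitimate, and the paper does not address it: it infers $\overline{\Phi}\in\NODE$ from regularity alone. For example, $\nhid=1$, $f_{\textup{RN}}(h,\theta)=h^2$ and unbounded $\lambda(\mathcal{X})$ produce blow-up before $T$ for large initial data, whatever $\eps$ is. Two adjustments are needed, though. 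First, shrinking $\mathcal{A}$ is not available, since $\mathcal{A}=\lambda(\mathcal{X})$ is fixed by the given ResNet. Second, once $\frac{\eps-1}{\delta}h$ is added, the vector field is no longer bounded even for bounded activations. The argument must then use linear growth: if $\norm{f_{\textup{RN}}(h,\theta)}_\infty\le S_f$, then $\norm{f(h,\theta)}_\infty\le\frac{\abs{\eps-1}}{\delta}\norm{h}_\infty+S_f$, which still rules out finite-time blow-up by Gr\"onwall's inequality.
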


The proof of this proposition relies on standard arguments and mainly consists of translating the notation between ResNets and discretized neural ODEs. It is therefore proven in Appendix~\ref{app:proof_resnet_neuralODE} and included for completeness. The following remark argues that the assumptions of Proposition~\ref{prop:resnet_neuralODE}\ref{prop:resnet_neuralODE_b} are not restrictive in typical settings.

\begin{remark}
	The assumption $p_l = p$ that the layer parameters are of constant dimension for all $l \in \{1,\ldots, L\}$ can be established for any given $\Phi \in \RN$ by setting $p = \max\{p_1, \ldots, p_L\}$ and adding zero components to every strictly smaller $p_l$.
	
	Furthermore, the assumption $f_l(\cdot,\theta_l) \coloneqq f_\textup{RN}(\cdot,\theta_l)$ for $l\in \{1,\ldots,L\}$ is fulfilled if the residual function $f_l$ is of the typical form~\eqref{eq:resnet_typical_f} with~$\sigma_l = \sigma$ for every layer. The dependence on the layer $l$ is then only encoded in the parameters~$\theta_l \in \Theta_l \subset \R^{p_l}$.

    Finally, the additional assumption $f_\textup{RN} \in C^{k,0}(\R^{\nhid}\times\R^p,\R^{\nhid})$ in part~\ref{prop:resnet_neuralODE_b} requires joint continuity of the residual function in both the hidden state $h$ and the parameters $\theta$. This is not restrictive in practice: for canonical ResNets $\Phi \in \textup{RN}^k_{\eps,\delta,\sigma}(\mathcal{X},\R^{\nout})$ with activation function $\sigma \in C^k(\R,\R)$, the residual function~\eqref{eq:resnet_typical_f} is a composition of matrix-vector products and continuous nonlinearities, and hence automatically satisfies $f_\textup{RN} \in C^{k,0}(\R^{\nhid}\times\R^p,\R^{\nhid})$.
\end{remark}

In the following, we explicitly calculate the approximation error between the ResNet and neural ODE architectures considered, thereby adapting~\cite[Proposition 1]{Sander2022} to the more general neural ODE architecture of Definition~\ref{def:node}. The following theorem shows that the global error of the input-output map between neural ODEs and their explicit Euler discretization depends linearly on the step size or residual parameter $\delta$. Unless stated otherwise, we always consider Lipschitz constants with respect to the max-norm.

\begin{theorem}[Approximation Error between ResNets and Neural ODEs] \label{th:eulerdiscretization}
	Consider a neural ODE $\overline{\Phi} \in \textup{NODE}^1(\mathcal{X},\R^{\nout})$, $\mathcal{X} \subset \R^{\nin}$ open, based on the initial value problem 
	\begin{equation} \label{eq:IVP2}
		\frac{\dd h}{\dd t} = f(h(t),\theta(t)), \qquad h(0) = a \in \mathcal{A} \subset \R^{\nhid},
	\end{equation}
	on the time interval $t\in[0,T]$ with the following properties:
	\begin{itemize}
		\item The vector field $f\in C^{1,1}(\R^{\nhid} \times \R^p,\R^{\nhid})$ is continuously differentiable.
        \item The set of possible parameter functions fulfills $ \Theta_{\textup{NODE}}\subset  C^1([0,T],\R^p)$. 
		\item The output transformation $\tilde{\lambda}$ has global Lipschitz constant $K_{\tilde{\lambda}}$.
		\item The vector field $f$ has global Lipschitz constant~$K_\theta$ with respect to the first variable $h$, and the solution $h: [0,T] \rightarrow \R^{\nhid}$ has a bounded second derivative  $M_\theta \coloneqq \sup_{t \in [0,T]} \norm{h''(t)}_\infty$ uniformly for all $\theta \in \Theta_{\textup{NODE}}$. 
	\end{itemize}
	Fix $L \in \mathbb{N}_{\geq 1}$, then the ResNet $\Phi \in \textup{RN}^1_{1,\delta}(\mathcal{X},\R^{\nout})$ obtained by Euler's explicit method with step size $\delta \coloneqq \frac{T}{L}$ and the same transformations $\lambda$, $\tilde{\lambda}$, has the iterative update rule
	\begin{equation*}
		h_{l} = h_{l-1} + \delta \cdot f(h_{l-1},\theta_{l}), \qquad \theta_{l} \coloneqq \theta((l-1)\delta), \qquad l \in \{1,\ldots, L\},
	\end{equation*}
	with $h_0 = h(0)$. The global approximation error between the neural ODE and the corresponding ResNet is bounded by
	\begin{equation}  \label{eq:node_resnet_error}
		\norm{\Phi-\overline{\Phi}}_{\infty,\mathcal{X}}  \leq K_{\tilde{\lambda}} \cdot \frac{M_\theta \delta }{2K_\theta}\cdot \left(e^{K_\theta T}-1\right).
	\end{equation}
\end{theorem}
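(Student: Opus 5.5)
This is the standard global error estimate for the explicit Euler method, applied pointwise in the input $x$. At the end it is pushed through the Lipschitz output transformation $\tilde\lambda$.

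\textbf{Setup.} Fix $x \in \mathcal{X}$ and set $a = \lambda(x)$. Let $h$ be the exact solution of \eqref{eq:IVP2} with $h(0)=a$, and write $t_l \coloneqq l\delta$. Let $(h_l)_{l=0}^L$ be the ResNet iterates with $h_0 = a$. Since the same $\lambda$ is used in both models, the initial error vanishes:
\[
e_0 \coloneqq h(t_0) - h_0 = 0, \qquad e_l \coloneqq h(t_l)-h_l .
\]

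\textbf{Local truncation error.} Because $f \in C^{1,1}$ and $\theta \in C^1$, the solution $h$ is $C^2$ on $[0,T]$. Taylor's theorem with Lagrange remainder, applied componentwise in the max-norm, gives
\[
h(t_l) = h(t_{l-1}) + \delta\, h'(t_{l-1}) + \tau_l, \qquad \norm{\tau_l}_\infty \le \tfrac{M_\theta}{2}\delta^2 .
\]
Here $h'(t_{l-1}) = f(h(t_{l-1}),\theta(t_{l-1})) = f(h(t_{l-1}),\theta_l)$, by the choice $\theta_l = \theta((l-1)\delta)$.

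\textbf{Error recursion.} Subtracting the ResNet update $h_l = h_{l-1}+\delta f(h_{l-1},\theta_l)$ and using the Lipschitz constant $K_\theta$ of $f$ in its first argument, we obtain
\[
\norm{e_l}_\infty \le (1+\delta K_\theta)\norm{e_{l-1}}_\infty + \tfrac{M_\theta}{2}\delta^2 .
\]
Unrolling this recursion with $e_0=0$ and summing the geometric series yields
\[
\norm{e_L}_\infty \le \tfrac{M_\theta}{2}\delta^2 \cdot \frac{(1+\delta K_\theta)^L - 1}{\delta K_\theta} = \frac{M_\theta\delta}{2K_\theta}\bigl((1+\delta K_\theta)^L-1\bigr).
\]
Since $1+y\le e^y$, we have $(1+\delta K_\theta)^L \le e^{L\delta K_\theta} = e^{K_\theta T}$, which gives the exponential factor in \eqref{eq:node_resnet_error}.

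\textbf{Conclusion.} Apply the Lipschitz bound of $\tilde\lambda$:
\[
\norm{\Phi(x)-\overline{\Phi}(x)}_\infty \le K_{\tilde\lambda}\norm{e_L}_\infty .
\]
The resulting bound does not depend on $x$, because $M_\theta$ and $K_\theta$ are uniform over initial conditions and over $\Theta_{\textup{NODE}}$. Taking the supremum over $x\in\mathcal{X}$ gives \eqref{eq:node_resnet_error}.

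\textbf{Main obstacle.} The only delicate point is the Taylor remainder for a vector-valued function. The mean value form is not available for $\R^{\nhid}$-valued maps, so I would use either the componentwise Lagrange form or the integral form $\tau_l = \int_{t_{l-1}}^{t_l}(t_l-s)h''(s)\,\dd s$. Both give the constant $\tfrac{M_\theta}{2}\delta^2$ in the max-norm. Note also that the recursion divides by $K_\theta$, so the case $K_\theta>0$ is implicitly assumed; if $K_\theta=0$, the bound should be read as its limit $K_{\tilde\lambda}\tfrac{M_\theta}{2}\delta T$.
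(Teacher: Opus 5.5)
Your proof is correct and follows the same route as the paper. The paper cites the standard global error bound for the explicit Euler method \cite[Theorem 6.3]{Atkinson1989} to control $\max_{l}\norm{h(t_l)-h_l}_\infty$ uniformly in $x$ and then applies the Lipschitz continuity of $\tilde{\lambda}$; you instead re-derive that bound from the local truncation error, the recursion $\norm{e_l}_\infty \le (1+\delta K_\theta)\norm{e_{l-1}}_\infty + \tfrac{M_\theta}{2}\delta^2$, and the inequality $(1+\delta K_\theta)^L \le e^{K_\theta T}$, and your treatment of the componentwise Taylor remainder and the case $K_\theta = 0$ are correct details the paper leaves implicit.
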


\begin{proof}
	By the regularity assumptions $f\in C^{1,1}(\R^{\nhid} \times \R^p,\R^{\nhid})$ and $\theta \in \Theta_{\textup{NODE}}\subset C^1([0,T],\R^p)$, the solution $h: [0,T] \rightarrow \R^{\nhid}$ is twice continuously differentiable. As $\Phi$ and $\overline{\Phi}$ are based on the same input transformation $\lambda$, it holds $h_0 = h(0)$ for every input $x\in \mathcal{X}$. According to~\cite[Theorem 6.3]{Atkinson1989}, the error between the solution of~\eqref{eq:IVP2} and the discrete solution $\{h_0,\ldots,h_L\}$ obtained by Euler's explicit method satisfies 
	\begin{equation*}
		\max_{l \in \{0, \ldots, L\}} \norm{h(t_l) - h_l}_\infty \leq \frac{M_\theta \delta }{2K_\theta}\cdot \left(e^{K_\theta T}-1\right)
	\end{equation*}
	with constants $K_\theta$, $M_\theta$ and step size $\delta =\frac{T}{L}$, uniformly in $x \in \mathcal{X}$. Consequently, it follows for the approximation error between the ResNet and the neural ODE that 
	\begin{equation*}
		\norm{\Phi-\overline{\Phi}}_{\infty,\mathcal{X}}  =   \sup_{x \in \mathcal{X}} \normm{\tilde{\lambda}(h_L) - \tilde{\lambda}(h(T))}_\infty \leq K_{\tilde{\lambda}} \cdot \frac{M_\theta \delta }{2K_\theta}\cdot \max_{l \in \{0, \ldots, L\}} \norm{h(t_l) - h_l}_\infty 
	\end{equation*}
	using the Lipschitz continuity of the output transformation $\tilde{\lambda}$. This completes the proof.
\end{proof}

\begin{remark}\label{rem:discretizations}
    While we focus on the explicit Euler discretization due to its direct structural relation to standard ResNets, higher-order numerical schemes such as Runge-Kutta methods can be used to obtain tighter error bounds. In the discrete setting, these higher-order methods correspond to multi-step ResNet architectures~\cite{haber2017,Lu2018}.
\end{remark}

We now specialize Theorem~\ref{th:eulerdiscretization} to canonical ResNets with the typical residual function~\eqref{eq:resnet_typical_f}. The proof consists of explicitly computing the constants $M_\theta$ and $K_\theta$ and is given in Appendix~\ref{app:cor_proof_resnet_neuralODE}.

\begin{corollary}[Approximation Error between canonical ResNets and Neural ODEs] \label{cor:node_resnet_error}
	Consider a neural ODE $\overline{\Phi} \in \textup{NODE}^1(\mathcal{X},\R^{\nout})$, $\mathcal{X} \subset \R^{\nin}$ open, based on the initial value problem
	\begin{equation*}
		\frac{\dd h}{\dd t} = \widetilde{W} \sigma(W h(t) +b) + \tilde{b}, \qquad h(0) = h_0 \in \R^{\nhid},
	\end{equation*}
	with parameters $(W,\widetilde{W},b,\tilde{b})\in \Theta \subset \R^{m \times \nhid} \times \R^{\nhid\times m} \times \R^m \times \R^{\nhid}$ on the time interval $t\in[0,T]$ with the following properties:
	\begin{itemize}
		\item The activation function $\sigma\in C^1(\R,\R)$ fulfills Assumptions~\ref{ass:A_activation_lipschitz} and~\ref{ass:A_activation_bounded}, i.e., there exist constants $S_\sigma, K_\sigma > 0$ with  $\norm{\sigma}_{\infty,\R} \leq S_\sigma$ and $\norm{\sigma'}_{\infty,\R} \leq K_\sigma$.
		\item The output transformation $\tilde{\lambda}$ has global Lipschitz constant $K_{\tilde{\lambda}}$.
        \item The weight matrices $W$, $\widetilde{W}$, and the biases $\tilde{b}$ fulfill Assumption~\ref{ass:B_weights_bounded} with respect to the max-norm, i.e., there exist constants $\omega_\infty, \widetilde{\omega}_\infty, \tilde{\beta}_\infty \geq 0$, such that $\norm{W}_\infty\leq \omega_\infty$, $\normm{\widetilde{W}}_\infty \leq \widetilde{\omega}_\infty $ and $\normm{\tilde{b}}_\infty \leq \tilde{\beta}_\infty$  for all $(W,\widetilde{W},b,\tilde{b})\in \Theta$.
	\end{itemize}
	Then, with the notation from Theorem~\ref{th:eulerdiscretization} it follows that
	\begin{equation}\label{eq:node_resnet_error_2}
		  \norm{\Phi-\overline{\Phi}}_{\infty,\mathcal{X}} \leq  K_{\tilde{\lambda}} \cdot \frac{\left(\widetilde{\omega}_\infty  S_\sigma + \tilde{\beta}_\infty\right) \cdot \delta }{2\omega_\infty}\cdot \left(e^{ K_{\sigma} \widetilde{\omega}_\infty\omega_\infty T}-1\right).
	\end{equation}
\end{corollary}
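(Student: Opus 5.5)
The plan is to obtain the corollary as a direct specialization of Theorem~\ref{th:eulerdiscretization}. The vector field is $f(h,\theta)=\widetilde{W}\sigma(Wh+b)+\tilde{b}$ with constant (hence $C^1$) parameter function. I will identify admissible constants $K_\theta$ and $M_\theta$ in terms of $\omega_\infty,\widetilde{\omega}_\infty,\tilde{\beta}_\infty,S_\sigma,K_\sigma$ and substitute them into~\eqref{eq:node_resnet_error}. The exponent in~\eqref{eq:node_resnet_error_2} dictates $K_\theta=K_\sigma\widetilde{\omega}_\infty\omega_\infty$. The prefactor $\frac{M_\theta\delta}{2K_\theta}$ must then equal $\frac{(\widetilde{\omega}_\infty S_\sigma+\tilde{\beta}_\infty)\delta}{2\omega_\infty}$, which forces the target value $M_\theta=K_\sigma\widetilde{\omega}_\infty(\widetilde{\omega}_\infty S_\sigma+\tilde{\beta}_\infty)$.

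\emph{Step 1 (regularity and Lipschitz constant).} Since $\sigma\in C^1$ and the parameters are constant in time, $f\in C^{1,1}$ and $\Theta_{\textup{NODE}}\subset C^1([0,T],\R^p)$ hold trivially. By the mean value theorem and Assumption~\ref{ass:A_activation_lipschitz},
\begin{equation*}
\norm{f(h,\theta)-f(g,\theta)}_\infty\le \normm{\widetilde{W}}_\infty K_\sigma\norm{W}_\infty\norm{h-g}_\infty\le K_\sigma\widetilde{\omega}_\infty\omega_\infty\norm{h-g}_\infty .
\end{equation*}
This gives $K_\theta=K_\sigma\widetilde{\omega}_\infty\omega_\infty$ uniformly on $\Theta$.

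\emph{Step 2 (second derivative).} First, Assumption~\ref{ass:A_activation_bounded} yields $\norm{h'(t)}_\infty\le\widetilde{\omega}_\infty S_\sigma+\tilde{\beta}_\infty$. Differentiating the ODE gives $h''(t)=\widetilde{W}\,\sigma'(Wh+b)\,W\,h'(t)$. I will then bound this expression so as to reach the target $M_\theta=K_\sigma\widetilde{\omega}_\infty(\widetilde{\omega}_\infty S_\sigma+\tilde{\beta}_\infty)$, uniformly in $t$ and $\theta$. Substituting $K_\theta$ and $M_\theta$ into~\eqref{eq:node_resnet_error} then gives exactly~\eqref{eq:node_resnet_error_2}.

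\emph{Main obstacle.} Step 2 is where I expect the difficulty. The naive submultiplicative estimate of $\widetilde{W}\sigma'(Wh+b)Wh'$ produces an extra factor $\norm{W}_\infty\le\omega_\infty$, giving $M_\theta=K_\sigma\widetilde{\omega}_\infty\omega_\infty(\widetilde{\omega}_\infty S_\sigma+\tilde{\beta}_\infty)$. This is exactly the factor that must not appear if the prefactor is to carry $\frac{1}{\omega_\infty}$.

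My first attempt will be to avoid pulling $W$ out separately. Instead I will bound the directional derivative $\frac{\dd}{\dd t}\sigma(Wh(t)+b)$ componentwise, checking whether the normalization of the rows of $W$ allows this without a factor $\omega_\infty$. Failing that, I will check whether Atkinson's estimate \cite[Theorem 6.3]{Atkinson1989} can be applied to a time-rescaled system in which $W$ is absorbed into the time variable.

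Whatever comes out of this step, I will track precisely where $\omega_\infty$ enters. The direct computation only gives the stated bound multiplied by $\omega_\infty$, so it implies~\eqref{eq:node_resnet_error_2} only when $\omega_\infty\le1$. If neither alternative route removes the factor, I will conclude that the corollary as stated needs to be corrected rather than proven.
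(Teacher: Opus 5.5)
Your Steps 1 and 2 follow the same route as the paper: read off $K_\theta$ and $M_\theta$, then insert them into Theorem~\ref{th:eulerdiscretization}. Your suspicion in the ``main obstacle'' paragraph is correct. The paper reaches the value $M_\theta=K_\sigma\widetilde{\omega}_\infty(\widetilde{\omega}_\infty S_\sigma+\tilde\beta_\infty)$ only by writing $h''(t)=\widetilde{W}\sigma'(Wh(t)+b)\cdot\bigl(\widetilde{W}\sigma(Wh(t)+b)+\tilde b\bigr)$. That formula drops the inner factor $W$ of the chain rule. The product is not even well defined unless $m=\nhid$, since $\sigma'(Wh+b)\in\R^{m\times m}$ while the bracket lies in $\R^{\nhid}$. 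With the correct formula $h''=\widetilde{W}\sigma'(Wh+b)\,W\,h'$ you only get $M_\theta=K_\sigma\widetilde{\omega}_\infty\omega_\infty(\widetilde{\omega}_\infty S_\sigma+\tilde\beta_\infty)$, exactly as you found.

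Neither rescue attempt can work, so do not spend time on them. The componentwise derivative is $\sigma'([a]_i)\,(Wh')_i$, and $\abs{(Wh')_i}$ can equal $\omega_\infty\norm{h'}_\infty$, because nothing normalizes the rows of $W$. Time rescaling does not help either. Under $t\mapsto ct$ you get $K\mapsto K/c$, $M\mapsto M/c^2$, $\delta\mapsto c\delta$, $T\mapsto cT$, and the Euler iterates are unchanged, so the bound $\frac{M\delta}{2K}(e^{KT}-1)$ is invariant.

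More decisively, \eqref{eq:node_resnet_error_2} is false for $\omega_\infty>1$. Take the following setting:
\begin{itemize}
\item $\nin=\nhid=\nout=m=1$, $\mathcal{X}=(-1,1)$, $\lambda=\tilde\lambda=\mathrm{id}$;
\item $\sigma=\tanh$, so $S_\sigma=K_\sigma=K_{\tilde\lambda}=1$;
\item $\Theta=\{(\omega,1,0,0)\}$, and $L=1$, $T=\delta$.
\end{itemize}
The one-step error at $x$ is $\int_0^\delta(\delta-s)h''(s)\,\dd s$ with $h''=\omega\tanh'(\omega h)\tanh(\omega h)$, and $\max_y \tanh'(y)\tanh(y)=\frac{2}{3\sqrt3}$. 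At $x=\operatorname{artanh}(1/\sqrt3)/\omega$ the error is $\frac{\delta^2}{2}\cdot\frac{2\omega}{3\sqrt3}\,(1+O(\omega\delta))$. The right-hand side of \eqref{eq:node_resnet_error_2} is $\frac{\delta}{2\omega}(e^{\omega\delta}-1)=\frac{\delta^2}{2}(1+O(\omega\delta))$. Concretely, for $\omega=10$ and $\delta=10^{-3}$ the error is at least $1.9\cdot 10^{-6}$, while the claimed bound is about $5.0\cdot 10^{-7}$.

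So commit to your fallback. Your Steps 1--2, with the direct estimate, rigorously prove the corrected bound
\[
\norm{\Phi-\overline{\Phi}}_{\infty,\mathcal{X}}\le K_{\tilde\lambda}\cdot\frac{(\widetilde{\omega}_\infty S_\sigma+\tilde\beta_\infty)\,\delta}{2}\cdot\left(e^{K_\sigma\widetilde{\omega}_\infty\omega_\infty T}-1\right).
\]
This is the stated bound multiplied by $\omega_\infty$, so it implies the stated one exactly when $\omega_\infty\le 1$. It is the version the corollary should state, and the paper's proof does not establish anything stronger.
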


The preceding theorem and corollary show that, in the limit $\delta \rightarrow 0$, neural ODEs and the corresponding ResNets can be viewed as continuous and discrete realizations of the same underlying dynamics. In the setting of Theorem~\ref{th:eulerdiscretization}, this correspondence holds for ResNets with skip parameter $\eps = 1$. More generally, ResNets with arbitrary skip parameter $\eps \neq 1$ are also related to neural ODEs, as discussed in Proposition~\ref{prop:resnet_neuralODE}\ref{prop:resnet_neuralODE_b}. In this general case, taking the limit $\delta \rightarrow 0$ for fixed $\eps > 0$ means $\alpha \coloneqq \frac{\delta}{\eps} \rightarrow 0$, as also discussed in Sections~\ref{sec:resnet_gradient} and~\ref{sec:resnet_tunnel}. 

\subsection{Existence of Critical Points}     
\label{sec:node_criticalpoints}

In this section, we characterize the existence of critical points in neural ODEs and ResNets in the parameter regime $0<\alpha \ll 1$. For that purpose, we first extend the result of~\cite{kk2025} that non-augmented neural ODEs cannot have any critical points to our setting of general input and output transformations.

\begin{theorem}[No Critical Points in Non-Augmented Neural ODEs]\label{th:node_criticalpoints}
	Let $\overline{\Phi} \in \textup{NODE}^1_{\textup{N}}(\mathcal{X},\R)$, $\mathcal{X}\subset \R^{\nin}$ open, be a scalar non-augmented neural ODE and let the input and output transformations $\lambda$, $\tilde{\lambda}$ fulfill Assumption~\ref{ass:C_input_output}. Then $\overline{\Phi}$ cannot have any critical points, i.e., $\nabla_x \overline{\Phi}(x) \neq 0$ for all $x \in \mathcal{X}$.
\end{theorem}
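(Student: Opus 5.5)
The plan is to compute the input gradient of $\overline{\Phi}$ by the chain rule, in the same way as for ResNets in Proposition~\ref{prop:resnet_gradient}, and then argue as in Proposition~\ref{prop:resnet_nonaugmented} that every factor has full rank.

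Write $\overline{\Phi}(x) = \tilde{\lambda}(h_{\lambda(x)}(T))$ and denote by $\varphi_T: \mathcal{A}\to\R^{\nhid}$, $a\mapsto h_a(T)$, the time-$T$ flow map of~\eqref{eq:IVP}. Then
\begin{equation*}
    \partial_x \overline{\Phi}(x) = \partial_y\tilde{\lambda}\bigl(\varphi_T(\lambda(x))\bigr)\cdot \partial_a \varphi_T(\lambda(x)) \cdot \partial_x\lambda(x) \in \R^{1\times \nin}.
\end{equation*}
Since $f\in C^{1,0}$ and $\theta$ is continuous, standard differentiable dependence on initial conditions applies. It gives $\varphi_T\in C^1$, and $Y(t)\coloneqq\partial_a h_a(t)$ solves the variational equation
\begin{equation*}
    Y'(t) = \partial_h f\bigl(h_a(t),\theta(t)\bigr)\,Y(t), \qquad Y(0)=\textup{Id}_{\nhid}.
\end{equation*}

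The key step, and the only real point of substance, is to show that $Y(T)$ is invertible. I would use Liouville's formula,
\begin{equation*}
    \det Y(T) = \exp\Bigl(\int_0^T \operatorname{tr}\,\partial_h f(h_a(s),\theta(s))\,\dd s\Bigr) \neq 0,
\end{equation*}
where the integrand is continuous in $s$. Alternatively, $\varphi_T$ is a diffeomorphism onto its image with inverse given by the backward flow. I will cite the uniqueness and existence assumption of Definition~\ref{def:node}, together with a standard ODE reference such as~\cite{Hale1977}, for the differentiability of the flow.

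Finally, I combine the ranks exactly as in the proof of Proposition~\ref{prop:resnet_nonaugmented}. By Assumption~\ref{ass:C_input_output}, $\partial_x\lambda(x)$ has rank $\nhid$ and $\partial_y\tilde\lambda$ has rank $1=\min\{\nhid,1\}$. Because $\nin\ge\nhid\ge 1$, the matrix $\partial_x\lambda(x)$ is surjective onto $\R^{\nhid}$, and so is $Y(T)\,\partial_x\lambda(x)$. Composing with the nonzero row vector $\partial_y\tilde\lambda$ therefore yields a nonzero row. Hence $\nabla_x\overline{\Phi}(x)\neq 0$ for every $x\in\mathcal{X}$.
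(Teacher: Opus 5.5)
Your proposal is correct and follows the same route as the paper: you factor $\nabla_x\overline{\Phi}(x)$ by the chain rule, show that each factor has full rank, and use $\nin \geq \nhid \geq 1$ to conclude that the product is a nonzero row. The only difference is that you prove invertibility of $\partial_a h_a(T)$ directly, via Liouville's formula or the backward flow, whereas the paper cites \cite[Proposition~4.10]{kk2025} for this fact.
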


\begin{proof}
    As $\overline{\Phi} \in C^1(\mathcal{X},\R)$, we can calculate the network input gradient with the multi-dimensional chain rule, which yields
	\begin{equation} \label{eq:node_gradient}
		\nabla_x\overline{\Phi}(x) = \left(\partial_x \lambda(x)\right)^\top \left(\partial_a h_a(T) \right)^\top \left(\partial_{h_a(T)}{\tilde{\lambda}}(h_a(T))\right)^\top \in \R^{\nin},
	\end{equation}
	where $\partial_x \lambda(x) \in \R^{\nhid \times \nin}$, $\partial_a h_a(T)  \in \R^{\nhid \times \nhid}$, and $\partial_{h_a(T)}{\tilde{\lambda}}(h_a(T)) \in \R^{1 \times \nhid}$ are Jacobian matrices.
	By~\cite[Proposition 4.10]{kk2025}, the Jacobian matrix $\partial_a h_a(T)$ of the time-$T$ map $h_a(T)$ with respect to the initial condition $a$ always has full rank~$\nhid$. 
    By Assumption~\ref{ass:C_input_output}, also the Jacobian matrices $\partial_x \lambda(x) \in \R^{\nhid \times \nin}$ and $\partial_{h_a(T)}{\tilde{\lambda}}(h_a(T)) \in \R^{1 \times \nhid}$ have both full rank.
	
    As $\overline{\Phi}$ is scalar and non-augmented, it holds that $\nin \geq \nhid \geq \nout = 1$, such that the dimensions in the matrix product~\eqref{eq:node_gradient} are monotonically decreasing. It follows that $\nabla_x \overline{\Phi}(x)$ always has full rank~$1$ (i.e., it is non-zero), uniformly in $x \in \mathcal{X}$, cf.~\cite[Lemma C.1]{kk2025}.
\end{proof}

In the following theorem, we show for non-augmented ResNets $\Phi \in \textup{RN}^1_{\eps,\delta,\textup{N}}(\mathcal{X},\R)$, $\mathcal{X}\subset \R^{\nin}$ open, that the property that no critical points exist, persists for $\alpha \coloneqq \frac{\delta}{\eps}$ sufficiently small. In the case of canonical ResNets we provide an explicit upper bound on the ratio~$\alpha$, below which $\Phi$ cannot have any critical points.

\begin{theorem}[No Critical Points in Non-Augmented ResNets with $0<\alpha \ll 1$] \label{th:resnet_criticalpoints_alphasmall}
	Consider a scalar non-augmented ResNet $\Phi \in \textup{RN}^1_{\eps,\delta,\textup{N}}(\mathcal{X},\R)$, $\mathcal{X}\subset \R^{\nin}$ open, with the following properties:
	\begin{itemize}
		\item The input and output transformations $\lambda$ and $\tilde{\lambda}$ fulfill Assumption~\ref{ass:C_input_output}.
		\item All residual functions $f_l(\cdot,\theta_l)\in C^1(\R^{\nhid},\R^{\nhid})$ are globally Lipschitz continuous with Lipschitz constant $K_f$ (w.r.t.~the max-norm), uniformly for all $\theta_l \in \Theta_l \subset \R^{p_l}$ and $l \in \{1,\ldots,L\}$.
	\end{itemize}
	Then, if $\alpha = \frac{\delta}{\eps} < \frac{1}{K_f}$,  $\Phi$ cannot have any critical points, i.e., $\nabla_x \Phi(x) \neq 0$ for all $x \in \mathcal{X}$. If the ResNet $\Phi \in \textup{RN}^1_{\eps,\delta,\sigma,\textup{N}}(\mathcal{X},\R)$ is canonical, the assumption on the residual functions $f_l$ can be replaced by the following:
	\begin{itemize}
		\item The component-wise applied activation functions $\sigma_l\in C^1(\R,\R)$ fulfill Assumption~\ref{ass:A_activation_lipschitz}, i.e., there exists a constant $K_\sigma > 0$ with $\norm{\sigma'_l}_{\infty,\R} \leq K_\sigma$ for all $l\in \{1,\ldots,L\}$.
		\item The largest singular value of the matrix product $W_l\widetilde{W}_{l}$ is uniformly bounded from above, i.e., there exists a constant $\nu_{\max}>0$ such that  $\sigma_{\max}(W_l\widetilde{W}_{l}) \leq \nu_{\max}$ for all $\theta_l = (W_l,\widetilde{W}_l,b_l,\tilde{b}_l)\in \Theta_l$ and $l \in \{1,\ldots,L\}$.
	\end{itemize}
	Then, if $\alpha < \frac{1}{\nu_{\max} \cdot K_{\sigma}}$,  $\Phi$ cannot have any critical points.
\end{theorem}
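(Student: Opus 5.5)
The plan is to reduce everything to Proposition~\ref{prop:resnet_nonaugmented}. Assumption~\ref{ass:C_input_output} is given, so it only remains to check that $-\frac{1}{\alpha}$ is never an eigenvalue of $\partial_{h_{l-1}}f_l(h_{l-1},\theta_l)$ for any layer, state and parameter. This follows once the spectral radius satisfies $\rho\bigl(\partial_{h_{l-1}}f_l(h_{l-1},\theta_l)\bigr) < \frac{1}{\alpha}$ uniformly, because every eigenvalue $\mu$ obeys $\abs{\mu}\le\rho$, while $\abs{-1/\alpha}=1/\alpha$.

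\textbf{General case.} First I show that global Lipschitz continuity of $f_l(\cdot,\theta_l)\in C^1$ with constant $K_f$ in the max-norm gives $\norm{\partial_{h}f_l(h,\theta_l)}_\infty\le K_f$ for every $h$. For a direction $v$, write
\[
\partial_h f_l(h,\theta_l)v=\lim_{t\to0}\frac{f_l(h+tv,\theta_l)-f_l(h,\theta_l)}{t}.
\]
Each difference quotient has max-norm at most $K_f\norm{v}_\infty$, so the limit does too; taking the supremum over $v$ bounds the induced $\infty$-norm. Any induced matrix norm dominates the spectral radius, hence $\rho(\partial_h f_l)\le K_f<\frac{1}{\alpha}$ whenever $\alpha<\frac{1}{K_f}$. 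Lemma~\ref{lem:resnet_Dl_fullrank} and Proposition~\ref{prop:resnet_nonaugmented} then finish the argument.

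\textbf{Canonical case.} By Proposition~\ref{prop:resnet_gradient}, the Jacobian is $J_l=\widetilde W_l\,\sigma_l'(a_l)\,W_l\in\R^{\nhid\times\nhid}$. The hypothesis is stated for the reversed product $W_l\widetilde W_l\in\R^{m_l\times m_l}$, so I will use the standard fact that $AB$ and $BA$ have the same nonzero eigenvalues. With $A=\widetilde W_l$ and $B=\sigma_l'(a_l)W_l$, the nonzero eigenvalues of $J_l$ coincide with those of $\sigma_l'(a_l)\,W_l\widetilde W_l$. The eigenvalue in question, $-1/\alpha$, is nonzero, so it suffices to bound the spectral radius of this $m_l\times m_l$ matrix. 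Using the spectral norm and submultiplicativity,
\[
\rho\bigl(\sigma_l'(a_l)W_l\widetilde W_l\bigr)\le\norm{\sigma_l'(a_l)}_2\,\normm{W_l\widetilde W_l}_2\le K_\sigma\,\nu_{\max}.
\]
Here $\norm{\sigma_l'(a_l)}_2\le K_\sigma$ because $\sigma_l'(a_l)$ is diagonal and Assumption~\ref{ass:A_activation_lipschitz} bounds its entries, and $\normm{W_l\widetilde W_l}_2=\sigma_{\max}(W_l\widetilde W_l)\le\nu_{\max}$ by hypothesis. Hence $\alpha<\frac{1}{\nu_{\max}K_\sigma}$ gives $\rho<1/\alpha$, and the conclusion follows as before.

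The main obstacle is this canonical step. The hypothesis bounds the product in the reversed order $W_l\widetilde W_l$ rather than $\widetilde W_l\sigma_l' W_l$, and the diagonal factor $\sigma_l'(a_l)$ sits in between. Cycling $\widetilde W_l$ to the right end via the $AB/BA$ eigenvalue identity, which holds for rectangular factors, is what moves the matrices into the order the hypothesis controls. Restricting attention to nonzero eigenvalues is legitimate precisely because $-1/\alpha\ne0$.
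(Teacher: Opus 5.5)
Your proof is correct and follows the paper's own argument. You reduce to Proposition~\ref{prop:resnet_nonaugmented} and bound the spectral radius of the Jacobian by an induced norm ($\|\partial_h f_l\|_\infty\le K_f$ in the general case). In the canonical case you use that $AB$ and $BA$ share their nonzero eigenvalues, which moves you to an $m_l\times m_l$ product whose spectral norm is at most $\nu_{\max}K_\sigma$. The only differences are cosmetic: you put $\sigma_l'(a_l)$ on the left ($\sigma_l'(a_l)W_l\widetilde W_l$ instead of the paper's $W_l\widetilde W_l\sigma_l'(a_l)$), and your difference-quotient derivation of $\|\partial_h f_l\|_\infty\le K_f$ from the Lipschitz bound argues in the correct direction, more cleanly than the paper's appeal to the mean value theorem.
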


\begin{proof}
	We prove the theorem by applying Proposition~\ref{prop:resnet_nonaugmented}, which requires showing that for $\alpha <\frac{1}{K_{f}}$, it holds that $- \frac{1}{\alpha}$ is not an eigenvalue of the Jacobian matrix $ \partial_{h_{l-1}}f_l(h_{l-1},\theta_l)$ for all $l \in \{1,\ldots,L\}$, $h_{l-1} \in \R^{\nhid}$, and $\theta_l \in \Theta_l \subset \R^{p_l}$. For the eigenvalues $\lambda_i$, we can estimate
	\begin{equation*}
		\max_{i \in \{1,\ldots,\nhid\}} \abs{ \lambda_i( \partial_{h_{l-1}}f_l(h_{l-1},\theta_l)) } \leq \norm{\partial_{h_{l-1}}f_l(h_{l-1},\theta_l)}_\infty \leq K_f < \frac{1}{\alpha},
	\end{equation*} 
	where we used the fact that the maximal absolute value of the eigenvalues~$\lambda_i$ is bounded above by every induced matrix norm. By the mean value theorem, the global Lipschitz constant $K_{f}$ is an upper bound for the norm of the Jacobian matrix $\partial_{h_{l-1}}f_l(h_{l-1},\theta_l)$. Consequently, $-\frac{1}{\alpha}$ cannot be an eigenvalue of $\partial_{h_{l-1}}f_l(h_{l-1},\theta_l)$ for every $l \in \{1,\ldots,L\}$, such that the statement follows from Proposition~\ref{prop:resnet_nonaugmented}.
	
	In the case $\Phi \in \textup{RN}^1_{\eps,\delta,\sigma,\textup{N}}(\mathcal{X},\R)$, we have $\partial_{h_{l-1}}f_l(h_{l-1},\theta_l) =  \widetilde{W}_{l}\sigma'_{l}(a_{l})W_{l}$ with $a_l \coloneqq W_l h_{l-1} + b_l$, which is the explicit form of the Jacobian matrix determined in Proposition~\ref{prop:resnet_gradient}. As for two matrices $A\in \R^{a\times b}$ and $B \in \R^{b \times a}$, the matrix products $AB \in \R^{a \times a}$ and $BA \in \R^{b \times b}$ have the same non-zero eigenvalues (cf.~\cite{Horn2012}), we can estimate
	\begin{align*}
		&\max_{i \in \{1,\ldots,\nhid\}} \abs{ \lambda_i(\widetilde{W}_l\sigma'_{l}(a_{l})W_{l}) } = \max_{i \in \{1,\ldots,\nhid\}} \abs{ \lambda_i(W_{l} \widetilde{W}_l\sigma'_{l}(a_{l})) } \\ 
        \leq & \;\normm{W_{l} \widetilde{W}_l \sigma'_{l}(a_{l})}_2 \leq \normm{W_{l} \widetilde{W}_l}_2 \cdot \norm{\sigma'_{l}(a_{l})}_2 \leq \nu_{\max} \cdot K_{\sigma} <\frac{1}{\alpha}
	\end{align*}
	for all $x \in \mathcal{X}$. Here, we used that the maximum absolute eigenvalue is bounded by the sub-multiplicative Euclidean norm, that $\normm{W_{l} \widetilde{W}_l}_2 = \sigma_{\max}(W_{l} \widetilde{W}_l)$ and that $\norm{\sigma'_{l}(a_{l})}_2  = \norm{\sigma_l'}_{\infty,\R}$, as $\sigma'_{l}(a_{l})$ is a diagonal matrix. Since $-\frac{1}{\alpha}$ cannot be an eigenvalue of $\widetilde{W}_l\sigma'_{l}(a_{l})W_{l}$ for any $l \in \{1,\ldots,L\}$, the statement follows from Proposition~\ref{prop:resnet_nonaugmented}.
\end{proof}

\begin{remark}
    Under Assumption~\ref{ass:B_weights_bounded}, that all weight matrices are uniformly bounded, a uniform upper bound for the largest singular value of the matrix products $W_l \widetilde{W}_l $ can always be found. 
\end{remark}

\section{Case $\alpha \gg 1$: Close to Feed-Forward Neural Networks}
\label{sec:mlp}

In Section~\ref{sec:mlp_model}, we define general feed-forward neural networks (FNNs) and their canonical form, given by multilayer perceptrons (MLPs). In Section~\ref{sec:mlp_relationship}, we link ResNets to FNNs by taking the parameter limit $\alpha \coloneqq \frac{\delta}{\eps}\rightarrow \infty$, corresponding to a ResNet with dominating residual term. Additionally, we calculate the approximation error between FNNs and ResNets for large residual terms (large $\alpha$). Finally, in Section~\ref{sec:mlp_criticalpoints} we show that (in analogy to non-augmented MLPs) critical points cannot exist in non-augmented ResNets for $\alpha$ sufficiently large. 

\subsection{Feed-Forward Neural Networks}
\label{sec:mlp_model}

Classical feed-forward neural networks, such as MLPs, are structured in consecutive layers. In the following, we introduce the general FNN architecture we study, which, unlike ResNets, has no skip connection. We introduce FNNs with $L+2$ layers to align with the notation introduced for ResNets.
General FNNs are structured in layers $h_l \in \R^{n_l}$ defined by the iterative update rule
\begin{equation}
    h_l = \delta f_l(h_{l-1},\theta_l), \qquad l \in \{0,\ldots,L+1\},
\end{equation}
with input $h_{-1} \coloneqq x \in \mathcal{X}$, $\mathcal{X} \subset \R^{\nin}$, a (typically nonlinear) \emph{layer map} $f_l:\R^{n_{l-1}} \times \R^{p_{l}} \rightarrow \R^{n_l}$, and \emph{parameters} $\theta_{l} \in \Theta_l \subset \R^{p_{l}}$, where $ \Theta_l$ denotes the set of parameters of layer $l$. As in the definition of ResNets, we include the \emph{scaling parameter} $\delta>0$. In the case of MLPs, the layer map has the explicit form
\begin{equation} \label{eq:updaterule_mlp} 
	f_l(h_{l-1},\theta_{l}) \coloneqq \widetilde{W}_l\sigma_l(W_lh_{l-1}+b_l)+\tilde{b}_l = \widetilde{W}_l\sigma_l(a_l)+\tilde{b}_l
\end{equation}
where $\widetilde{W}_l \in \R^{n_{l} \times m_l}$, $W_l \in \R^{m_{l}\times n_{l-1}}$ are weight matrices, $b_l \in \R^{m_{l}}$, $\tilde{b}_l \in \R^{n_{l}}$ are biases, $h_{-1} \coloneqq x$ is the input data and $\sigma_l: \R\rightarrow \R$ is a component-wise applied activation function. As for ResNets, we write $\sigma_l$ for the component-wise extension $\sigma_l: \R^{m_{l}}\rightarrow \R^{m_{l}}$, where $m_l$ is the dimension of the pre-activated states $a_l \coloneqq W_l h_{l-1}+b_l$, $l \in \{0,\ldots,L+1\}$. The layer map in~\eqref{eq:updaterule_mlp} agrees with the residual functions of canonical ResNets defined in~\eqref{eq:resnet_typical_f} up to the layer-dependent dimension $n_l$. We denote the FNN input-output map by 
\begin{equation} \label{eq:mlp}
	\overline{\Phi}: \mathcal{X} \rightarrow \R^{\nout}, \; \mathcal{X} \subset \R^{\nin}, \qquad \overline{\Phi}(x) = h_{L+1}(x)
\end{equation}
with $\nout \coloneqq n_{L+1}$. To more easily distinguish general feed-forward architectures (including MLPs) from ResNet architectures, we denote them in this section by $\overline{\Phi}$ and $\Phi$, respectively. In contrast to ResNets, where the update rule~\eqref{eq:resnet_updaterule} requires constant layer width, the layer widths of FNNs can change. As the input and output dimensions $\nin \coloneqq n_{-1}$ and $\nout \coloneqq n_{L+1}$ are for FNNs not required to be equal, we do not include two additional transformations $\lambda$, $\tilde{\lambda}$ to the architecture as for ResNets in~\eqref{eq:resnet}. 

\begin{definition}[Feed-Forward Neural Network and Multilayer Perceptron] \label{def:feedforward}
	For $k \geq 0$ and $\mathcal{X}\subset \R^{\nin}$ open, the set $\FNN \subset C^k(\mathcal{X},\R^{\nout})$, $\delta>0$,  denotes all FNNs $\overline{\Phi}: \mathcal{X} \rightarrow \R^{\nout}$ as defined in~\eqref{eq:mlp} with layer map $f_l (\cdot,\theta_l)\in C^k(\R^{n_{l-1}},\R^{n_l})$ for each fixed $\theta_l \in \Theta_l \subset \R^{p_l}$, $l\in \{0,\ldots,L+1\}$. 

    In the canonical case, the set $\MLP \subset C^k(\mathcal{X},\R^{\nout})$, $\delta>0$, denotes all MLPs $\overline{\Phi}: \mathcal{X} \rightarrow \R^{\nout}$ with layer map as defined in~\eqref{eq:updaterule_mlp} satisfying $\sigma_l\in C^{k}(\R,\R)$ for $l \in \{0,\ldots,L+1\}$.
\end{definition}

In Section~\ref{sec:mlp_relationship}, we discuss how FNNs are related to ResNets with $\eps = 0$, and how MLPs connect to canonical ResNets with $\eps = 0$. \medskip

Depending on the dimensions $n_l$ of the layers $h_l \in \R^{n_l}$ and the dimensions~$m_l$ of the pre-activated states $a_l \in \R^{m_l}$, we define non-augmented FNNs and MLPs as follows.

\begin{definition}[Non-Augmented FNNs and MLPs] \label{def:mlp_classes}
	The classes $\FNN$ and $\MLP$, $k\geq 0$, $\mathcal{X}\subset \R^{\nin}$ open, are subdivided as follows:
	\begin{itemize}
		\item \emph{Non-Augmented FNN} $\overline{\Phi} \in \textup{FNN}^k_{\delta,\textup{N}}(\mathcal{X},\R^{\nout})$: it holds $n_{l-1} \geq n_{l} $ for $l \in \{0,\ldots, L+1\}$. 
        \item \emph{Non-Augmented MLP} $\overline{\Phi} \in \textup{MLP}^k_{\delta,\textup{N}}(\mathcal{X},\R^{\nout})$: it holds $n_{l-1} \geq m_{l} \geq n_{l} $ for $l \in \{0,\ldots, L+1\}$.
	\end{itemize}
\end{definition}

The concept of non-augmented FNNs and MLPs is visualized in Figure~\ref{fig:MLPclassification}.

\begin{figure}[h]
	\centering
     \hspace{3mm}
    \begin{subfigure}{0.38\textwidth}
		\centering
		\begin{overpic}[scale = 0.22,tics=10]
			{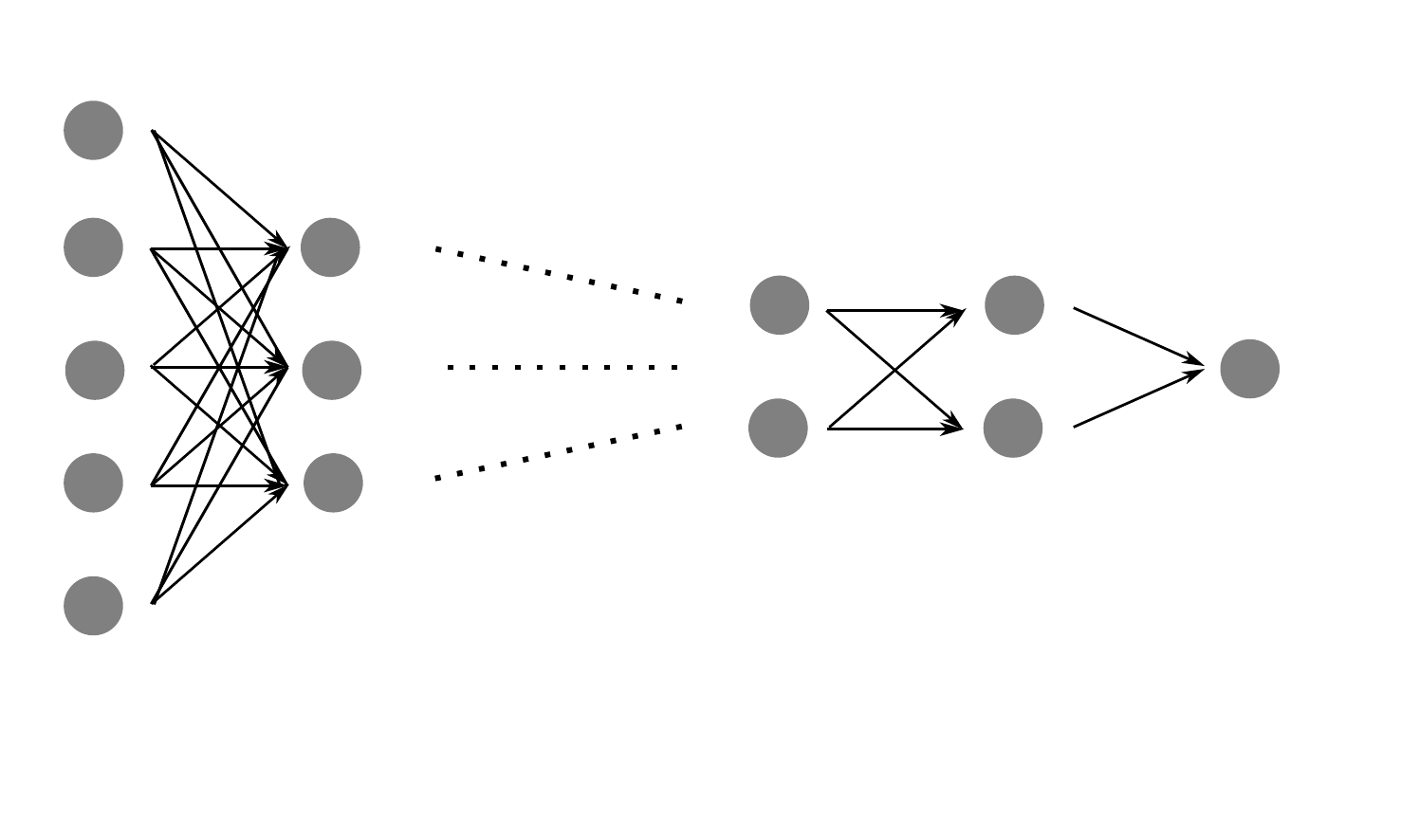}
			\put(5,2){\textcolor{black!70}{$x$}}
			\put(23,2){\textcolor{black!70}{$h_0$}}
            \put(51,2){\textcolor{black!70}{$h_{L-1}$}}
			\put(70,2){\textcolor{black!70}{$h_L$}}
			\put(86,2){\textcolor{black!70}{$h_{L+1}$}}
		\end{overpic}
		\caption{Example of a non-augmented FNN $\overline{\Phi} \in \textup{FNN}^k_{\delta,\textup{N}}(\R^5,\R)$ with layers $h_l \in \R^{n_l}$.}
		\label{fig:FNN_nonaug}
	\end{subfigure}
    \hspace{5mm}
	\begin{subfigure}{0.5\textwidth}
		\centering
		\begin{overpic}[scale = 0.22,tics=10]
			{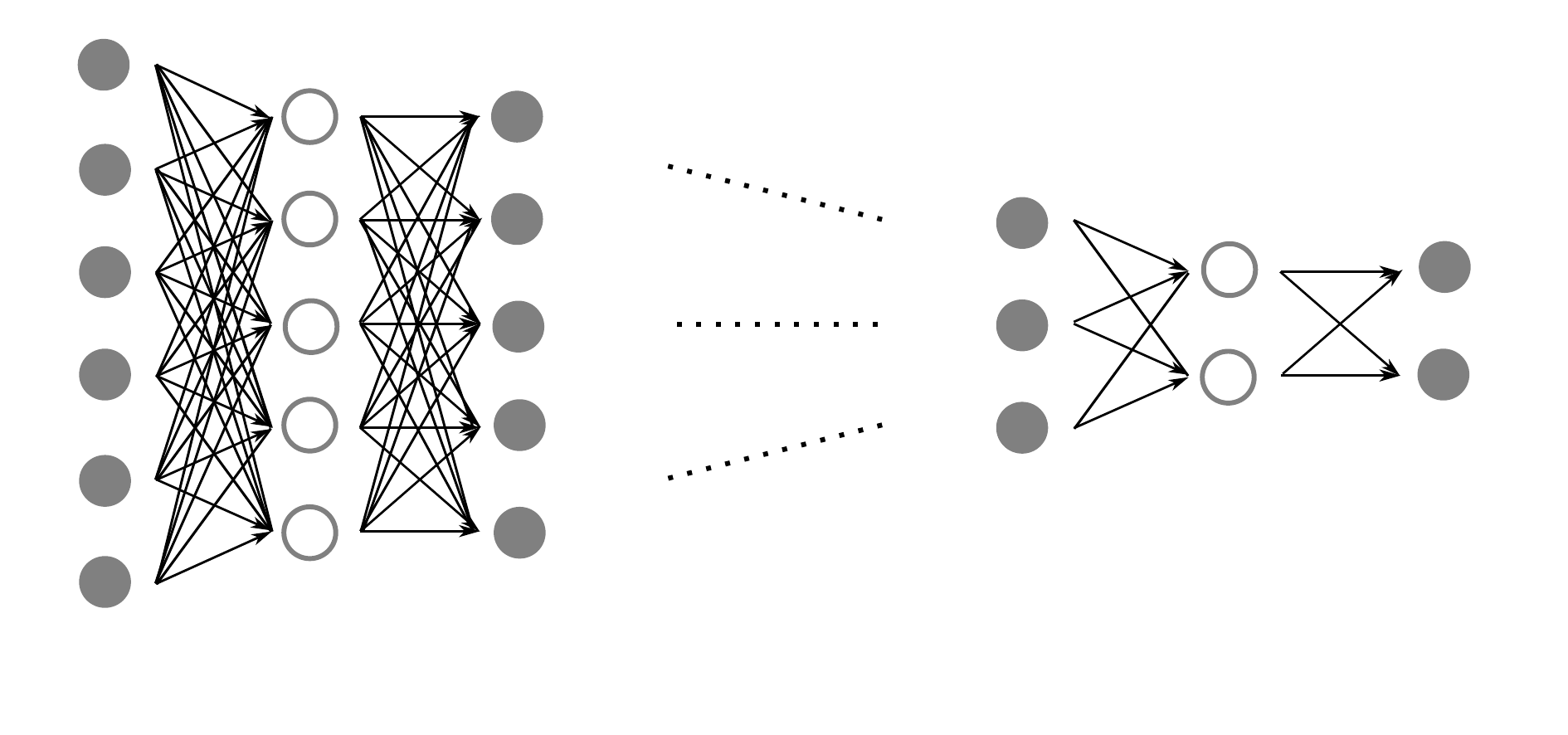}
			\put(5,2){\textcolor{black!70}{$x$}}
			\put(18,2){\textcolor{black!70}{$a_0$}}
			\put(31,2){\textcolor{black!70}{$h_0$}}
			\put(63,2){\textcolor{black!70}{$h_L$}}
			\put(75,2){\textcolor{black!70}{$a_{L+1}$}}
			\put(89,2){\textcolor{black!70}{$h_{L+1}$}}
		\end{overpic}
		\caption{Example of a non-augmented MLP $\overline{\Phi} \in \textup{MLP}^k_{\delta,\textup{N}}(\R^6,\R^2)$ with $h_l \in \R^{n_l}$ in gray and $a_l \in \R^{m_l}$ in white.}
		\label{fig:MLP_nonaug}
	\end{subfigure}
     \hspace{3mm}
	\caption{Visualization of the monotonically decreasing structure of non-augmented FNNs and MLPs.}
	\label{fig:MLPclassification}
\end{figure}

\begin{remark}
	The definition of a non-augmented FNN or MLP is more restrictive than that of a non-augmented ResNet: for MLPs, the dimensions of the input $x\in\R^{\nin}$, all layers $h_l \in \R^{n_l}$, and pre-activated states $a_l \in \R^{m_l}$ have to be monotonically decreasing. In the case of ResNets, it is only necessary that the input dimension $\nin$ is larger than or equal to the hidden dimension $\nhid$; the dimension $m_l$ of the pre-activated states is not relevant in our context.
\end{remark}

\begin{remark}
   Unlike ResNets, FNNs admit variable layer widths $n_l$. Architectures that violate the monotonicity requirements of Definition~\ref{def:mlp_classes} can be further classified as \emph{augmented}, \emph{bottleneck}, or \emph{degenerate}, as discussed in~\cite{kk2025}. Since our focus is on non-augmented architectures, we omit formal definitions of the alternative structures.
\end{remark}

		While FNNs allow for arbitrary layer maps, we now focus on the algebraic structure of MLPs. 

\begin{remark}
	The general update rule~\eqref{eq:updaterule_mlp} includes as special cases the following two typical MLP update rules: if $m_l = n_l$, $\widetilde{W}_l = \textup{Id}_{n_l}$ and $\tilde{b}_l = 0$, MLPs with an ``outer nonlinearity'' are obtained, i.e., $h_{l} = \delta \sigma_l(W_l h_{l-1}+b_l)$, and if $m_l = n_{l-1}$, $W_l = \textup{Id}_{n_{l-1}}$ and $b_l = 0$ for $l \in \{0,\ldots,L+1\}$, the update rule simplifies to MLPs with an ``inner nonlinearity'', i.e., $h_{l} = \delta (\widetilde{W}_l\sigma_l(h_{l-1})+\tilde{b}_l)$.
\end{remark}

\begin{remark}
    The general update rule~\eqref{eq:updaterule_mlp} includes two affine transformations that can always be reduced to a single one per layer without changing the input-output map: the transformation $\sigma_l(a_l) \mapsto a_{l+1}$ amounts to
	\begin{equation}\label{eq:activatedstates}
		a_{l+1} = W_{l+1} h_l + b_{l+1} = W_{l+1}\left(\delta\left(\widetilde{W}_l \sigma_l(a_l) + \tilde{b}_l\right)\right) + b_{l+1} = \delta W_{l+1}\widetilde{W}_l \sigma_l(a_l) + \delta W_{l+1}\tilde{b}_l + b_{l+1},
	\end{equation}
	which is a single affine linear transformation of the activated state $\sigma_l(a_l)$. Nevertheless, we choose the update rule~\eqref{eq:updaterule_mlp}, as it allows us to express the structure of $\overline{\Phi}$ with only full-rank weight matrices, see Theorem~\ref{th:mlp_normalform}.
\end{remark}

Given an MLP, the following theorem allows us to always suppose that Assumption~\ref{ass:B_weights_fullrank} holds, i.e., that all weight matrices have full rank. 

\begin{theorem}[MLP Normal Form  \cite{kk2025}] \label{th:mlp_normalform}
	Let $\overline{\Phi}\in \MLP$, $k \geq 0$, $\mathcal{X}\subset \R^{\nin}$ open, be an MLP, where the weight matrices $W_0$ and $\widetilde{W}_{L+1}$ have full rank. Then $\overline{\Phi}$ is equivalent to an MLP $\widetilde{\Phi}\in \textup{MLP}^k(\mathcal{X},\R^{\nout})$ with only full rank weight matrices, i.e., $\overline{\Phi}(x) = \widetilde{\Phi}(x)$ for all $x \in \mathcal{X}$. $\widetilde{\Phi}$ is called the \emph{normal form} of $\overline{\Phi}$.
\end{theorem}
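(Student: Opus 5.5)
We will construct the normal form by induction over the layers, removing rank deficiencies one layer at a time without changing the input-output map. The only freedom needed is to rewrite the affine map between consecutive activated states. By \eqref{eq:activatedstates}, the map $\sigma_l(a_l)\mapsto a_{l+1}$ is the single affine map with linear part $\delta W_{l+1}\widetilde{W}_l$. Any factorization of this product (together with a matching split of the bias) yields the same network.

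\emph{Rank reduction.} For a layer where $\widetilde{W}_l$ or $W_{l+1}$ is rank deficient, set $r\coloneqq\textup{rank}(W_{l+1}\widetilde{W}_l)$. Take a rank factorization
\[
W_{l+1}\widetilde{W}_l=BC,\qquad B\in\R^{m_{l+1}\times r},\quad C\in\R^{r\times m_l},
\]
with both factors of full rank $r$, for instance from a singular value decomposition. Then replace the intermediate layer $h_l\in\R^{n_l}$ by $h_l'\in\R^{r}$, with $\widetilde{W}_l'\coloneqq C$, $W_{l+1}'\coloneqq B$, $\tilde b_l'\coloneqq 0$ and $b_{l+1}'\coloneqq \delta W_{l+1}\tilde b_l+b_{l+1}$. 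By \eqref{eq:activatedstates} the pre-activation $a_{l+1}$ is unchanged for every input, so the composite map is unchanged.

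\emph{Inner matrices.} We apply this replacement successively for $l=0,\ldots,L$. All inner weight matrices $\widetilde{W}_0,W_1,\ldots,\widetilde{W}_L,W_{L+1}$ then have full rank. The matrices $W_0$ and $\widetilde{W}_{L+1}$ are never touched and have full rank by hypothesis.

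\emph{Main obstacle.} The difficulty is that the procedure must terminate and each step must not destroy full rank obtained in earlier steps. Each replacement changes only the pair $(\widetilde{W}_l,W_{l+1})$, so full rank at other indices is preserved. We will carry out the sweep once, in increasing $l$; since every pair is treated exactly once, a single pass suffices.

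We also need the hypothesis on $W_0$ and $\widetilde{W}_{L+1}$: the outer dimensions $n_{-1}=\nin$ and $n_{L+1}=\nout$ must stay fixed, and these two matrices have no neighbouring factor to absorb a rank deficiency. The reduced widths $r$ may differ from the original $n_l$. This is allowed because the normal form is only required to lie in $\textup{MLP}^k(\mathcal{X},\R^{\nout})$, not in the non-augmented class. Regularity $C^k$ is unaffected, since the activation functions are unchanged.
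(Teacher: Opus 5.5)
Your proposal is correct and follows essentially the route the paper relies on via \cite[Theorem 3.10]{kk2025}: each inner product $W_{l+1}\widetilde{W}_l$ in \eqref{eq:activatedstates} is replaced by a full-rank factorization through a layer of possibly smaller width, with $\delta W_{l+1}\tilde{b}_l$ absorbed into $b_{l+1}$, and $W_0$ and $\widetilde{W}_{L+1}$ are covered by the hypothesis. One point to make explicit is the degenerate case $W_{l+1}\widetilde{W}_l = 0$: there $r=0$ gives a layer of width zero and the network is constant, so you should either allow that convention or treat this case separately.
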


\begin{proof}
	The result is a special case of~\cite[Theorem 3.10]{kk2025} under the additional assumption that $W_0$ and $\widetilde{W}_{L+1}$ have full rank.
\end{proof}

The assumption that $W_0$ and $\widetilde{W}_{L+1}$ have full rank is analogous to Assumption~\ref{ass:C_input_output} that the Jacobian matrices of the input and output transformation have full rank. The proof of \cite[Theorem 3.10]{kk2025} replaces the matrix product $W_{l+1}\widetilde{W}_l$ in~\eqref{eq:activatedstates} by a product of full rank matrices of possibly smaller dimensions. For ResNet architectures, it is not possible to define a normal form in the same way. 

\begin{remark}
    It is important to note that the classes (non-augmented, augmented, or bottleneck) of an MLP $\overline{\Phi} \in \textup{MLP}^k(\mathcal{X},\R^{\nout})$ and its normal form $\widetilde{\Phi}$ (cf.~Theorem~\ref{th:mlp_normalform}) are not necessarily the same. A non-augmented or augmented MLP with singular weight matrices can be equivalent to an MLP in normal form with a bottleneck. The MLP results of Section~\ref{sec:mlp_criticalpoints} are stated for MLPs in normal form, where all weight matrices have full rank and hence fulfill Assumption~\ref{ass:B_weights_fullrank}.
\end{remark}

\subsection{Relationship to ResNets}
\label{sec:mlp_relationship} 

In the following proposition, we show that given a ResNet $\Phi \in \textup{RN}_{0,\delta}^k(\mathcal{X},\R^{\nout})$, $\mathcal{X}\subset \R^{\nin}$ with skip parameter $\eps = 0$, there exists an FNN $\overline{\Phi} \in \FNN$, that has the same input-output map as~$\Phi$. Conversely, the statement also holds if the FNN's hidden layers have a constant width. In the previous section, we introduced FNNs with $L+2$ layers to maintain the analogy with ResNets, which have, in addition to the $L$ hidden layers, one input and one output transformation.

\begin{proposition}[Relationship between ResNets and FNNs] \label{prop:resnet_mlp}
	Let $\mathcal{X}\subset \R^{\nin}$ open, $k\geq 0$ and $\delta>0$. 
	\begin{enumerate}[label=(\alph*), font=\normalfont]
		\item \label{prop:resnet_mlp_a} For every ResNet $\Phi \in \textup{RN}_{0,\delta}^k(\mathcal{X},\R^{\nout})$, there exists an FNN $\overline{\Phi} \in \FNN$ with the same input-output map as $\Phi$, i.e., $\Phi(x) = \overline{\Phi}(x)$ for all $x \in \mathcal{X}$.
		\item \label{prop:resnet_mlp_b} For every FNN $\overline{\Phi} \in \FNN$, where all hidden layers $h_l$ have the same dimension $n_l = \nhid $ for $l \in \{0,1,\ldots,L\}$, there exists a ResNet $\Phi \in \textup{RN}_{0,\delta}^k(\mathcal{X},\R^{\nout})$ with the same input-output map, i.e., $\Phi(x) = \overline{\Phi}(x)$ for all $x \in \mathcal{X}$.
	\end{enumerate}
    If the input and output transformations $\lambda$, $\tilde{\lambda}$ have the typical form~\eqref{eq:lambda_typical}, the statements holds as a special case for canonical ResNets $\Phi \in \textup{RN}_{0,\delta,\sigma}^k(\mathcal{X},\R^{\nout})$ and MLPs $\overline{\Phi}\in\MLP$.
\end{proposition}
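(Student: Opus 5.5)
The plan is to prove both directions by matching layers one-to-one and absorbing the input and output transformations into the first and last FNN layers. With $\eps = 0$, the ResNet update rule~\eqref{eq:resnet_updaterule} reduces to $h_l = \delta f_l(h_{l-1},\theta_l)$ for $l\in\{1,\ldots,L\}$. This is exactly the FNN update rule with layer maps $f_l$ acting between layers of constant width $\nhid$. What remains is to write the transformations $\lambda$ and $\tilde\lambda$ in the form $h_0 = \delta f_0(x,\theta_0)$ and $h_{L+1} = \delta f_{L+1}(h_L,\theta_{L+1})$.

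For part~\ref{prop:resnet_mlp_a}, I would choose
\begin{equation*}
f_0(x,\theta_0) \coloneqq \tfrac{1}{\delta}\lambda(x), \qquad f_{L+1}(y,\theta_{L+1}) \coloneqq \tfrac{1}{\delta}\tilde\lambda(y),
\end{equation*}
with trivial (empty or constant) parameters. I would also keep $f_l$ and $\theta_l$ unchanged for $1\le l\le L$, and set $n_{-1}=\nin$, $n_l=\nhid$ for $0\le l\le L$, and $n_{L+1}=\nout$.

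The regularity requirement follows from Definition~\ref{def:resnet}, since $\lambda,\tilde\lambda\in C^k$ and $f_l(\cdot,\theta_l)\in C^k$. An induction on $l$ shows that the FNN layers coincide with $\lambda(x), h_1,\ldots,h_L$, and $\Phi(x)$. Because $\delta>0$, the rescaling by $1/\delta$ is harmless.

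For part~\ref{prop:resnet_mlp_b}, I reverse the construction. I set
\begin{equation*}
\lambda(x) \coloneqq \delta f_0(x,\theta_0), \qquad \tilde\lambda(y) \coloneqq \delta f_{L+1}(y,\theta_{L+1}),
\end{equation*}
and take the ResNet residual functions to be the FNN maps $f_l$ for $1\le l\le L$. The assumption $n_l=\nhid$ for $0\le l\le L$ is what makes these maps go from $\R^{\nhid}$ to $\R^{\nhid}$, so they are admissible residual functions. The same induction then gives $\Phi=\overline\Phi$.

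For the canonical/MLP statement, the hidden maps are already of the form~\eqref{eq:resnet_typical_f} and~\eqref{eq:updaterule_mlp}, so only the boundary layers need care.

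In direction (a), $\lambda$ and $\tilde\lambda$ have the form~\eqref{eq:lambda_typical}. Dividing by $\delta$ gives weights $\widetilde W_0/\delta$ and bias $\tilde b_0/\delta$, which is again an MLP layer; the same holds for $\tilde\lambda$.

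In direction (b), multiplying an MLP layer by $\delta$ gives $\delta\widetilde W_0\sigma_0(W_0x+b_0)+\delta\tilde b_0$, which is of the form~\eqref{eq:lambda_typical}.

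There is no genuine obstacle here. The only point to check is bookkeeping: the FNN index set runs from $0$ to $L+1$, and the ResNet only allows the hidden width $\nhid$ to be constant.
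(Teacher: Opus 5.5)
Your proposal is correct and matches the paper's proof: the paper uses the same boundary layers $\bar f_0 = \frac{1}{\delta}\lambda$ and $\bar f_{L+1} = \frac{1}{\delta}\tilde\lambda$, keeps the hidden maps unchanged, and reverses the construction for part (b) with $\lambda = \delta \bar f_0$ and $\tilde\lambda = \delta \bar f_{L+1}$. Your explicit rescaling $\widetilde{W}_0/\delta$, $\tilde b_0/\delta$ in the canonical case is just a more detailed version of the paper's one-line remark there.
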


The proof of this proposition only relies on the translation of the notation between ResNets and FNNs. It is therefore proven in Appendix~\ref{app:proof_resnet_mlp} and included for completeness. Proposition~\ref{prop:resnet_mlp} shows that the ResNet and FNN dynamics coincide in the case $\eps = 0$, which relates to the limit $\alpha \coloneqq \frac{\delta}{\eps} \rightarrow \infty$, as $\eps \rightarrow 0$ for fixed $\delta>0$, as also discussed in Section~\ref{sec:resnet_gradient} and Section~\ref{sec:resnet_tunnel}. \medskip

In the following, we aim to quantify the distance between FNNs and ResNets with a small skip parameter $0< \eps < 1$. For this purpose, we first calculate the explicit input-output map of ResNets. Although Definition~\ref{def:resnet} assumes $\eps,\delta>0$, the following lemma includes the limiting cases $\eps = 0$ and $\delta = 0$.

\begin{lemma}[ResNet Input-Output Map]\label{lem:resnet_inputoutput}
	For a ResNet $\Phi \in \textup{RN}^0_{\eps,\delta}(\mathcal{X},\R^{\nout})$, $\mathcal{X}\subset \R^{\nin}$ open, with $\eps,\delta \geq 0$, for the $l$-th layer it holds that
    \begin{equation}\label{eq:inputoutput}
        h_l = \eps^l \lambda(x) + \delta \sum_{j = 1}^{l} \eps^{l-j}f_j(h_{j-1},\theta_j), \qquad l \in \{1,\ldots,L\}.
    \end{equation}
\end{lemma}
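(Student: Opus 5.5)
The plan is to prove formula~\eqref{eq:inputoutput} by induction on the layer index $l$, using only the update rule~\eqref{eq:resnet_updaterule} together with $h_0 = \lambda(x)$. Nothing about the regularity of $f_j$ or about the sign or size of $\eps,\delta$ enters. The argument is purely algebraic, so it applies verbatim to $\eps,\delta\geq 0$, including the limiting cases $\eps=0$ and $\delta=0$. For $\eps = 0$ we adopt the convention $0^0 = 1$, which is needed so that the term $j=l$ in the sum (with exponent $l-j=0$) survives.

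For the base case $l=1$, the update rule gives directly $h_1 = \eps h_0 + \delta f_1(h_0,\theta_1) = \eps^1\lambda(x) + \delta\,\eps^{0} f_1(h_0,\theta_1)$. This is exactly~\eqref{eq:inputoutput} for $l=1$.

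For the induction step, assume~\eqref{eq:inputoutput} holds for some $l-1 \in \{1,\ldots,L-1\}$. Insert it into $h_l = \eps h_{l-1} + \delta f_l(h_{l-1},\theta_l)$ to obtain
\begin{equation*}
h_l = \eps\Bigl(\eps^{l-1}\lambda(x) + \delta\sum_{j=1}^{l-1}\eps^{l-1-j} f_j(h_{j-1},\theta_j)\Bigr) + \delta f_l(h_{l-1},\theta_l).
\end{equation*}
Distributing $\eps$ raises every power by one, giving $\eps^l\lambda(x) + \delta\sum_{j=1}^{l-1}\eps^{l-j}f_j(h_{j-1},\theta_j)$. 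The remaining term $\delta f_l(h_{l-1},\theta_l)$ is precisely the $j=l$ summand $\delta\,\eps^{0} f_l(h_{l-1},\theta_l)$, which completes the sum up to $j=l$.

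There is no genuine obstacle in this argument. The only point requiring care is the $0^0=1$ convention in the case $\eps=0$. With it, the formula correctly reduces to $h_l = \delta f_l(h_{l-1},\theta_l)$, consistent with Proposition~\ref{prop:resnet_mlp}. One should also note that the $h_{j-1}$ on the right-hand side are themselves defined recursively, so~\eqref{eq:inputoutput} is an unrolled identity rather than a closed form.
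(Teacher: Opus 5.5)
Your proof is correct and follows the paper's argument: induction on $l$, with the base case read off from the update rule and $h_0=\lambda(x)$, and the induction step obtained by multiplying the hypothesis for $h_{l-1}$ by $\eps$ and absorbing $\delta f_l(h_{l-1},\theta_l)$ as the $j=l$ summand. Your explicit remark on the convention $0^0=1$ for $\eps=0$ is a worthwhile addition that the paper leaves implicit.
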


\begin{proof}
    The statement follows by induction on the update rule~\eqref{eq:resnet_updaterule}. For $l=1$, it holds $$h_1 = \eps h_0 + \delta f_1(h_0,\theta_1) = \eps \lambda(x) + \delta f_1(\lambda(x),\theta_1), $$ which matches~\eqref{eq:inputoutput}. For $l \in \{2,\ldots,L\}$, assume that formula~\eqref{eq:inputoutput} holds for $l-1$, then we conclude for layer $l$:
    \begin{align*}
        h_l &= \eps h_{l-1} + \delta f_l(h_{l-1}, \theta_l) \\
        &= \eps \left( \eps^{l-1} \lambda(x) + \delta \sum_{j = 1}^{l-1} \eps^{l-1-j}f_j(h_{j-1}, \theta_j)\right) + \delta f_l(h_{l-1}, \theta_l) \\
        &= \eps^l \lambda(x) + \delta \sum_{j = 1}^{l} \eps^{l-j}f_j(h_{j-1}, \theta_j). \qedhere
    \end{align*}
\end{proof}

\begin{remark} \label{rem:scaling_asymmetry}
    Lemma~\ref{lem:resnet_inputoutput} reveals a fundamental asymmetry in how the parameters $\eps$ and $\delta$ influence the ResNet input-output map. The residual parameter $\delta$ acts as a uniform global multiplier for every residual function $f_l$. In contrast, the skip parameter $\eps$ induces a depth-dependent exponential scaling: the contribution of the $l$-th hidden layer is scaled by $\eps^{L-l}$, and the input $\lambda(x)$ is scaled by $\eps^L$. 
    Consequently, for $0 \leq \eps < 1$, the contribution from earlier layers decays exponentially as it propagates toward the output.
    This phenomenon is illustrated for low-dimensional examples in Section~\ref{subsubsec:1d2l}.
\end{remark}

The following theorem shows that the global error of the input-output map between ResNets and FNNs scales linearly in the skip parameter $\eps$ asymptotically for $\eps \rightarrow 0$ and fixed $\delta>0$ and $L>0$. 

\begin{theorem}[Approximation Error between ResNets and FNNs] \label{th:mlp_resnet_error}   
	Let $0<\eps<1$ and $\delta>0$ and consider a ResNet $\Phi \in \textup{RN}^0_{\eps,\delta}(\mathcal{X},\R^{\nout})$, $\mathcal{X}\subset \R^{\nin}$ open, with update rule
	\begin{equation*}
		h_l = \eps h_{l-1} + \delta f_l(h_{l-1},\theta_l), \qquad l \in \{1,\ldots,L\},
	\end{equation*}
    parameters $\theta_l\in \Theta_l \subset \R^{p_l}$, input transformation $\lambda:\R^{\nin} \rightarrow \R^{\nhid}$ and output transformation $\tilde{\lambda}: \R^{\nhid} \rightarrow \R^{\nout}$.
	Furthermore, consider the corresponding FNN $\overline{\Phi} \in \textup{FNN}^0_{\delta}(\mathcal{X},\R^{\nout})$ with update rule
	\begin{equation*}
		h_l = \delta f_l(h_{l-1},\theta_l), \qquad l \in \{1,\ldots,L\},
	\end{equation*}
	and $h_0 = \lambda(x)$, $h_{L+1} = \tilde{\lambda}(h_L)$. Assume that:
	\begin{itemize}
		\item The residual functions $f_l$ are globally bounded, i.e., there exists a constant $S_f>0$, such that $\norm{f_l(h_{l-1},\theta_l)}_\infty \leq S_f$ for all $h_{l-1} \in \R^{n_{l-1}}$, $\theta_l \in \Theta_l$ and $l \in \{1,\ldots,L\}$. 
        \item The residual functions $f_l$ are globally Lipschitz continuous with Lipschitz constant $K_f$ for all $\theta_l\in\Theta_l$ and $l \in \{1,\ldots,L\}$.  
        \item The input transformation $\lambda$ is globally bounded by the constant $S_\lambda>0$, i.e., $\norm{\lambda}_{\infty,\mathcal{X}}\leq S_\lambda$.
        \item The output transformation $\tilde{\lambda}$ is globally Lipschitz continuous with Lipschitz constant $K_{\tilde{\lambda}}$.
	\end{itemize}
	Then it holds for the global approximation error between the ResNet $\Phi$ and the FNN $\overline{\Phi}$ that
    \begin{equation}\label{eq:error_resnet_mlp}
	   \norm{\Phi-\overline{\Phi}}_{\infty,\mathcal{X}} \leq \eps \cdot K_{\tilde{\lambda}} \left(  (\delta K_f)^{L-1} S_\lambda + (S_\lambda +\delta S_f)  \sum_{j = 0}^{L-2} (\delta K_f)^j\right)  + \mathcal{O}(\eps^2)
    \end{equation}
    as $\eps \rightarrow 0$.
\end{theorem}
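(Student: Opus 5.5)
We compare the hidden layers of the two networks directly, layer by layer, and then pass the final difference through the Lipschitz output map. Write $h_l$ for the ResNet layers and $\bar h_l$ for the FNN layers, with the common initial layer $h_0=\bar h_0=\lambda(x)$. Set $e_l \coloneqq \norm{h_l-\bar h_l}_\infty$, so that $e_0=0$.

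\textbf{Step 1: a one-step recursion.} Subtracting the two update rules gives
\begin{equation*}
h_l-\bar h_l = \eps h_{l-1} + \delta\bigl(f_l(h_{l-1},\theta_l)-f_l(\bar h_{l-1},\theta_l)\bigr).
\end{equation*}
The Lipschitz assumption on $f_l$ then yields
\begin{equation*}
e_l \le \eps\norm{h_{l-1}}_\infty + \delta K_f\, e_{l-1}.
\end{equation*}

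\textbf{Step 2: bounding the skip term.} We control $\norm{h_{l-1}}_\infty$ uniformly using Lemma~\ref{lem:resnet_inputoutput}, which gives
\begin{equation*}
\norm{h_{l-1}}_\infty \le \eps^{l-1}S_\lambda + \delta S_f\sum_{j=1}^{l-1}\eps^{l-1-j}.
\end{equation*}
Since $\eps<1$, this is at most $\eps^{l-1}S_\lambda+\delta S_f(1+\mathcal{O}(\eps))$. In particular:
\begin{itemize}
\item $\norm{h_0}_\infty\le S_\lambda$;
\item for $l\ge 2$, $\norm{h_{l-1}}_\infty \le \delta S_f + \mathcal{O}(\eps)$.
\end{itemize}
Equivalently, to first order we may use the cruder but convenient bound $\norm{h_{l-1}}_\infty\le S_\lambda+\delta S_f+\mathcal{O}(\eps)$ for $l\ge2$. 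This matches the shape of the target estimate~\eqref{eq:error_resnet_mlp}.

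\textbf{Step 3: unrolling the recursion.} Unrolling from $e_0=0$ gives
\begin{equation*}
e_L\le \eps\sum_{l=1}^{L}(\delta K_f)^{L-l}\norm{h_{l-1}}_\infty.
\end{equation*}
We then insert the bounds from Step~2.
\begin{itemize}
\item The $l=1$ term contributes $\eps(\delta K_f)^{L-1}S_\lambda$.
\item The terms $l=2,\ldots,L$ contribute $\eps\,(S_\lambda+\delta S_f)\sum_{j=0}^{L-2}(\delta K_f)^j$. Here we reindex with $j=L-l$ and absorb the $\eps\cdot\mathcal{O}(\eps)$ remainders into $\mathcal{O}(\eps^2)$.
\end{itemize}

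\textbf{Step 4: output map.} Finally,
\begin{equation*}
\norm{\Phi(x)-\overline{\Phi}(x)}_\infty=\normm{\tilde\lambda(h_L)-\tilde\lambda(\bar h_L)}_\infty\le K_{\tilde\lambda}e_L.
\end{equation*}
All the bounds above are uniform in $x\in\mathcal{X}$, so taking the supremum gives~\eqref{eq:error_resnet_mlp}.

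\textbf{Main obstacle.} The only delicate point is the bookkeeping in Step~2. The constants hidden in the $\mathcal{O}(\eps^2)$ term must depend only on $L,\delta,S_f,S_\lambda,K_f,K_{\tilde\lambda}$ and not on $x$ or on the parameters. We also have to match exactly the grouping $(S_\lambda+\delta S_f)$ in the stated sum. Using the generous bound $\norm{h_{l-1}}_\infty\le \eps^{l-1}S_\lambda+\delta S_f/(1-\eps)$ and expanding $1/(1-\eps)=1+\mathcal{O}(\eps)$ handles both issues. Any slack, such as replacing $\eps^{l-1}S_\lambda$ by $S_\lambda$, only weakens the bound and is therefore harmless.
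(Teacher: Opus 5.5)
Your proof is correct and follows essentially the same route as the paper: the layerwise recursion $e_l \le \eps\norm{h_{l-1}}_\infty + \delta K_f\, e_{l-1}$, a uniform bound $S_\lambda + \delta S_f/(1-\eps)$ on the ResNet states via Lemma~\ref{lem:resnet_inputoutput}, unrolling, the Lipschitz output map, and expanding $1/(1-\eps)$ at the end. Your side remark is also right but not needed for the stated bound: for $l\ge 2$ the term $\eps^{l-1}S_\lambda$ contributes only at order $\eps^2$, so the $S_\lambda$ inside the sum could even be dropped at leading order.
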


\begin{proof}
	Let $x\in \mathcal{X}$ be arbitrary. To estimate $\normk{\Phi-\overline{\Phi}}_{\infty,\mathcal{X}}$, we first bound the distance between the hidden states of the ResNet, denoted by $h_l^{\textup{RN}}$, and the hidden states of the FNN, denoted by $h_l^{\textup{FNN}}$ for $l \in \{1,\ldots,L\}$. The pointwise error at layer $l$ is defined to be $\Delta_l \coloneqq \norm{h_l^{\textup{RN}}-h_l^{\textup{FNN}}}_\infty$. 
    For the first hidden layer it holds 
    \begin{equation*}
        \Delta_1 = \norm{h_1^{\textup{RN}}-h_1^{\textup{FNN}}}_\infty = \norm{\eps \lambda(x) + \delta f_1(\lambda(x),\theta_1) -\delta f_1(\lambda(x),\theta_1)}_\infty = \norm{\eps \lambda(x)}_\infty \leq \eps S_\lambda.
    \end{equation*}
    To determine $\Delta_l$ for $l \in \{2, \ldots, L\}$, we first estimate with Lemma~\ref{lem:resnet_inputoutput} :
    \begin{align*}
        \norm{h_l^{\textup{RN}}}_\infty &= \norm{\eps^l \lambda(x) + \delta \sum_{j = 1}^{l} \eps^{l-j}f_j(h_{j-1}^{\textup{RN}},\theta_j)}_\infty \leq \eps^l \norm{\lambda(x)}_\infty + \delta \sum_{j=1}^{l} \eps^{l-j} \norm{f_j(h_{j-1}^{\textup{RN}},\theta_j)}_\infty \\
        &\leq \eps^l S_\lambda  + \delta S_f  \sum_{j=0}^{l-1} \eps^j 
        \leq S_\lambda  + \frac{\delta S_f}{1-\eps} \eqqcolon H,
    \end{align*}
    where we used the bound of the geometric series for $0 < \eps < 1$. It follows for $l \in \{2,\ldots,L\}$:
    \begin{align*}
        \Delta_l &= \norm{\eps h_{l-1}^{\textup{RN}} + \delta f_l(h_{l-1}^{\textup{RN}}, \theta_l) - \delta f_l(h_{l-1}^{\textup{FNN}}, \theta_l)}_\infty \\
        &\leq \eps \norm{h_{l-1}^{\textup{RN}}}_\infty + \delta \norm{f_l(h_{l-1}^{\textup{RN}}, \theta_l) - f_l(h_{l-1}^{\textup{FNN}}, \theta_l)}_\infty \leq \eps H + \delta K_f \Delta_{l-1},
    \end{align*}
    where we used the Lipschitz continuity of the residual functions $f_l$. As the bound of $\Delta_l$ depends linearly on $\Delta_{l-1}$, we can estimate
    \begin{equation*}
        \Delta_L \leq \eps H + \delta K_f \Delta_{L-1} \leq \ldots \leq (\delta K_f)^{L-1}\Delta_1 + \eps H \sum_{j = 0}^{L-2} (\delta K_f)^j.
    \end{equation*}
    Finally, we use the Lipschitz continuity of the output transformation $\tilde{\lambda}$ to obtain
    \begin{align}
        \norm{\Phi - \overline{\Phi}}_{\infty,\mathcal{X}} &= \sup_{x \in \mathcal{X}}\norm{\tilde{\lambda}(h_L^{\textup{RN}}) - \tilde{\lambda}(h_L^{\textup{FNN}})}_\infty 
        \leq K_{\tilde{\lambda}} \Delta_L \nonumber \\
        &\leq \eps \cdot K_{\tilde{\lambda}}\left( (\delta K_f)^{L-1} S_\lambda +  \left(S_\lambda  + \frac{\delta S_f}{1-\eps}\right) \sum_{j = 0}^{L-2} (\delta K_f)^j\right),  \label{eq:explicit_bound} 
    \end{align}
    such that the statement follows after expanding $\frac{1}{1-\eps}$ in $\eps$.
\end{proof}

\begin{remark} \label{rem:scaling_linear}
    The asymptotic bound in Theorem~\ref{th:mlp_resnet_error} is sharp to leading order in $0<\eps<1$. An explicit, non-asymptotic upper bound valid for all $0<\eps<1$ is given by equation~\eqref{eq:explicit_bound}. Furthermore, the asymptotic bound indicates a regular perturbation problem~\cite{KuehnBook} as $\varepsilon\rightarrow 0$, which is reasonable considering the Lipschitz assumptions on the nonlinear terms. Yet, even if these Lipschitz assumptions would be dropped, one could attempt to find an asymptotic bound using techniques for singularly perturbed iterated maps~\cite{JelbartKuehn}. This would be technically an extremely challenging extension. 
\end{remark}

\begin{remark}
    The global approximation error~\eqref{eq:error_resnet_mlp} depends on the product $\delta K_f$ and the depth $L$. We observe that for $\delta K_f < 1$, the error remains bounded, even for very deep networks with $L \gg 1$, as the sum $\sum_{j=0}^{L-2} (\delta K_f)^j$ converges. In this case, the ResNet is an approximation of the FNN with stable approximation error with respect to the depth. In the case $\delta K_f > 1$, the global approximation error grows exponentially with depth $L$.  This highlights that while the approximation is $\mathcal{O}(\eps)$ for any fixed $L$, the error grows as the network becomes deeper unless the residual functions are sufficiently contractive.
\end{remark}

We now consider Theorem~\ref{th:mlp_resnet_error} for canonical ResNets with the typical residual function~\eqref{eq:resnet_typical_f}. The proof consists of explicitly computing the constants $S_f$, $K_f$, $S_{\lambda}$ and $K_{\tilde{\lambda}}$ and is given in Appendix~\ref{app:cor_proof_resnet_mlp}.

\begin{corollary}[Approximation Error between Canonical ResNets and MLPs] \label{cor:mlp_resnet_error}   
	Let $0<\eps<1$ and $\delta>0$ and consider a canonical ResNet $\Phi \in \textup{RN}^0_{\eps,\delta,\sigma}(\mathcal{X},\R^{\nout})$, $\mathcal{X}\subset \R^{\nin}$ open, with update rule
    \begin{equation*}\label{eq:resnet_updaterule_distance}
		h_l = \eps h_{l-1} + \delta \left(\widetilde{W}_l \sigma_l(W_l h_{l-1} + b_l) + \tilde{b}_l\right), \qquad l \in \{1,\ldots,L\},
	\end{equation*}
	as defined in~\eqref{eq:resnet_typical_f}.
	Let the input and output transformations have the typical form \eqref{eq:lambda_typical} and let
    $(W_l,\widetilde{W}_l,b_l,\tilde{b}_l)\in \Theta_l \subset \R^{\nhid\times m_l} \times \R^{m_l \times \nhid} \times \R^{m_l} \times \R^{\nhid}$ for $l \in \{0,\ldots,L+1\}$.
	Furthermore, consider the corresponding $\overline{\Phi} \in \textup{MLP}^0_{\delta}(\mathcal{X},\R^{\nout})$ with update rule
	\begin{equation*}
		h_l = \delta\left(\widetilde{W}_l \sigma_l(W_l h_{l-1} + b_l) + \tilde{b}_l \right), \qquad l \in \{1,\ldots,L\},
	\end{equation*}
	and $h_0 = \lambda(x)$, $h_{L+1} = \tilde{\lambda}(h_L)$. Assume that:
	\begin{itemize}
		\item The activation functions $\sigma_{l}\in C^0(\R,\R)$, $l\in\{1,\ldots,L+1\}$ fulfill Assumption~\ref{ass:A_activation_lipschitz}, i.e., all $\sigma_{l}$, $l\in\{1,\ldots,L+1\}$ are Lipschitz continuous with Lipschitz constant $K_\sigma$.
		\item The activation functions $\sigma_{l}\in C^0(\R,\R)$, $l\in \{0,\ldots,L\}$ fulfill Assumption~\ref{ass:A_activation_bounded}, i.e., there exists a constant $S_\sigma > 0$ such that $\norm{\sigma_l}_{\infty,\R} \leq S_\sigma$ for all $l \in \{0,\ldots,L\}$.
		\item  The weight matrices $W_l$, $\widetilde{W}_l$, and the biases $\tilde{b}_l$ fulfill Assumption~\ref{ass:B_weights_bounded} with respect to the max-norm, i.e., there exist constants $\omega_\infty, \widetilde{\omega}_\infty, \tilde{\beta}_\infty \geq 0$, such that $\norm{W_l}_\infty\leq \omega_\infty$, $\normm{\widetilde{W}_l}_\infty \leq \widetilde{\omega}_\infty $ and $\normm{\tilde{b}_l}_\infty \leq \tilde{\beta}_\infty$ for all $(W_l,\widetilde{W}_l,b_l,\tilde{b}_l)\in \Theta_l$ and $l\in \{0,\ldots,L+1\}$.
	\end{itemize}
	Then the global approximation error~\eqref{eq:error_resnet_mlp} with $S_f = S_\lambda = \widetilde{\omega}_\infty S_\sigma + \tilde{\beta}_\infty$ and  $K_f = K_{\tilde\lambda} =  \widetilde{\omega }_\infty  K_{\sigma} \omega_\infty$ is given by
    \begin{equation*}
		\norm{\Phi-\overline{\Phi}}_{\infty,\mathcal{X}} \leq \eps \cdot  \left(\widetilde{\omega}_\infty S_\sigma + \tilde{\beta}_\infty\right) \cdot \widetilde{\omega }_\infty  K_{\sigma} \omega_\infty \cdot\left( (\delta \widetilde{\omega }_\infty  K_{\sigma} \omega_\infty)^{L-1}  +  (1+\delta) \cdot \sum_{j = 0}^{L-2} (\delta \widetilde{\omega }_\infty  K_{\sigma} \omega_\infty)^j\right)+\mathcal{O}(\eps^2)
	\end{equation*}
    as $\eps \rightarrow 0$.
\end{corollary}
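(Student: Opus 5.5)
The plan is to derive Corollary~\ref{cor:mlp_resnet_error} directly from Theorem~\ref{th:mlp_resnet_error}. This only requires checking its four hypotheses for the canonical residual functions~\eqref{eq:resnet_typical_f} and the typical transformations~\eqref{eq:lambda_typical}, and computing the constants $S_f$, $K_f$, $S_\lambda$ and $K_{\tilde\lambda}$ explicitly. After that, the constants are substituted into~\eqref{eq:error_resnet_mlp} and the expression is simplified.

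\textbf{Bound on $f_l$ and on $\lambda$.} For every $h\in\R^{\nhid}$ and $\theta_l\in\Theta_l$, the triangle inequality together with the induced max-norm gives
\[
\norm{f_l(h,\theta_l)}_\infty \leq \normm{\widetilde W_l}_\infty \norm{\sigma_l(W_lh+b_l)}_\infty + \normm{\tilde b_l}_\infty \leq \widetilde\omega_\infty S_\sigma + \tilde\beta_\infty .
\]
Here Assumption~\ref{ass:A_activation_bounded} is used for $l\in\{1,\ldots,L\}$. So $S_f=\widetilde\omega_\infty S_\sigma+\tilde\beta_\infty$. The input transformation $\lambda$ has the same structure with index $l=0$, and $\sigma_0$ is bounded by assumption, so the same computation gives $S_\lambda=\widetilde\omega_\infty S_\sigma+\tilde\beta_\infty$. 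This explains why the boundedness hypothesis is imposed for $l\in\{0,\ldots,L\}$.

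\textbf{Lipschitz constants of $f_l$ and $\tilde\lambda$.} For $h,h'\in\R^{\nhid}$ the bias $\tilde b_l$ cancels and $b_l$ drops out inside the activation. Since $\sigma_l$ acts component-wise, its component-wise Lipschitz bound transfers to the max-norm with the same constant. This gives
\[
\norm{f_l(h)-f_l(h')}_\infty \leq \widetilde\omega_\infty K_\sigma \norm{W_l(h-h')}_\infty \leq \widetilde\omega_\infty K_\sigma\omega_\infty\norm{h-h'}_\infty .
\]
The same argument applied to $\tilde\lambda$, with index $L+1$, yields $K_{\tilde\lambda}=\widetilde\omega_\infty K_\sigma\omega_\infty$. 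This explains why the Lipschitz hypothesis covers $l\in\{1,\ldots,L+1\}$. With these four constants, all hypotheses of Theorem~\ref{th:mlp_resnet_error} hold.

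\textbf{Substitution and simplification.} Since $S_\lambda=S_f$, the bracket in~\eqref{eq:error_resnet_mlp} becomes
\[
S_f\left((\delta K_f)^{L-1}+(1+\delta)\sum_{j=0}^{L-2}(\delta K_f)^j\right).
\]
Multiplying by $\eps K_{\tilde\lambda}$ with $K_{\tilde\lambda}=K_f$ gives exactly the stated bound, and the $\mathcal{O}(\eps^2)$ remainder is inherited from the theorem.

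The only point needing real care is the Lipschitz step for the vector-valued, component-wise activation in the max-norm. One has to confirm that $\norm{\sigma(a)-\sigma(a')}_\infty\leq K_\sigma\norm{a-a'}_\infty$; this holds because the maximum is taken coordinate-wise. One should also keep the indexing straight: the boundedness hypothesis is used for $\lambda$ and the residual layers, and the Lipschitz hypothesis for the residual layers and $\tilde\lambda$.
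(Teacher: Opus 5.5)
Your proposal is correct and matches the paper's proof: the paper also computes $S_f=S_\lambda=\widetilde{\omega}_\infty S_\sigma+\tilde{\beta}_\infty$, treats $\tilde{\lambda}$ as $f_{L+1}$ to get $K_f=K_{\tilde{\lambda}}=\widetilde{\omega}_\infty K_\sigma\omega_\infty$, and substitutes these into~\eqref{eq:error_resnet_mlp}. Your simplification $S_\lambda+\delta S_f=(1+\delta)S_f$ and your remark that the coordinate-wise Lipschitz bound carries over to the max-norm only spell out steps the paper leaves implicit.
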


\subsection{Existence of Critical Points}
\label{sec:mlp_criticalpoints}

In this section, we characterize the existence of critical points in FNNs and ResNets in the parameter regime $\alpha \gg 1$. For that purpose, we first extend a result of~\cite{kk2025}, that non-augmented MLPs in normal form cannot have any critical points, to our setting of general layer maps.

\begin{theorem}[No Critical Points in Non-Augmented FNNs]\label{th:mlp_criticalpoints}
    Let $\overline{\Phi} \in \textup{FNN}^1_{\delta,\textup{N}}(\mathcal{X},\R)$, $\mathcal{X}\subset \R^{\nin}$ open, be a scalar non-augmented FNN with the following properties:
    \begin{itemize}
        \item All Jacobian matrices $\partial_{h_{l-1}}f_l(h_{l-1},\theta_l)\in\R^{n_l\times n_{l-1}}$ have full rank $n_{l}$ for all $h_{l-1}\in\R^{n_{l-1}}$, $\theta_l \in \Theta_l\subset \R^{p_l}$ and $l \in \{0,\ldots,L+1\}$.
    \end{itemize}
    Then $\overline{\Phi}$ cannot have any critical points, i.e., $\nabla_x \overline{\Phi}(x) \neq 0$ for all $x \in \mathcal{X}$.
    
	In the case that the feed-forward neural network is a non-augmented MLP $\overline{\Phi} \in \textup{MLP}^1_{\delta,\textup{N}}(\mathcal{X},\R)$ with $\mathcal{X}\subset \R^{\nin}$ open, the assumption can be replaced by the following conditions:
	\begin{itemize}
		\item The component-wise applied activation functions $\sigma_l\in C^1(\R,\R)$, $l \in \{0,\ldots,L+1\}$ fulfill Assumption~\ref{ass:A_activation_monotone}, i.e., they are strictly monotone and it holds $ \lvert \sigma_l'(y) \rvert > 0$ for every $y \in \R$ and all $l \in \{0,\ldots,L+1\}$.
		\item All weight matrices $(W_0, \widetilde{W}_0, \ldots, W_{L+1}, \widetilde{W}_{L+1})\in \Theta \subset \R^p$, have full rank, cf.~Assumption~\ref{ass:B_weights_fullrank}.
	\end{itemize}
\end{theorem}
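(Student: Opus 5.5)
The plan is to compute the input gradient with the chain rule and show that a product of full-rank matrices with monotonically decreasing dimensions is non-zero. Since $\overline{\Phi}(x) = h_{L+1}(x)$ with $h_l = \delta f_l(h_{l-1},\theta_l)$, we get
\begin{equation*}
\partial_x \overline{\Phi}(x) = \delta^{L+2}\, \partial_{h_L} f_{L+1}(h_L,\theta_{L+1}) \cdots \partial_{h_0} f_1(h_0,\theta_1)\, \partial_{x} f_0(x,\theta_0) \in \R^{1\times \nin}.
\end{equation*}
Here $\delta^{L+2}>0$ is irrelevant for vanishing. Each factor $J_l \coloneqq \partial_{h_{l-1}} f_l \in \R^{n_l\times n_{l-1}}$ has full rank $n_l$ by assumption, so it is surjective. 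The non-augmented condition $n_{l-1}\geq n_l$ ensures that rank $n_l$ is indeed the full rank.

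The key step is the linear-algebra fact that a composition of surjective linear maps is surjective. Thus $J_{L+1}\cdots J_0\colon \R^{\nin}\to \R^{n_{L+1}} = \R$ is surjective, so it is a non-zero row vector. This is the same argument as \cite[Lemma C.1]{kk2025}, which was already used in Proposition~\ref{prop:resnet_nonaugmented} and Theorem~\ref{th:node_criticalpoints}. Transposing gives $\nabla_x\overline{\Phi}(x)\neq 0$ for all $x\in\mathcal{X}$.

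For the MLP case, it suffices to verify the Jacobian rank hypothesis. From the explicit form of the layer map, exactly as in Proposition~\ref{prop:resnet_gradient},
\begin{equation*}
\partial_{h_{l-1}} f_l = \widetilde{W}_l\,\sigma_l'(a_l)\,W_l, \qquad \widetilde{W}_l\in\R^{n_l\times m_l},\quad W_l\in\R^{m_l\times n_{l-1}}.
\end{equation*}
By Assumption~\ref{ass:A_activation_monotone}, the diagonal matrix $\sigma_l'(a_l)$ is invertible. The non-augmented MLP condition $n_{l-1}\geq m_l\geq n_l$ together with Assumption~\ref{ass:B_weights_fullrank} gives two surjections: $W_l$ has rank $m_l$ and maps $\R^{n_{l-1}}$ onto $\R^{m_l}$, and $\widetilde{W}_l$ has rank $n_l$ and maps $\R^{m_l}$ onto $\R^{n_l}$. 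Hence the product is again surjective and has rank $n_l$, so the first part applies.

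There is no real obstacle. The only care needed is the dimension bookkeeping: full rank must coincide with surjectivity at every factor, which is exactly what the decreasing-width condition guarantees. One should also note that the indexing runs over $l\in\{0,\ldots,L+1\}$ with $h_{-1}=x$, so that both the input and output layers are included in the product.
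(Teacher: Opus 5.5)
Your proof is correct and matches the paper's argument for the FNN part: you apply the chain rule, observe that the factors have full rank with non-increasing widths, conclude that their product is a surjective map onto $\R$, and hence that the gradient is non-zero. For the MLP part, the paper simply invokes \cite[Theorem~3.16]{kk2025}, whereas you check directly that $\widetilde{W}_l\sigma_l'(a_l)W_l$ has rank $n_l$ (using $n_{l-1}\geq m_l\geq n_l$, the full-rank weights and the invertible diagonal factor) and then reduce to the first part; this is a valid and more self-contained substitute for the citation.
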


\begin{proof}
    For the first part of the statement, we calculate the FNN input gradient, given by
    \begin{equation} \label{eq:FNNgradient}
		\nabla_x \overline{\Phi}(x) = \Bigl[ \delta \cdot \partial_{h_{L}} f_{L+1}(h_{L},\theta_{L+1}) \cdots \delta \cdot\partial_{h_{0}} f_1(h_{0},\theta_1) \cdot \delta\cdot \partial_xf_0(x,\theta_0) \Bigr]^\top \quad \in \R^{\nin}.
	\end{equation}
    As the FNN $\overline{\Phi}$ is scalar and non-augmented, it holds $\nin = n_{-1} \geq  \ldots \geq \nout = n_{L+1} = 1$, such that the dimensions in the matrix product~\eqref{eq:FNNgradient} are monotonically decreasing. As all Jacobian matrices have full rank, the gradient $\nabla_x \overline{\Phi}(x)$ always has full rank~$1$, uniformly in $x \in \mathcal{X}$.

    In the case of non-augmented MLPs, the theorem follows directly from~\cite[Theorem~3.16]{kk2025}: the definition of MLPs in~\cite{kk2025} only differs by the number of layers and the standing assumption on the strict monotonicity of the activation functions, which we assume here additionally. 
\end{proof}

In the following theorem, we show for non-augmented ResNets $\Phi \in \textup{RN}^1_{\eps,\delta,\textup{N}}(\mathcal{X},\R)$, $\mathcal{X}\subset \R^{\nin}$ open, that the absence of critical points persists for $\alpha \coloneqq \frac{\delta}{\eps}$ sufficiently large. In the case of canonical ResNets, we provide an explicit lower bound on the ratio~$\alpha$ above which $\Phi$ cannot have any critical points. 

\begin{theorem}[No Critical Points in Non-Augmented ResNets with $\alpha \gg 1$] \label{th:resnet_criticalpoints_alphalarge}
	Consider a scalar non-augmented ResNet $\Phi \in \textup{RN}^1_{\eps,\delta,\textup{N}}(\mathcal{X},\R)$, $\mathcal{X}\subset \R^{\nin}$ open, with the following properties:
	\begin{itemize}
		\item The input and output transformations $\lambda$ and $\tilde{\lambda}$ fulfill Assumption~\ref{ass:C_input_output}.
		\item Every residual function $f_l(\cdot,\theta_l)\in C^1(\R^{\nhid},\R^{\nhid})$ fulfills the lower Lipschitz condition
		\begin{equation*}
			\norm{f_l(y_1,\theta_l)-f_l(y_2,\theta_l)}_2\geq k_f \norm{y_1-y_2}_2
		\end{equation*}
		for some $k_f >0$ for all $y_1,y_2 \in\R^{\nhid}$, $l \in \{1,\ldots,L\}$ and all $\theta_l \in \Theta_l \subset \R^{p_l}$.
	\end{itemize}
	Then, if $\alpha \coloneqq \frac{\delta}{\eps} > \frac{1}{k_{f} }$,  $\Phi$ cannot have any critical points, i.e., $\nabla_x \Phi(x) \neq 0$ for all $x \in \mathcal{X}$. If the ResNet $\Phi \in \textup{RN}^1_{\eps,\delta,\sigma,\textup{N}}(\mathcal{X},\R)$ is canonical, the assumption on the residual functions $f_l$ can be replaced by the following:
	\begin{itemize}
		\item The activation functions $\sigma_l\in C^1(\R,\R)$ fulfill Assumption~\ref{ass:A_activation_monotone}, i.e., they are strictly monotone and it holds $ \abs{\sigma_l'(y)} > 0$ for every $y \in \R$ and all $l \in \{1,\ldots,L\}$. Additionally we assume that there exists a constant $k_\sigma >0$ such that $\inf_{x \in \mathcal{X}} \vert \sigma_l'([a_l]_i)\vert \geq k_\sigma$ with $a_l \coloneqq W_l h_{l-1} + b_l$ and $h_0 = x$ for all $\theta_l = (W_l,\widetilde{W}_l,b_l,\tilde{b}_l)\in \Theta_l$ with $l \in \{1,\ldots,L\}$, $i\in\{1,\ldots,m_l\}$.
		\item The smallest singular value of the matrix products $W_l\widetilde{W}_{l}$ is uniformly bounded from below, i.e., there exists a constant $\nu_{\min}>0$ such that  $\sigma_{\min}(W_l\widetilde{W}_{l}) \geq \nu_{\min}$ for all $\theta_l = (W_l,\widetilde{W}_l,b_l,\tilde{b}_l)\in \Theta_l$ and $l \in \{1,\ldots,L\}$.
	\end{itemize}
	Then, if $\alpha > \frac{1}{\nu_{\min}\cdot k_\sigma}$, $\Phi$ cannot have any critical points.
\end{theorem}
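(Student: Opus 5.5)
The plan mirrors the proof of Theorem~\ref{th:resnet_criticalpoints_alphasmall}, with upper bounds on the spectral radius replaced by lower bounds on the smallest eigenvalue modulus. By Proposition~\ref{prop:resnet_nonaugmented} (or, more precisely, its proof applied pointwise at each input $x$), it suffices to show that $-\frac{1}{\alpha}$ is not an eigenvalue of $J_l\coloneqq\partial_{h_{l-1}}f_l(h_{l-1},\theta_l)$. Then Lemma~\ref{lem:resnet_Dl_fullrank} makes every layer-wise Jacobian $D_l$ invertible, and Assumption~\ref{ass:C_input_output} together with the decreasing dimensions $\nin\geq\nhid\geq 1$ gives $\nabla_x\Phi(x)\neq 0$. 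The key inequality will be
\begin{equation*}
\abs{\lambda_i(J_l)} \;\geq\; \sigma_{\min}(J_l) \qquad \text{for every eigenvalue } \lambda_i \text{ of a square matrix } J_l.
\end{equation*}
This holds because an eigenvector $v$ with $\norm{v}_2=1$ satisfies $\abs{\lambda_i}=\norm{J_lv}_2\geq\sigma_{\min}(J_l)$.

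\textbf{General case.} First I convert the lower Lipschitz condition into a lower bound on the Jacobian. Fix $h$ and a direction $v$, and set $y_1=h+tv$, $y_2=h$. Dividing the lower Lipschitz inequality by $t$ and letting $t\to 0$ (using $f_l(\cdot,\theta_l)\in C^1$) yields $\norm{J_lv}_2\geq k_f\norm{v}_2$, so $\sigma_{\min}(J_l)\geq k_f$. Every eigenvalue then satisfies $\abs{\lambda_i(J_l)}\geq k_f>\frac{1}{\alpha}$, because $\alpha>\frac{1}{k_f}$. Hence $-\frac{1}{\alpha}$ cannot be an eigenvalue, for all $l$, all $h_{l-1}$ and all $\theta_l\in\Theta_l$, and Proposition~\ref{prop:resnet_nonaugmented} applies directly.

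\textbf{Canonical case.} Here $J_l=\widetilde W_l\sigma_l'(a_l)W_l$ by Proposition~\ref{prop:resnet_gradient}. Since $-\frac1\alpha\neq0$, I use again that $AB$ and $BA$ share their nonzero eigenvalues. It then suffices to exclude $-\frac1\alpha$ from the spectrum of the square $m_l\times m_l$ matrix $W_l\widetilde W_l\,\sigma_l'(a_l)$. For square matrices $\sigma_{\min}(AB)\geq\sigma_{\min}(A)\,\sigma_{\min}(B)$, and the diagonal matrix $\sigma_l'(a_l)$ has $\sigma_{\min}\geq k_\sigma$. Combining these gives
\begin{equation*}
\abs{\lambda_i\bigl(W_l\widetilde W_l\sigma_l'(a_l)\bigr)}\;\geq\;\nu_{\min}\,k_\sigma\;>\;\frac1\alpha .
\end{equation*}
The hypothesis on $k_\sigma$ is stated only along the pre-activations $a_l$ actually realized by inputs $x\in\mathcal{X}$, not for all $h_{l-1}\in\R^{\nhid}$. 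So instead of citing Proposition~\ref{prop:resnet_nonaugmented} verbatim, I rerun its argument for each fixed $x$: each $D_l$ evaluated along the trajectory of $x$ is invertible, and the rank argument through \eqref{eq:resnet_gradient} goes through unchanged.

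\textbf{Main obstacle.} I expect the main care to be needed in the passage from the lower Lipschitz inequality to $\sigma_{\min}(J_l)\geq k_f$ in the general case. The secondary subtlety is the canonical case, where the eigenvalue transfer from $J_l$ to $W_l\widetilde W_l\sigma_l'(a_l)$ is valid only because $-\frac1\alpha\neq 0$, and where the infimum over $x$ in the $k_\sigma$ assumption forces the pointwise version of Proposition~\ref{prop:resnet_nonaugmented}.
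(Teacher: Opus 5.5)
Your proposal is correct and follows the paper's proof essentially step by step. The limit of the lower Lipschitz quotient gives $\sigma_{\min}(J_l)\geq k_f$, hence $\abs{\lambda_i}\geq k_f>\frac1\alpha$; in the canonical case, the transfer to $W_l\widetilde W_l\sigma_l'(a_l)$ via shared nonzero eigenvalues and the bound $\nu_{\min}k_\sigma$ is the paper's eigenvector computation, written as $\sigma_{\min}(AB)\geq\sigma_{\min}(A)\sigma_{\min}(B)$. Your observation that the $k_\sigma$ hypothesis only holds along realized pre-activations, so Proposition~\ref{prop:resnet_nonaugmented} must be applied pointwise in $x$, is a small gain in rigor over the paper, which cites that proposition verbatim.
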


\begin{proof}
	We aim to prove the theorem by applying Proposition~\ref{prop:resnet_nonaugmented}, which requires showing that for $\alpha > \frac{1}{k_f}$, it holds that $- \frac{1}{\alpha}$ is not an eigenvalue of the Jacobian matrix $\partial_{h_{l-1}}f_l(h_{l-1},\theta_l)$ for all $l \in \{1,\ldots,L\}$, $h_{l-1} \in \R^{\nhid}$, and $\theta_l \in \Theta_l \subset \R^{p_l}$. By the assumed lower Lipschitz condition on the residual functions $f_l$, it holds for $\mu>0$ and $v \in \R^{\nhid}$:
	\begin{equation} \label{eq:directional_derivative}
		\frac{\norm{f_l(h_{l-1} + \mu \cdot v,\theta_l) - f_l(h_{l-1},\theta_l)}_2}{\abs{\mu}} \geq \frac{k_f \cdot \norm{\mu \cdot v}_2}{\abs{\mu}} = k_f \cdot \norm{v}_2.
	\end{equation}
	As $f_l(\cdot,\theta_l)\in C^1(\R^{\nhid},\R^{\nhid})$, we can take the limit $\mu \rightarrow 0$ in~\eqref{eq:directional_derivative}, which yields
	\begin{equation} \label{eq:directional_derivative_2}
		\norm{\partial_{h_{l-1}}f_l(h_{l-1},\theta_l) \cdot v}_2 
        \geq k_f \cdot \norm{v}_2.
	\end{equation}
	By the min-max characterization of singular values (cf.~\cite{Horn2012}), it holds for a matrix $A \in \R^{a \times b}$ that
	\begin{equation}\label{eq:singularvalues}
		\sigma_{\min}(A) =  \min_{v \in \R^{b} \setminus \{0\}} \frac{\norm{Av}_2}{\norm{v}_2}.
	\end{equation}
	Hence, we can conclude for the smallest singular value of the Jacobian matrix  $\partial_{h_{l-1}}f_l(h_{l-1},\theta_l)$: 
	\begin{equation*}
		\sigma_{\min}(\partial_{h_{l-1}}f_l(h_{l-1},\theta_l)) =  \min_{v \in \R^{\nhid},v\neq 0} \frac{\norm{	\partial_{h_{l-1}}f_l(h_{l-1},\theta_l) \cdot v}_2}{\norm{v}_2} \geq k_f.
	\end{equation*}
	As the smallest singular value is a lower bound for the absolute value of all eigenvalues (cf.~\cite{Horn2012}), it holds  
	\begin{equation*}
		\min_{i \in \{1,\ldots,\nhid\}} \vert \lambda_i(\partial_{h_{l-1}}f_l(h_{l-1},\theta_l)) \vert  \geq  \sigma_{\min}(\partial_{h_{l-1}}f_l(h_{l-1},\theta_l)) \geq k_f > \frac{1}{\alpha}.
	\end{equation*}
	Consequently, $-\frac{1}{\alpha}$ cannot be an eigenvalue of $\partial_{h_{l-1}}f_l(h_{l-1},\theta_l)$ for any $l \in \{1,\ldots,L\}$, such that the first statement follows from Proposition~\ref{prop:resnet_nonaugmented}.
	
	In the case $\Phi \in \textup{RN}^1_{\eps,\delta,\sigma,\textup{N}}(\mathcal{X},\R)$, we have $\partial_{h_{l-1}}f_l(h_{l-1},\theta_l) =  \widetilde{W}_{l}\sigma'_{l}(a_{l})W_{l}$ with $a_l \coloneqq W_l h_{l-1} + b_l$, which is the explicit form of the Jacobian matrix determined in Proposition~\ref{prop:resnet_gradient}. As for two matrices $A\in \R^{a\times b}$ and $B \in \R^{b \times a}$, the matrix products $AB \in \R^{a \times a}$ and $BA \in \R^{b \times b}$ have the same non-zero eigenvalues (cf.~\cite{Horn2012}), it holds
    \begin{equation*}
         \min_{i: \lambda_i \neq 0} \vert \lambda_i(\widetilde{W}_l\sigma'_{l}(a_{l})W_l) \vert = \min_{i: \lambda_i \neq 0} \vert \lambda_i(W_l\widetilde{W}_l\sigma'_{l}(a_{l})) \vert.
    \end{equation*}
    Let $\lambda^*$ be an arbitrary non-zero eigenvalue of the Jacobian $\widetilde{W}_l\sigma'_l(a_l)W_l$, so it is also  a non-zero eigenvalue of the matrix product $W_l\widetilde{W}_l\sigma'_l(a_l)$. Let $v^* \neq 0$ be the corresponding eigenvector such that
    \begin{equation*}
        W_l\widetilde{W}_l\sigma'_l(a_l)v^* = \lambda^*v^*.
    \end{equation*}
    Taking the Euclidean norm on both sides yields
    \begin{equation*}
        \abs{\lambda^*}\cdot \norm{v^*}_2 = \normm{W_l\widetilde{W}_l\sigma'_l(a_l)v^*}_2 \geq \sigma_{\min}(W_l\widetilde{W}_l) \norm{\sigma'_l(a_l)v^*}_2 \geq \nu_{\min} \cdot k_\sigma \cdot \norm{v^*}_2,
    \end{equation*}
    where we used the minimal singular value characterization from~\eqref{eq:singularvalues}, the given assumptions on $\sigma_{\min}(W_l\widetilde{W}_l)$, the fact that $\sigma'_l(a_l)$ is a diagonal matrix and the assumed lower bound on the activation functions. Dividing by $\norm{v^*}_2 >0$ results in
    \begin{equation*}
        \abs{\lambda^*} \geq \nu_{\min} \cdot k_\sigma > \frac{1}{\alpha}.
    \end{equation*}
    This shows that any non-zero eigenvalue of the Jacobian matrix $\widetilde{W}_l\sigma'_{l}(a_{l})W_l$ has an absolute value strictly greater than $\frac{1}{\alpha}$. Since $-\frac{1}{\alpha} \neq 0$, potential zero eigenvalues do not play a role in the relevant analysis. Consequently, $-\frac{1}{\alpha}$ cannot be an eigenvalue of $\partial_{h_{l-1}}f_l(h_{l-1},\theta_l)$ for any $l \in \{1,\ldots,L\}$, such that the second statement follows from Proposition~\ref{prop:resnet_nonaugmented}.
\end{proof}

\begin{remark}
    The assumption for canonical ResNets of Theorem~\ref{th:resnet_criticalpoints_alphalarge} that there exists a constant $k_\sigma >0$ with $\inf_{x \in \mathcal{X}} \vert \sigma_l'([a_l]_i)\vert \geq k_\sigma$ with $a_l \coloneqq W_l h_{l-1} + b_l$ and $h_0 = x$ for all $l \in \{0,\ldots,L\}$ and $i\in\{1,\ldots,m_l\}$ is standard in applications. Indeed, it is satisfied for typical activation functions such as $\tanh$ or sigmoidal functions whenever the input domain $\mathcal{X}\subset \R^{\nin}$ and all parameters $\theta \in \Theta = \Theta_1 \times \cdots \times \Theta_L$ are bounded (cf.~Assumption~\ref{ass:B_weights_bounded}). If the domain $\mathcal{X}$ is unbounded, no such $k_\sigma > 0$ exists, since $\vert \sigma_l'(y)\vert \to 0$ as $\vert y \vert \to \infty$ for these activation functions.
\end{remark}

\begin{remark}
    Assumption~\ref{ass:B_weights_fullrank} guarantees that for a given parameter regime $\Theta$ all weight matrices contained in $\Theta$ have full rank and hence strictly positive singular values. However, this does not prevent the infimum of all singular values of $\Theta$ from being zero. Theorem~\ref{th:resnet_criticalpoints_alphalarge} requires the stronger $\Theta$-uniform lower bound for all matrix products $W_l \widetilde{W}_{l}$.
    
    The assumption in Theorem~\ref{th:resnet_criticalpoints_alphalarge} that the smallest singular value of $W_l \widetilde{W}_l$ is uniformly bounded from below imposes an architectural constraint: Because $W_l \in \R^{m_l \times \nhid}$ and $\widetilde{W}_l \in \R^{\nhid \times m_l}$, their product is an $m_l \times m_l$ matrix. By the properties of matrix rank, it holds $\operatorname{rank}(W_l \widetilde{W}_l) \leq \min(m_l, \nhid)$. For $\sigma_{\min}(W_l \widetilde{W}_l)$ to be strictly positive, the matrix must have full rank $m_l$, which implies $\nhid \geq m_l$ for all $l \in \{1,\ldots,L\}$.
    
    This restriction aligns with the definition of non-augmented architectures: In classical MLPs, non-augmented networks are restricted to monotonically decreasing or constant layer dimensions (cf.~Definition~\ref{def:mlp_classes}). Similarly, requiring $\nhid \geq m_l$ ensures that no intermediate augmentations of pre-activated states $a_l \in \R^{m_l}$ exist. The case $m_l> \nhid$ is mathematically prohibited for $\alpha \gg 1$ because the resulting rank-deficiency of $W_l \widetilde{W}_l$ would introduce zero eigenvalues, which are close to the critical eigenvalue $-\frac{1}{\alpha}$. It is worth noting, however, that the dimension constraint is an artifact of the usage of uniform singular value bounds.
\end{remark}

\section{Examples}
 \label{sec:numerics}

We start with a thorough analysis of the simplest one-dimensional ResNets, as they illustrate the embedding restrictions present for the various regimes of $\alpha$ in their fundamental form. This has two purposes. First, to present the application of the derived expressivity results of Section~\ref{sec:resnets} to Section~\ref{sec:mlp} to explicit examples. Second, to discuss the impact of the parameter initialization and training on the embedding restrictions. We further implement two-dimensional classification tasks that illustrate the connection of the approximation constraints to the ``tunnel effect'' proven in Theorem~\ref{thm:constraints_nd}. The code used to generate the models and resulting plots of this section can be found at \url{https://github.com/twoehrer/Narrow_ResNet_Constraints.git}.

\subsection{One-Dimensional ResNets}\label{subsec:1dexamples}
For this entire subsection, our goal is to use one-dimensional ResNets to approximate the function $f(x) = x^2$ for $x\in (-1,1)$ as it has a critical point at $x = 0$. \medskip

Before analyzing this one-dimensional setting in detail, let us state the implementations' \emph{numerical specifications}: The models are trained on 300 uniformly distributed points, $x_i \sim \mathcal{U}(-1,1)$, with labels $y_i = x_i^2$ for $i = 1,\ldots, 300$. A mean-squared error loss is used to train the networks to approximate the target function. We use standard Xavier initialization (a standard normal distribution in the one-dimensional case) for the model weights. The model parameters are optimized via the stochastic gradient descent based Adam algorithm with learning rate 0.01. 

\subsubsection{One-Layer ResNets}
  Consider a one-layer ResNet $\Phi \in \textup{RN}^1_{\eps,\delta,\sigma}((-1,1),\R)$ from Definition~\ref{def:resnet} of the canonical form
    \begin{equation}
        \Phi(x) = \widetilde{W}_2 \big[ \eps x + \delta \widetilde{W}_1\tanh(W_1x+b_1) + \bt_1 \big]+\tilde{b}_2 \label{eq:onelayermodel}
    \end{equation}
    with scalars $\Wt_2,\Wt_1, W_1, b_1, \bt_1, \bt_2 \in \R$, and activation function $\sigma = \tanh$ which is monotone, bounded, and smooth.
    This corresponds to one layer $L=1$ in \eqref{eq:resnet}, with input transformation $\lambda(x) = x$, and an affine output transformation $\tilde{\lambda}(y) = \widetilde{W}_2y + \tilde{b}_2$. As the input space is not augmented, it is classified as $\Phi \in \textup{RN}^1_{\eps,\delta, \sigma, \textup{N}}((-1,1),\R)$ according to Definition~\ref{def:resnet_classes}.

Now, we examine the embedding restrictions within the model's parameter space for a fixed ratio $\alpha := \frac\delta\epsilon$. From an implementation standpoint, this determines whether a chosen architecture and parameter regime are able to efficiently approximate the target function.\\

We start with the two limit cases:
\begin{itemize}
  \item \emph{Case $\alpha = 0$, $\delta = 0$:} In this case \eqref{eq:onelayermodel} is affine linear and it follows that for all parameters no non-degenerate critical point can exist. I.e.\ critical points only exists for the trivial case $\Phi \equiv 0$ which implies every point $x$ is a degenerate critical point ($\Phi''(x) = 0$).
  
    \item  \emph{Case $\alpha = \infty$, $\eps = 0$; Figure~\ref{fig:1d1lresnet}\subref{subfig:eps0}:} Setting $\eps = 0$ leads to $\Phi\in \textup{MLP}^1_{\delta, \textup{N}}((-1,1),\R)$. As the input derivative is $\Phi'(x) = \Wt_2\delta \Wt_1 \tanh'(W_1x + b_1) W_1$ and $|\tanh'|> 0$, a critical point only exists for the trivial MLP $\Phi(x) \equiv 0$ and hence for all model parameters no non-degenerate critical point can exist, in accordance with Theorem~\ref{th:mlp_criticalpoints}. See Figure~\ref{fig:1d1lresnet}\subref{subfig:eps0} for a trained approximation which fails to globally approximate the target function well due to the inability to express critical points. We refer to the detailed analysis of the resulting restrictions of Theorem~\ref{thm:constraints_1d}.
   \end{itemize}
    
    For the remaining cases, let us calculate the input derivative of \eqref{eq:onelayermodel} explicitly as
    \begin{equation}\label{eq:phiprime}
        \Phi'(x) = \Wt_2\delta\Big[\frac1\alpha + \Wt_1 \tanh'(W_1 x + b_1) W_1\Big], \quad x\in (-1,1),
    \end{equation}
    and assume $\Wt_2, \Wt_1, W_1 \neq 0$ as well as $\eps, \delta, \alpha > 0$. Then it directly follows
    that (cf. Proposition~\ref{prop:resnet_nonaugmented})
    \begin{equation}\label{eq:directcrit}
    \Phi'(x) \neq 0 \iff \alpha \neq \frac{1}{-\Wt_1 W_1 \tanh'(W_1 x + b_1)}.
    \end{equation}
For the following case distinction, denote the model parameter set of the ResNets considered as 
\begin{equation}
    \Theta := \{(\Wt_2,\Wt_1, W_1, b_1, \bt_1, \bt_2) \in \R^6 \mid \Wt_2\neq 0, \Wt_1\neq 0, W_1 \neq 0  \}.
\end{equation}

\begin{itemize}
    \item \emph{Case $0 < \alpha \ll 1$:} In this case, Theorem~\ref{th:resnet_criticalpoints_alphasmall} yields that no critical point exists for $\Phi$ provided  $0 < \alpha < \frac{1}{ |\Wt_1 W_1|}$ where $K_\sigma = 1$ since $\tanh' \leq 1$.
    As $\tanh' > 0$, \eqref{eq:directcrit} additionally tells us that no critical point exists if $\Wt_1 W_1 > 0$ is satisfied. In total, no critical point of \eqref{eq:onelayermodel} can exist if $\Wt_1 W_1 > 0$ or $\frac{-1}{\alpha}<\Wt_1 W_1 < 0$. For fixed $0 < \alpha \ll 1$ it holds that all ResNets $\Phi$  do not have critical points if the model parameters satisfy
    \begin{equation}\label{eq:thetaalphasmall}
     \theta \in \Theta^{\alpha \ll 1} := \Big\{ \theta \in \Theta \mid \Wt_1 W_1  > -\frac{1}{\alpha} \Big\}. 
    \end{equation}

    \item \emph{Case $\alpha \gg 1$:} For $\alpha$ large enough, Theorem~\ref{th:resnet_criticalpoints_alphalarge} yields that $\Phi$ is not able to have a critical point as long as $\frac{1}{\alpha k_\sigma} <|\Wt_1 W_1| $ with $k_\sigma:=\inf_{x\in (-1,1)} \tanh'(W_1x + b_1)  = \tanh'(|W_1| + b_1)> 0$. The direct calculation in \eqref{eq:directcrit} tells us more precisely that no critical point exists if $\Wt_1 W_1 > 0$ or $\Wt_1W_1 < -\frac{1}{\alpha k_\sigma} < 0$. So far, this condition is formulated for fixed $\Wt_1W_1$ as $k_\sigma$ is model parameter-dependent. As $k_\sigma$ satisfies $k_\sigma(z) \to 0$ for $|z| \to \infty$, we need to bound the parameters to obtain a uniform estimate for the whole parameter regime:
    For sufficiently large $\alpha \gg 1$ it holds that all ResNets $\Phi$  do not have critical points if  the model parameters satisfy
    \begin{equation}\label{eq:thetaalphalarge}
        \theta \in \Theta^{\alpha \gg 1} := \Big\{ \theta \in \Theta :\, |W_1| < \omega_\infty, |b_1| < \beta_\infty, \Wt_1 W_1 \in \Big(-\infty, -\frac{1}{\alpha k_{\omega_\infty, \beta_\infty}}\Big)\cup (0, +\infty)\Big\},
    \end{equation}
    where the uniform bound is defined as
    \begin{equation}\label{eq:lowerk}
    k_{\omega_\infty, \beta_\infty}:= \inf_{|W_1| \le \omega_\infty, |b_1|\le \beta_\infty} \inf_{x\in(-1,1)} \tanh'(W_1x + b_1) = \tanh'(\omega_\infty + \beta_\infty).
    \end{equation}

    \item \emph{Case $\alpha = 1$:} In this case, we are ``in-between'' the regimes of $\alpha$ the previous sections are mainly concerned with. We analyze it, as \eqref{eq:directcrit} provides an explicit condition on the model parameters for a critical point. 
For any fixed $x\in (-1,1)$, the critical point condition
    \begin{equation}\label{eq:critcond1l}
        -\Wt_1 W_1 \tanh'(W_1x + b_1) = 1.
    \end{equation}
    can be satisfied by choosing $\Wt_1 W_1 \leq  -1$ with $\Wt_1 \neq 0$ arbitrary, $W_1 = \frac{-1}{\Wt_1}$ and $b_1 = \frac{-1}{\Wt_1 x}$ if $x\neq0$ and $b_1 = 0$ otherwise. Hence, ResNets $\Phi$ of the form \eqref{eq:onelayermodel} with $\alpha = 1$ cannot have critical points if the model parameters satisfy $\Wt_1W_1> -1$, which aligns with \eqref{eq:thetaalphasmall} for $\alpha = 1$. We can further exclude parameters by evoking Theorem~\ref{th:resnet_criticalpoints_alphalarge}. As for the case $\alpha \gg 1$, we require upper bounds on $W_1$ and $b_1$ to bound $|\Wt_1 W_1|$ uniformly from below. Then, in analogy to \eqref{eq:thetaalphalarge}, ResNets with $\alpha = 1$ do not have critical points if the model parameters satisfy
    \begin{equation}
        \theta \in \Theta^{\alpha = 1} := \Big\{ \theta \in \Theta :\, |W_1| < \omega_\infty, |b_1| < \beta_\infty, \Wt_1 W_1 \in \left(-\infty, -\frac{1}{k_{\omega_\infty, \beta_\infty}}\right)\cup (0, +\infty
        )\Big\},
    \end{equation}
    where $k_{\omega_\infty, \beta_\infty}$ is defined in \eqref{eq:lowerk}.

\begin{figure}
    \centering
    \begin{subfigure}{0.45\linewidth}
        \centering
        \includegraphics[width=\linewidth]{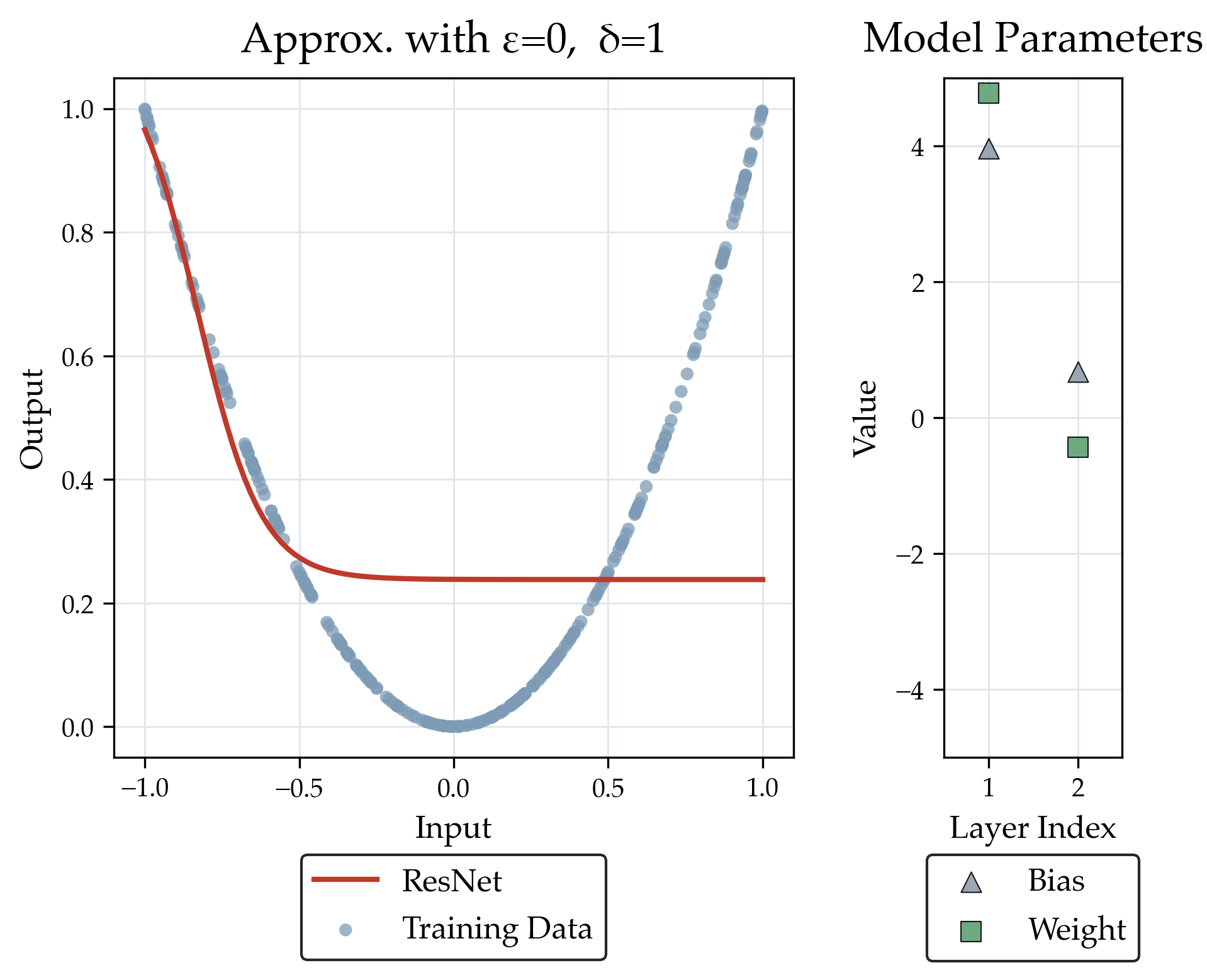}
        \caption{\small MLP, $\varepsilon = 0$, $\delta = 1$.}
        \label{subfig:eps0}
    \end{subfigure}
    \begin{subfigure}{0.45\linewidth}
        \centering
        \includegraphics[width=\linewidth]{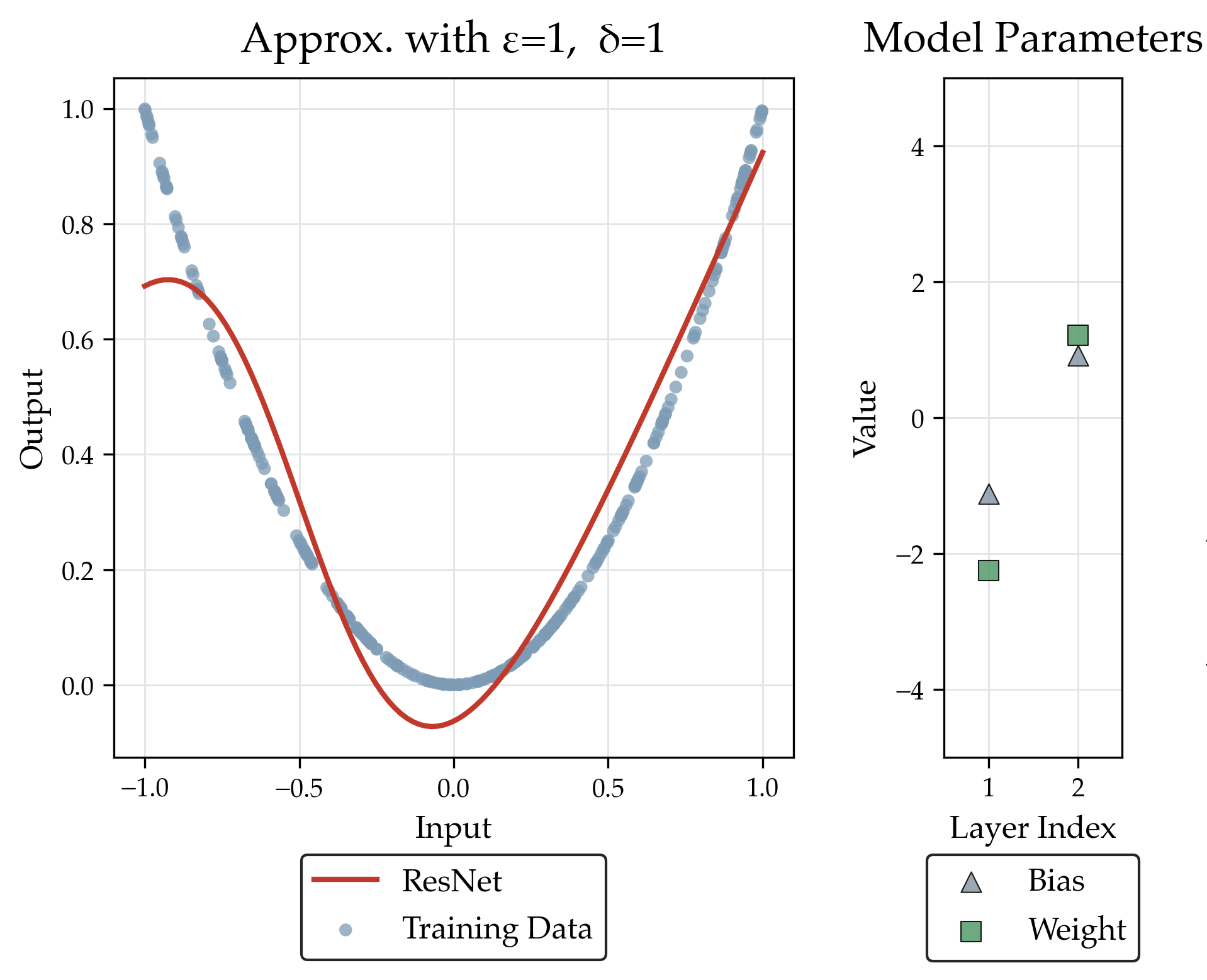}
        \caption{\small ResNet, $\varepsilon = \delta = 1$.}
        \label{subfig:eps1}
    \end{subfigure}
   
    \caption{\small Approximation of $f(x) = x^2$ by a one-dimensional one-layer ResNet \eqref{eq:onelayermodel}, $\Wt_1 = 1$, $\bt_1 = 0$ and the other weights and biases as plotted. \label{fig:1d1lresnet}}
\end{figure}

\item \emph{Case $\alpha = 1, \Wt_1 = 1$, $\bt_1 = 0$; Figure~\ref{fig:1d1lresnet}\subref{subfig:eps1}:} If we simplify the model and fix $\Wt_1 = 1$ and $\bt = 0$, Theorem~\ref{th:resnet_criticalpoints_alphalarge} does not provide any further model parameter bounds (as in the case $\alpha \gg 1$) beyond $W_1 > 0$ and $W_1 < -1$. We show that the embedding restriction criterion $1 > \frac{1}{ k_\sigma \nu_{\min}}$ of Theorem~\ref{th:resnet_criticalpoints_alphalarge} is never fulfilled. The reason is that, naturally, we require the lower bound on the weights ($\nu_{\min}$ in Theorem~\ref{th:resnet_criticalpoints_alphalarge}) to be smaller than the upper bound on the weights (necessary for a positive $k_{\omega_\infty, \beta_\infty}$). I.e., $\nu_{\min} \leq \omega_\infty$ needs to be fulfilled. It follows (with $k_{\omega_\infty, \beta_\infty}$ from \eqref{eq:lowerk}) that
    \begin{equation}
    \nu_{\min} k_{\omega_\infty, \beta_\infty}\leq  \omega_\infty k_{\omega_\infty, \beta_\infty}  = \omega_\infty \tanh'(\omega_\infty + \beta_\infty)  \leq 1
    \end{equation}
    for any $\omega_\infty \geq 1$, where we used the boundedness of $y\mapsto y \tanh'(y + b)$ on $\R$ uniform in $|b| \leq \beta_\infty$. Hence, we cannot find any parameter regime $\Theta$ satisfying the assumptions of Theorem~\ref{th:resnet_criticalpoints_alphalarge} such that the condition $1 > \frac{1}{ k_\sigma \nu_s}$ is fulfilled uniformly in $\theta\in \Theta$. This means, in this case, ResNets $\Phi$ do not have critical points if 
    \begin{equation}
     \theta \in \tilde{\Theta}^{\alpha = 1} := \{  \theta \in \Theta \mid \Wt_1 = 1, \bt_1 = 0 , W_1 \in ( -1, 0) \cup (0, +\infty)\}.
        \label{eq:thetaalphaone}
    \end{equation}
\end{itemize}
We have collected the conditions on the weights for each regime of $\alpha$ such that no critical points can be embedded. Figure~\ref{fig:1d1lresnet} compares the approximation performance depending on the embedding capabilities. Plotted are two trained models $\Phi$ of the form \eqref{eq:onelayermodel} with $\Wt_1 = 1$. Figure~\ref{fig:1d1lresnet}\subref{subfig:eps0} shows that, as MLPs cannot embed critical points, the trained network fails the approximation and confirms the result of Theorem~\ref{thm:constraints_1d}. Figure~\ref{fig:1d1lresnet}\subref{subfig:eps1} shows that the ResNet is able to express critical points and achieves adequate approximations of the target function in relation to the model's simple form.

  \subsubsection{Multi-Layer ResNets}
  We now extend the analysis from the one-layer case to networks of arbitrary depth $L > 0$.
 
 Consider $\Phi \in \textup{RN}^1_{\eps,\delta,\sigma,\textup{N}}((-1,1),\R)$ with $\eps,\delta > 0$.
    To simplify the model (as often done in applications), we only look at layers with ``outer'' nonlinearities
    \begin{equation} \label{eq:ResNetGeneral1D}
        h_{l} = \eps h_{l-1} + \delta \tanh(W_l h_{l-1} +b_l)   = \eps h_{l-1} + \delta \sigma(a_l),
    \end{equation}
    where $h_0\coloneqq x \in (-1,1)$, $a_l \coloneqq W_l h_{l-1} + b_l$ with  $W_l, b_l \in \R$ for all $l\in \{1,\ldots,L\}$. With the specific choice of $f_l$ from \eqref{eq:resnet_typical_f}, this corresponds to taking $\widetilde{W}_l = 1$ and $\tilde{b}_l = 0$ for all layers $l = 1, \ldots, L$ in \eqref{eq:resnet}. As in the one-layer example, we consider an identity input transformation and an affine output transformation, such that the input-output map is
    \begin{equation}\label{eq:inputtooutput}
        \Phi(x) = \widetilde{W}_{L+1} h_L(x) + \tilde{b}, \quad \widetilde{W}_{L+1},\tilde{b}_{L+1}  \in \R,\, x\in (-1,1).
    \end{equation}

The criterion that an input $x\in (-1,1)$ is not a critical point naturally extends from the one-layer case of \eqref{eq:directcrit} to $L$ layers as (cf.\ Proposition~\ref{prop:resnet_nonaugmented})
        \begin{equation}\label{eq:directcritmulti}
    \Phi'(x) \neq 0 \iff \alpha \neq \frac{1}{- W_l \tanh'(W_l h_{l-1}(x) + b_l)} \quad \text{for all } l\in \{1,\ldots, L\}.
    \end{equation}
Due to $\tanh'(y)\in (0,1]$ for all $y\in\R$, a condition on the parameters such that no critical point for $\Phi$ exists, is $ W_l > \frac{-1}{\alpha}$ for all $l \in \{1, \ldots, L\}$. 
    
    In fact, the remaining parameter regime can embed critical points. We see this by generalizing the explicit solution of \eqref{eq:critcond1l} from the one-layer case with $\alpha = 1$ above to $L$ layers and arbitrary fixed ratio $\alpha>0$.
\begin{lemma}\label{lem:b0condition}
      Let $\Phi$ be a non-augmented ResNet of type \eqref{eq:ResNetGeneral1D}--\eqref{eq:inputtooutput} with $\eps, \delta > 0$, $L\ge 1$. Then an arbitrary input $x\in (-1,1)$ is a critical point if and only if there exists an $l \in \{1, \ldots, L\}$ such that the parameters $\alpha = \frac{\delta}{\eps} > 0, W_l \in \R\setminus\{0\}, b_l\in \R$ satisfy
\begin{equation}
W_l  \leq -\frac{1}{\alpha},\quad  b_l = \pm(\tanh')^{-1}\left(-\frac{1}{\alpha W_l}\right) - W_l h_{l-1}(x).
\label{eq:blcondition}
\end{equation}
\end{lemma}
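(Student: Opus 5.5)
The plan is to reduce the critical-point condition to a scalar equation in a single layer and then solve that equation explicitly for the bias. By Proposition~\ref{prop:resnet_gradient}, with $\lambda(x)=x$, $\tilde\lambda(y)=\widetilde{W}_{L+1}y+\tilde b_{L+1}$ and $\widetilde{W}_l=1$, the derivative factorizes as
\begin{equation*}
\Phi'(x)=\widetilde{W}_{L+1}\,\delta^L\prod_{l=1}^{L}\Bigl(\tfrac{1}{\alpha}+W_l\tanh'(W_lh_{l-1}(x)+b_l)\Bigr).
\end{equation*}
Here we tacitly assume $\widetilde{W}_{L+1}\neq 0$, in line with Assumption~\ref{ass:C_input_output} and the non-augmented setting; otherwise $\Phi$ is constant. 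So $x$ is critical if and only if some factor vanishes, i.e.\ there is an $l$ with
\begin{equation*}
\tanh'(W_lh_{l-1}(x)+b_l)=-\frac{1}{\alpha W_l}.
\end{equation*}
This is exactly \eqref{eq:directcritmulti}. The key point is that $h_{l-1}(x)$ depends only on $W_1,\dots,W_{l-1},b_1,\dots,b_{l-1}$, not on $(W_l,b_l)$. Hence the condition for layer $l$ is an equation in $(W_l,b_l)$ with $h_{l-1}(x)$ treated as a fixed number.

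Next, analyze the scalar equation $\tanh'(y)=c$ with $c=-1/(\alpha W_l)$ and $y=W_lh_{l-1}(x)+b_l$. Since $\tanh'(y)=1-\tanh^2(y)$ takes exactly the values in $(0,1]$, the equation is solvable if and only if $0<c\le 1$. The inequality $c>0$ forces $W_l<0$, and $c\le 1$ is then equivalent to $-\alpha W_l\ge 1$, i.e.\ $W_l\le -1/\alpha$.

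For the inverse, note that $\tanh'$ is even and strictly decreasing on $[0,\infty)$, so $(\tanh')^{-1}\colon(0,1]\to[0,\infty)$ is well defined. The full solution set of $\tanh'(y)=c$ is therefore $y=\pm(\tanh')^{-1}(c)$; when $c=1$ the two signs coincide at $y=0$. Solving $y=W_lh_{l-1}(x)+b_l$ for $b_l$ gives \eqref{eq:blcondition}. Running the argument backwards, any $(W_l,b_l)$ of this form makes the $l$-th factor vanish, so $x$ is critical. This gives both directions of the equivalence.

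The main subtlety is the well-posedness of the "choice" of $b_l$. One has to state clearly that $h_{l-1}(x)$ is determined by the earlier layers and is unaffected by $b_l$, so the formula is explicit rather than implicit. One also has to interpret $(\tanh')^{-1}$ as the inverse branch on $[0,\infty)$, with the $\pm$ covering the even symmetry. Everything else is the routine factorization coming from Proposition~\ref{prop:resnet_gradient} and Lemma~\ref{lem:resnet_Dl_fullrank}.
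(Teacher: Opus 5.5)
Your proposal is correct and follows essentially the same route as the paper: reduce criticality to the vanishing of a single layer-wise factor $\eps+\delta W_l\tanh'(a_l(x))$, then invert $\tanh'$ on the branch $[0,\infty)$ and use evenness to get the $\pm$. Your version is slightly more complete, because you derive the necessity of $W_l\le -\frac{1}{\alpha}$ from the range $(0,1]$ of $\tanh'$, where the paper simply assumes it in its chain of equivalences, and you make explicit the tacit hypothesis $\widetilde{W}_{L+1}\neq 0$ that the ``only if'' direction needs.
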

\begin{proof}
   The function $\tanh'(y) = \cosh^{-2}(y)$ is an even function and bijective on $\R_{+}:=[0,+\infty)\to (0,1]$ and $(-\infty,0] \to
    (0,1]$. Then, with the notation $a_l(x)=W_l h_l(x) + b_l$, and assuming $W_l \leq \frac{-1}{\alpha}$, we get from directly differentiating \eqref{eq:ResNetGeneral1D} (cf.\ Lemma~\ref{lem:resnet_Dl_fullrank}) that
        \begin{align*}
          &\frac{\partial h_{l}(x)}{\partial h_{l-1}(x)} =  \eps + \delta W_l\tanh'(a_l(x)) = 0  \numberthis \label{eq:hlprime}\\
          &\iff  a_l(x) = (\tanh'_{\R_+})^{-1}\left(-\frac{1}{\alpha W_l}\right) \quad \text{or} \quad a_l(x) = -(\tanh'_{\R_+})^{-1}\left(-\frac{1}{\alpha W_l}\right) \\
         &\iff   h_{l-1}(x) =\frac{1}{W_l} \left(\pm (\tanh'_{\R_+})^{-1}\left(-\frac{1}{\alpha W_l}\right) - b_l \right),\numberthis \label{eq:hlcondition}
    \end{align*}
    where $\pm$ is to be interpreted as positive \emph{or} negative sign.
\end{proof}

\begin{remark}~\label{rem:tanheven}
   \begin{itemize}
       \item Lemma~\ref{lem:b0condition} confirms that for $\alpha \approx 1$, critical points can be embedded with reasonable bounds on the magnitude of the model parameters.
       
       \item The crucial restriction in Lemma~\ref{lem:b0condition} is the necessity of a negative layer parameter, namely $W_l \leq \frac{-1}{\alpha} < 0$. The following implementations will show that the trained model approximations strongly depend on the sign of the different layer weights at initialization. The reason is that in our setting, the SGD iterations rarely change the weight signs during training. This relates to the influence of zero \emph{input} gradients on the vanishing \emph{parameter} gradient problem. These observations raise further questions regarding the implicit regularization of SGD in this context, which remain beyond the scope of this work.

       \item Equation~\eqref{eq:hlcondition} also shows that, as $\tanh'$ is an even function, non-degenerate zeros of the layer-wise derivative appear in pairs, when considering $h_{l-1} \in \R$ and $a_l \neq 0$. This can have the unwanted effect that the number of generated critical points ``multiplies'' from layer to layer (cf.~Figure~\ref{fig:1d2lresnet20}).
   \end{itemize} 
\end{remark}

\subsubsection{Implementation of Two-Layer ResNets}\label{subsubsec:1d2l}
We consider one-dimensional ResNets $\Phi \in \textup{RN}^1_{\eps,\delta, \sigma, \textup{N}}((-1,1),\R)$ of the form \eqref{eq:inputtooutput} with two layers $L =2$. Recalling that $a_1: = W_1x+b_1$ and $a_2: = W_2 h_1+b_2$, the two-layer model is explicitly given as
    \begin{align*}
        \Phi(x) &= \widetilde{W}_3 \big[ \eps h_1 + \delta \tanh(a_2)\big]+\tilde{b}_3 \\
        &= \widetilde{W}_3 \big[ \eps^2 x + \eps \delta \tanh(W_1 x + b_1) + \delta \tanh(W_2 h_1 + b_2)\big]+\tilde{b}_3 \\
         &= \widetilde{W}_3 \big[ \eps^2 x + \eps \delta \tanh(W_1 x + b_1) + \delta \tanh(W_2 (\eps x + \delta \tanh(W_1 x + b_1)) + b_2)\big]+\tilde{b}_3.\numberthis \label{eq:resnet1d2l}
    \end{align*}
    While the two-layer model remains structurally simple, it reveals several expressivity features that depend on the ratio between skip and residual channels. Additionally, these cases show that the ability to embed critical points does not, by itself, guarantee optimal model performance. We further point out that the trained models are deliberately chosen to highlight certain phenomena that \emph{may} occur during training for certain parameter initializations. These examples represent possible qualitative behaviors rather than a general statistical characterization of the training outcome. \medskip

\noindent\emph{Plots:} The presented images for the different ResNet realizations below each contain three subplots. The left plot is the target function approximation. The middle plot depicts the trained model parameter values of each layer where the index $3$ denotes the parameters of the affine output layer. The plot on the right-hand side shows the layer-wise derivative $\frac{\partial h_l}{\partial h_{l -1}}$ for $l = 1,2,3$ with the same notation as in Section~\ref{subsec:1dexamples}. Each non-degenerate zero of the layer-wise derivative, which are marked with a red circle in the right-hand side plots, generates at least one critical point of $\Phi$ on $\R$. Specifically, as the layer-wise derivative is even, see \eqref{eq:hlprime}, non-degenerate zeros of the layer-wise derivative always appear in pairs.

\begin{itemize}
   \item   \emph{Case $\eps = 1, \delta = 1, \alpha = 1$; Figure~\ref{fig:1d2lresnet}}: We observe that the trained ResNet with $\alpha = 1$ is able to approximate the target function well, with small approximation error in relation to the low model complexity. Both layer weights $W_1$ and $W_2$ have negative signs. According to Lemma~\ref{lem:b0condition} only the second layer generates a critical point, as $W_2 < -1$. The right-hand side plot further shows two critical points are generated in the second layer of which only one is ``active'' in the interval $(-1,1)$ of the full input-output map $\Phi$.

 \item \emph{Case $\eps = 20, \delta = 20, \alpha = 1$; Figure~\ref{fig:1d2lresnet20}}: This ResNet has channel ratio $\alpha = 1$ as in Figure~\ref{fig:1d2lresnet}. The comparison highlights that while the ratio of skip and residual channel determines the embedding capability of critical points, it is not the only aspect that influences the expressivity of the model. This becomes obvious when examining $\Phi$'s explicit form \eqref{eq:resnet1d2l}, or recalling Lemma~\ref{lem:resnet_inputoutput} for the general case. In the presented case, the trained model $\Phi$ displays catastrophic approximation results with too many critical points (cf. the last item in Remark~\ref{rem:tanheven}). Both the first layer and the second layer generate a pair of critical points. In the final output function, the combined layers lead to a total of eight critical points due to the layered structure of $\Phi$. This behavior stems from the gradient of $\Phi$ in \eqref{eq:resnet_gradient}. Since each layer appears as a factor, the critical points of individual layers aggregate. Specifically, the two critical points of the second layer are each attained at two distinct input values. Let $h^*$ be one of the two critical points of the map $h \mapsto h_2(h)$, such that $\frac{\partial h_2}{\partial h_1}(h^*) = 0$. Then, there exist two inputs $x_{1,2} \in (-1,1)$ satisfying $h_1(x_{1,2}) = h^*$. Consequently, the factor in $\nabla_x \Phi$ corresponding to the second layer generates four critical points $x_{1,2,3,4} \in (-1,1)$ via:
\begin{equation}
\frac{\partial h_2}{\partial h_1}(h_1(x_{i})) = 0, \quad i \in \{1, \dots, 4\}.
 \end{equation}

    \begin{figure}[t]
        \centering
        \includegraphics[width=0.7\linewidth, trim=0 0 0 0, clip]{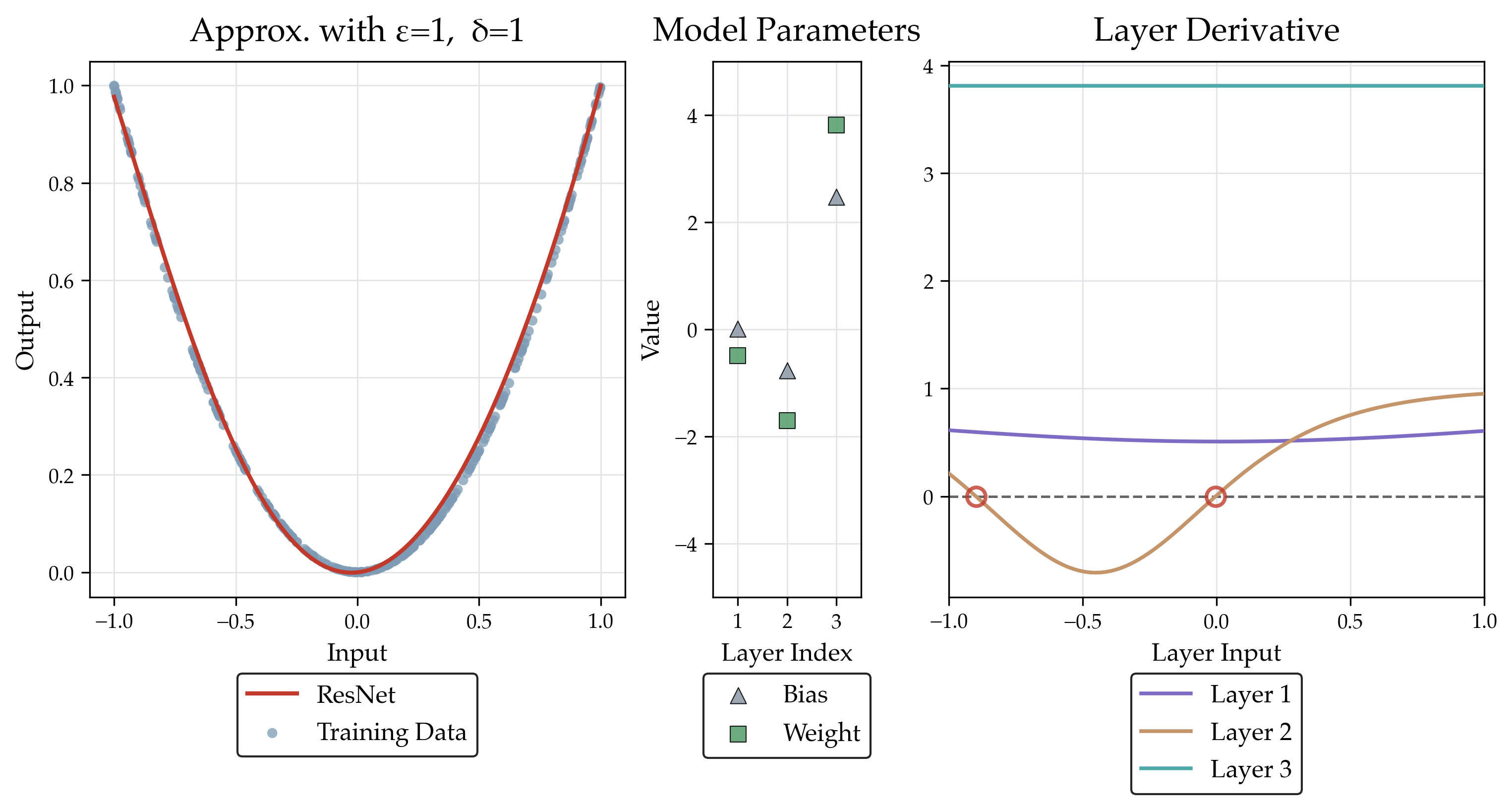}
        \caption{Trained ResNet of the form \eqref{eq:resnet1d2l} with $\eps = \delta = \alpha = 1$.\label{fig:1d2lresnet}}
    \end{figure}
    \begin{figure}
        \centering
        \includegraphics[width=0.7\linewidth, trim=0 0 0 0, clip]{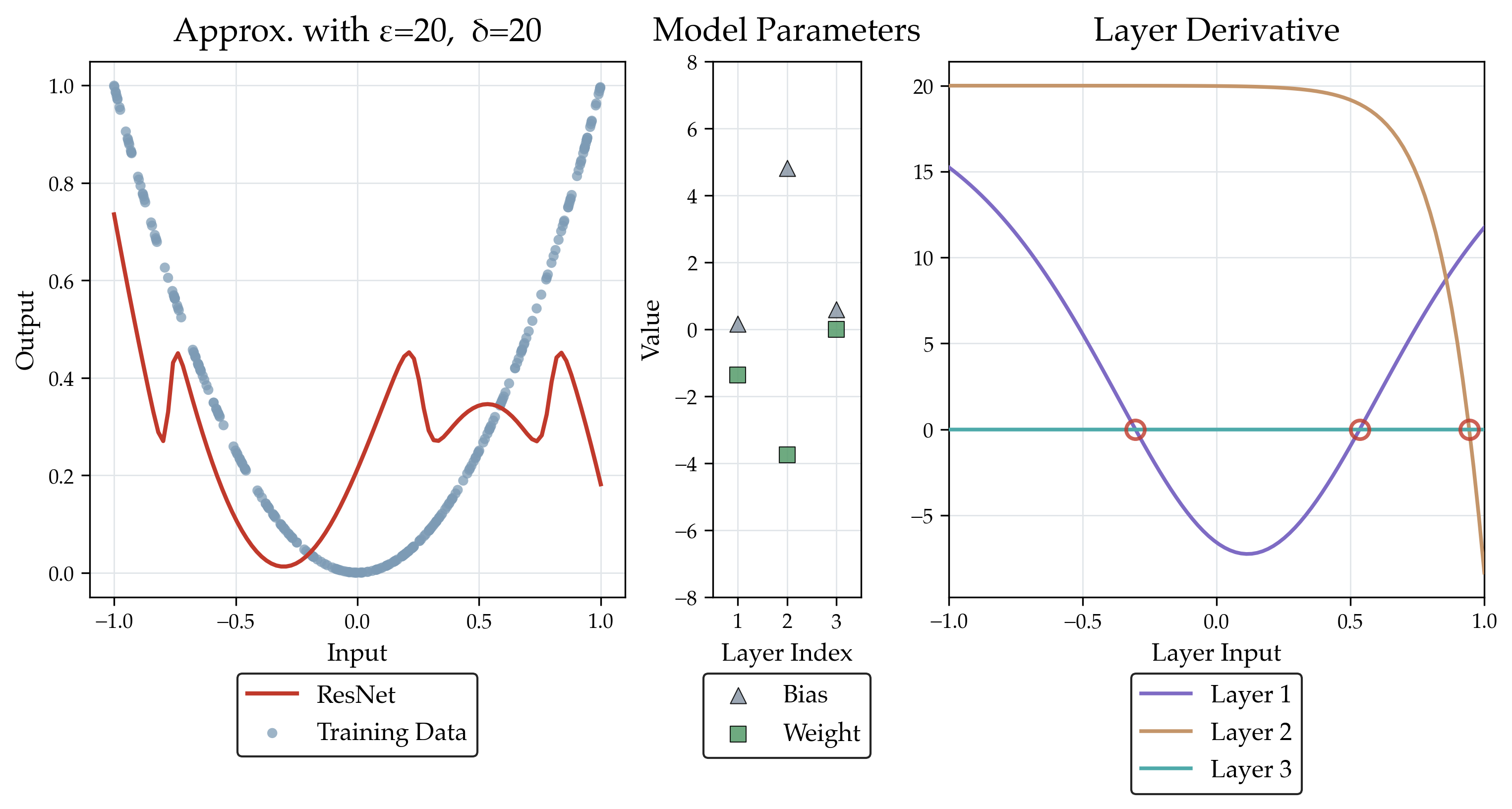}
        \caption{\small Trained ResNet \eqref{eq:resnet1d2l} with equal channel ratio, $\alpha = 1$, but $\eps=20$ and $\delta = 20$.\label{fig:1d2lresnet20}}
    \end{figure}

    \item \emph{Case $\eps = 1, \delta = 0.3, \alpha = 0.3$; Figure~\ref{fig:1d2lresneteps1deltap3}:} This case is representative of the regime $\alpha \ll 1$. In accordance with the assumptions of Theorem~\ref{th:resnet_criticalpoints_alphasmall}, we observe large weight magnitudes that compensate for the small channel ratio $\alpha = 0.3$. Nonetheless, the model fails to approximate the function well. The error is large even though critical points are generated in the first layer. This highlights that expressivity depends on more than just the ability to express critical points. Looking at \eqref{eq:resnet1d2l} directly, reveals that the linear term is dominant, as each other nonlinear term is bounded by $|\delta|$. This leads to $\Phi$ being almost piecewise-linear and hence unable to approximate $f(x) = x^2$ accurately.

      
  \item \emph{Case $\eps = 0.01, \delta = 1, \alpha = 100$;     Figure~\ref{fig:1d2lmlp}:} This case represents the regime $\alpha \gg 1$, where the ResNet's structure is close to that of non-augmented MLPs. Indeed, the plotted $\Phi$ resembles the one-layer MLP from above with $\eps = 0$, depicted in Figure~\ref{fig:1d1lresnet}\subref{subfig:eps0}. However, in contrast to MLPs, the case $\eps = 0.01$ still allows critical points. The right-hand side plot illustrates this, as the derivative of the second layer has a zero, in line with Lemma~\ref{lem:b0condition}. This does not have a significant impact on the expressivity of the ResNet. Because this ResNet closely approximates the MLP architecture, it shares a nearly identical expressive profile.

Let us summarize the additional observations regarding ResNet expressivity that we gathered from the two-layer case study:
\begin{itemize}
    \item \emph{Which layer in the model hierarchy generates a critical point matters.} Whether a critical point is generated in the first or second layer can lead to qualitatively very different outcomes in the input-output map $\Phi$. Specifically, due to the (anti-)symmetry of the activation function $\sigma = \tanh$, critical points present in the first layer can be ``multiplied'' by the second layer, specifically for a dominant skip parameter $\eps$.
    \item \emph{Not only $\alpha$ but also the values of $\eps$ and $\delta$ matter.} The channel ratio $\alpha = \frac{\delta}{\eps}$ governs the structural capacity to embed critical points, but the absolute scales of $\eps$ and $\delta$ are equally relevant for the model's expressivity (cf.~Remark~\ref{rem:scaling_asymmetry}). A small $\delta$ forces $\Phi$ toward near piecewise linear behavior (regardless of $\alpha$). Large values of $\eps$ (regardless of $\alpha$) can lead to catastrophic approximation.

    \item \emph{Initialization matters.} In our one-dimensional setting the parameter training keeps the sign of the weights from initialization unchanged. This leads to SGD convergence to sign-based local minima. There are various design choices that can circumvent this problem, such as increasing the parameter dimension $m_l$ in \eqref{eq:resnet_typical_f}. Nonetheless, it shows that the parameter regime at initialization strongly influences the trained final outcome due to implicit regularization effects of SGD based algorithms \cite{MWSB24}.
\end{itemize}

      \begin{figure}[t]
        \centering
        \includegraphics[width=0.7\linewidth, trim=0 0 0 0, clip]{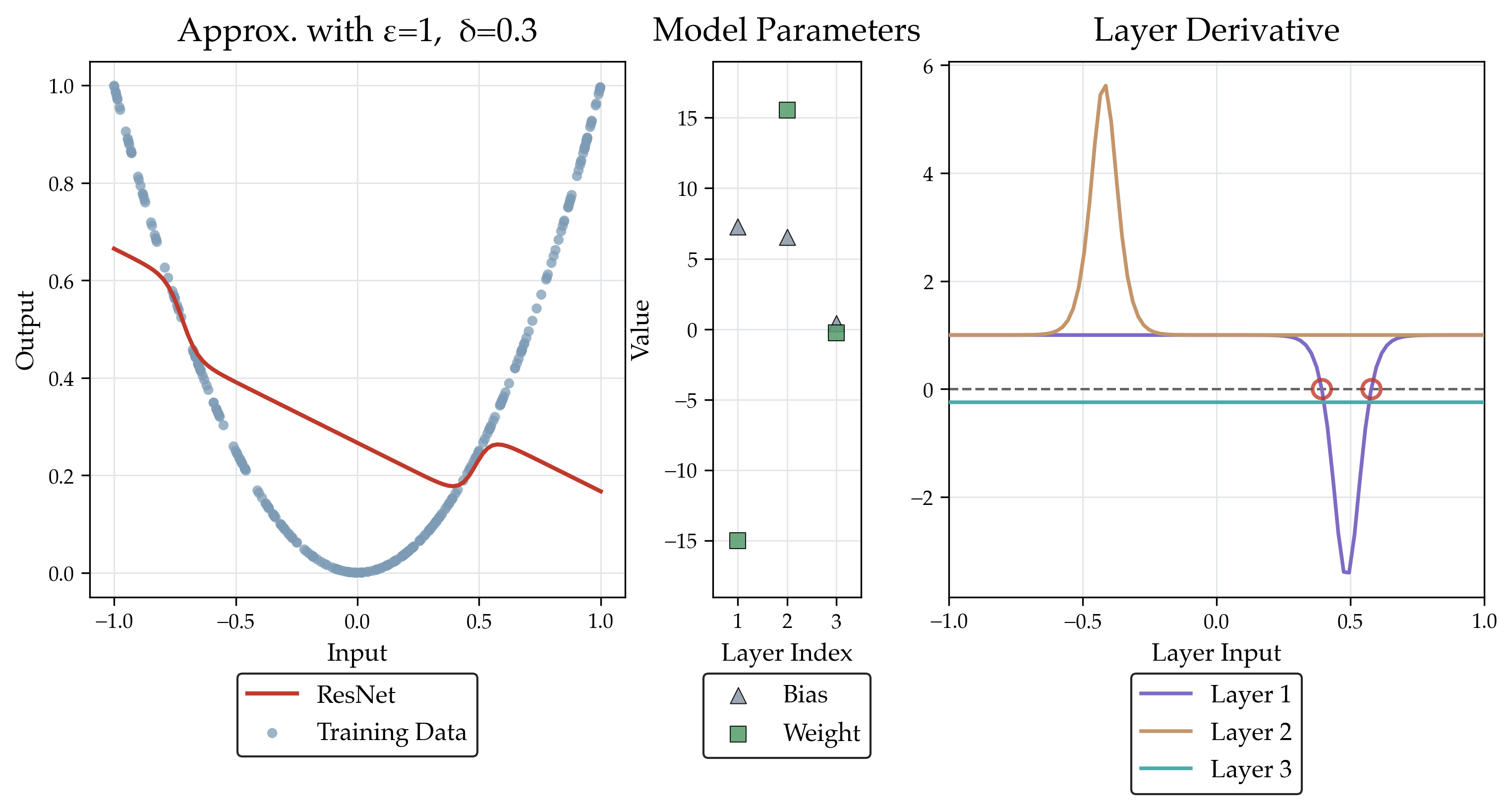}
        \caption{Trained ResNet \eqref{eq:resnet1d2l} with small residual term, $\eps = 1$ and $\delta = 0.3$. \label{fig:1d2lresneteps1deltap3}}
    \end{figure}

            \begin{figure}
        \centering
        \includegraphics[width=0.7\linewidth, trim=0 0 0 0, clip]{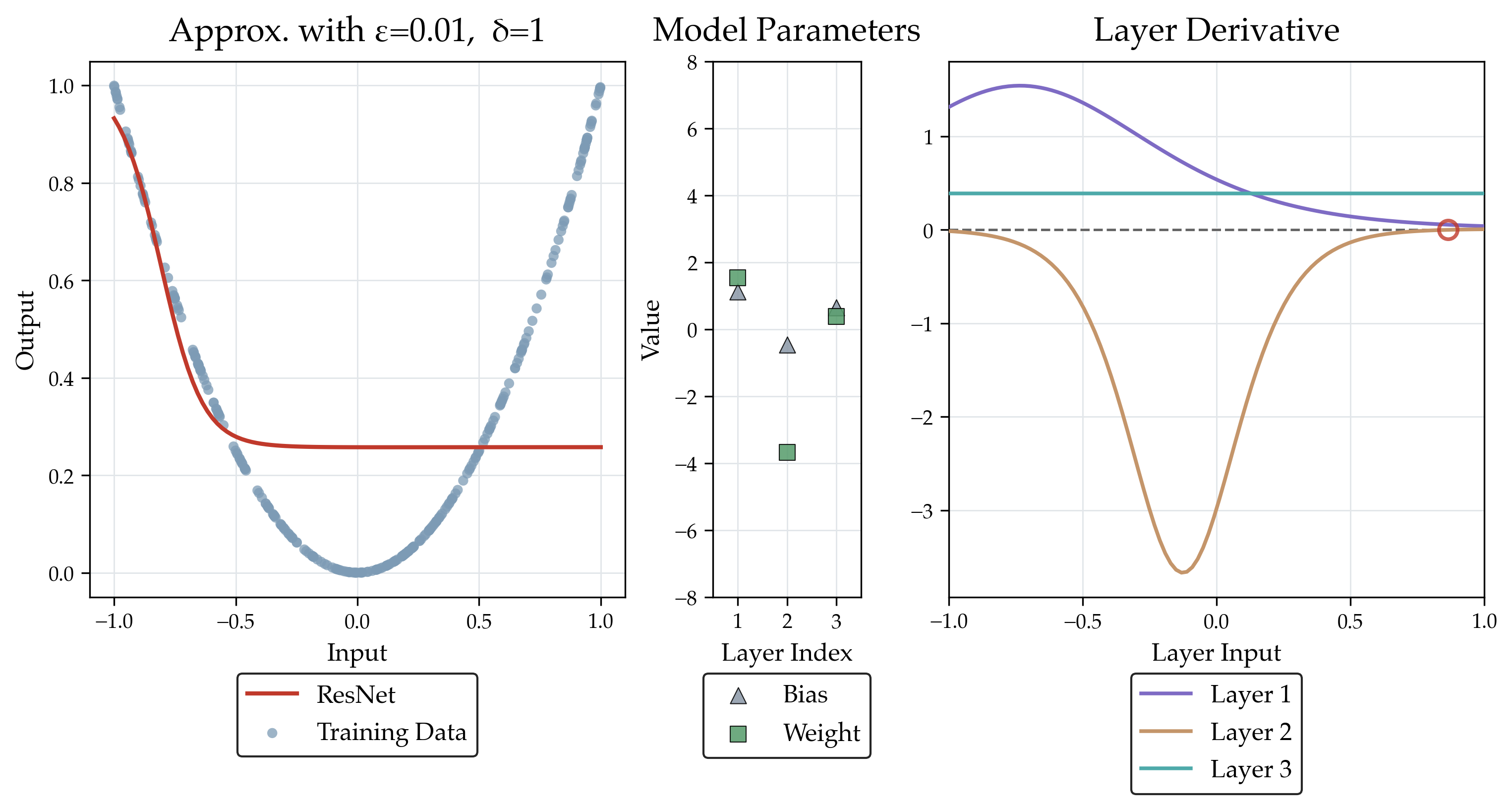}
        \caption{Trained ResNet \eqref{eq:resnet1d2l} close to MLPs with $\eps = 0.01$, $\delta = 1$. \label{fig:1d2lmlp}}
    \end{figure}
\end{itemize}

\subsection{Two-Dimensional ResNets}

In this subsection, we consider ResNets to solve classification tasks for two-dimensional toy datasets. This illustrates how the universal approximation constraints of non-augmented ResNets manifest in SGD-trained networks. Specifically, the results demonstrate the ``tunnel effect'' in various regimes analyzed in the previous sections.

These two-dimensional experiments correspond to the approximation of the prototypical functions $f: \R^2 \to \R$ that exhibit non-degenerate critical points:
    \begin{equation}\label{eq:Psi_prototypical}
        \Psi_\text{circ}(x_1,x_2) = x_1^2+x_2^2-0.5\quad \text{and}\quad  \Psi_\text{xor}(x_1,x_2) = x_2^2-x_1^2-0.5.
    \end{equation}
    Both have a single critical point at $x_1 = x_2 = 0$, the function $\Psi_\text{circ}$ a minimum, and $\Psi_\text{xor}$ a saddle point. These two functions correspond to the prototypical toy datasets of \texttt{Circle} and \texttt{XOR} datasets via their level sets as shown in Figure~\ref{fig:datasets}. For any model to successfully classify the dataset for an arbitrary number of points (Definition~\ref{def:binary_classification}), it must effectively approximate these functions and hence embed the corresponding critical points.

    \begin{figure}[h]
	\centering
	\begin{subfigure}{0.45\textwidth}
		\centering
		\begin{overpic}[scale = 0.5,tics=10]
			{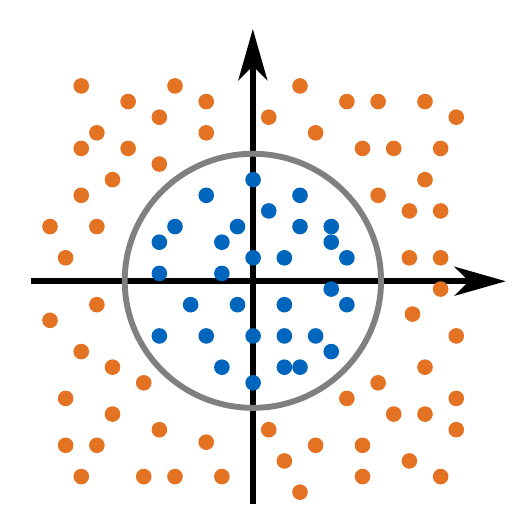}
			\put(97,45.5){\textcolor{black}{$x_1$}}
            \put(44,97){\textcolor{black}{$x_2$}}
		\end{overpic}
		\caption{\texttt{Circle} dataset with $\Psi_\text{circ}(x_1,x_2) = x_1^2+x_2^2-0.5$.}
        \label{fig:datasets_a}
	\end{subfigure}
    \hspace{3mm}
	\begin{subfigure}{0.45\textwidth}
		\centering
		\begin{overpic}[scale = 0.5,tics=10]
			{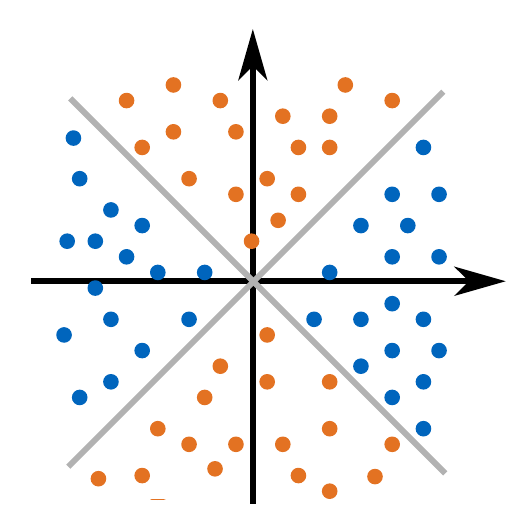}
			\put(97,45.5){\textcolor{black}{$x_1$}}
            \put(44,97){\textcolor{black}{$x_2$}}
		\end{overpic}
		\caption{\texttt{XOR} dataset with $\Psi_\text{xor}(x_1,x_2) = x_2^2-x_1^2-0.5$.}
        \label{fig:datasets_b}
	\end{subfigure}
	\caption{Two-dimensional datasets defined as sub- and super-level sets of the prototypical functions~\eqref{eq:Psi_prototypical}: Points with $\Psi_i(x)<0.5$ have a blue label and points with $\Psi_i(x)>0.5$ have an orange label, $i \in \{\text{circ},\text{xor}\}$. Figure \ref{fig:datasets}\subref{fig:datasets_a}-\subref{fig:datasets_b} is adapted from \cite[Figure 4(a)-(b)]{Chemnitz2025}.}
	 \label{fig:datasets}
\end{figure}

\subsubsection{Details on the Implementations}

\noindent \emph{Parameter Training:} 1400 training points are drawn from the \texttt{Circle} or \texttt{XOR} datasets as illustrated in Figure~\ref{fig:datasets}. Parameter training is initialized with Xavier uniform distribution and optimized with cross-entropy loss and the stochastic gradient descent based Adam algorithm with batch size 128 and learning rate $0.01$. We use batch normalization during training to improve trainability, which is particularly helpful in the MLP regimes. Note that the added batch normalization is consistent with the general model structure of Definition~\ref{def:resnet} at the model evaluation phase. The chosen examples are rather best picks than average training outcomes, as we focus on possible expressivity while keeping the number of parameters to a minimum.\\

\noindent \emph{ResNet Structure:} The trained models are all based on the class $\textup{RN}_{\eps,\delta, \sigma}^1((-2.5,2.5)^2, (0,1))$ from Definition~\ref{def:resnet}. Specifically, we consider 
\begin{equation}\label{eq:2dphi}
        \Phi(x) = \tilde{\lambda}(h_L(\lambda(x))), \quad x\in(-2.5,2.5)^2,
    \end{equation}
    with residual functions
    \begin{equation}\label{eq:2dresfun}
        f_l(h_{l-1}, \theta_l) = \tanh(W_l h_{l-1} + b_l), \quad l\in \{1,\ldots, L\}.
    \end{equation}
    This means we only consider simplified residual functions with $\Wt_ l = \textup{Id}_{2}$, $\bt_l = 0$ from which it follows that $W_l \in \R^{\nhid \times \nhid}$, and $b_l \in \R^{\nhid}$. The input dimension $\nin = 2$ and output dimension $\nout = 1$ are fixed for all examples. The input layer is nonlinear of the form $\lambda(x) = \tanh(W_0 x + b_0)$, with $W_0 \in \R^{\nhid \times 2}$, $b_0 \in \R^{\nhid}$ and the output layer $\tilde{\lambda}(y) = \operatorname{sigmoid} (\Wt_{L+1} y + \bt_{L+1})\in (0,1)$ with $\Wt_{L+1} \in \R^{1\times \nhid}$, $\bt_{L+1} \in \R$ normalizing the final outputs to classification probabilities.
    \\

    \noindent \emph{Plots}: The provided plots depict the prediction probability level sets, i.e., the output value of the map $\Phi :\R^{2} \to (0,1)$ for each input in $x\in (-2.5, 2.5)^2$. The left-hand side plots additionally include test data drawn independently from the same data distribution as the training points. The right-hand side reduces the plots to the contours of the level sets to emphasize the (non-)existence of critical points corresponding to the (non-)existence of bounded level sets.

\subsubsection{Discussion of Numerical Examples}
\begin{itemize}
    \item \emph{Case $\alpha \ll 1$, $\eps = 1, \delta = 0.1$; Figure~\ref{fig:2dnode}}: Trained is a non-augmented ResNet with structure $\Phi \in \textup{RN}_{1,0.1,\sigma, \textup{N}}((-2.5,2.5)^2, (0,1))$ with $\nhid = 2$ on the \texttt{Circle} dataset. This model is approaching the neural ODE regime. The ResNet has 20 hidden layers, $L = 20$, of type \eqref{eq:2dresfun} with skip parameter $\eps = 1$ and residual parameter $\delta = \frac2L = 0.1$. This corresponds to an Euler discretized neural ODE on the time interval $[0,2]$ and step size $0.1$ (cf.~Proposition~\ref{prop:resnet_neuralODE}). As analyzed in Section~\ref{sec:node}, neural ODEs are unable to embed critical points and hence their input-output map is unable to express the desired blue disk around the origin. We further infer from Theorem~\ref{thm:constraints_nd} that this results in a blue ``tunnel'' towards the domain boundary (cf.~\cite{Dupont2019}).  Despite the $\Phi$ being a rather coarse discretization of the continuous neural ODE dynamics (with potentially large approximation error in sup-norm), the plotted ResNet behaves qualitatively very similarly to a neural ODE. It also forms a blue tunnel which results in the undesired misclassifications of orange data points.

    Theorem~\ref{th:resnet_criticalpoints_alphasmall} provides a deeper understanding of why the ResNet also forms a ``tunnel''. The trained ResNet is theoretically able to embed critical points by compensating the small channel ratio $\alpha = 0.1$ with weights of large magnitude (see Lemma~\ref{lem:resnet_Dl_fullrank}). However, the initialized parameters of moderate size stay in a moderate regime throughout the SGD-based training.  It follows that the condition $\alpha < \frac{1}{\nu_{\max} K_\sigma}$ of Theorem~\ref{th:resnet_criticalpoints_alphasmall}, which excludes the existence of critical points, stays relevant throughout training. This holds despite the possibly significant dynamical difference between the ResNet and its continuous neural ODE limit.

    \begin{figure}[t]
    	\centering 
    	\begin{subfigure}{0.4\textwidth}
    		\includegraphics[width=\textwidth, trim={0 2em 0 0},clip]{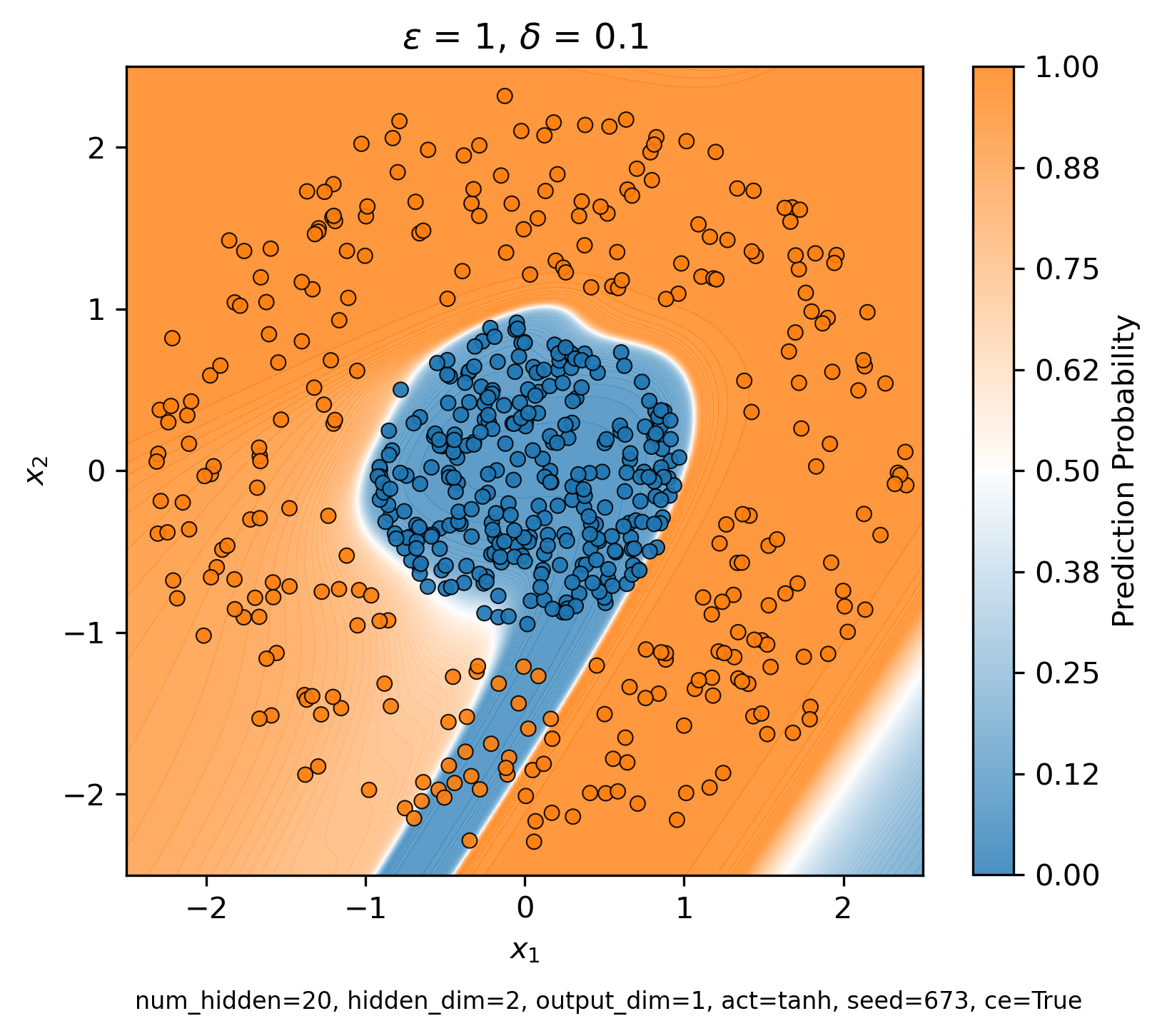}
    	\end{subfigure}
    	\begin{subfigure}{0.4\textwidth}
    		\includegraphics[width = \textwidth, trim={0 2em 0em 0},clip]{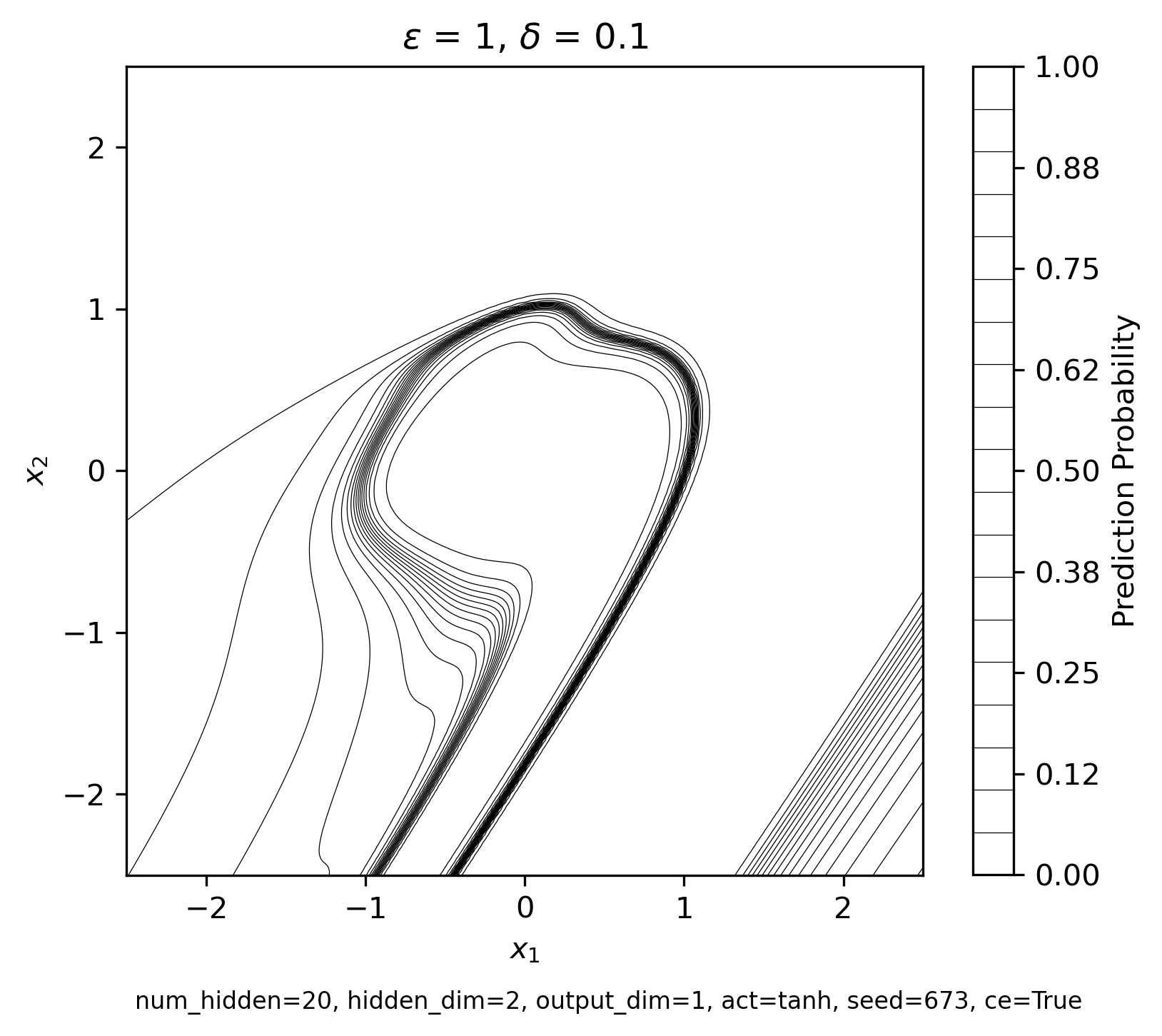}
    	\end{subfigure}
    	\caption{\small Prediction level sets of a non-augmented ResNet of the form \eqref{eq:2dphi} with $\eps = 1$ and $\delta = 0.1$ in the neural ODE regime. The model is unable to express critical points which leads to the blue ``tunnel''. \label{fig:2dnode}}
    \end{figure}

    \begin{figure}
    	\centering 
    	\begin{subfigure}{0.4\textwidth}
    		\includegraphics[width=\textwidth, trim={0 2em 0 0},clip]{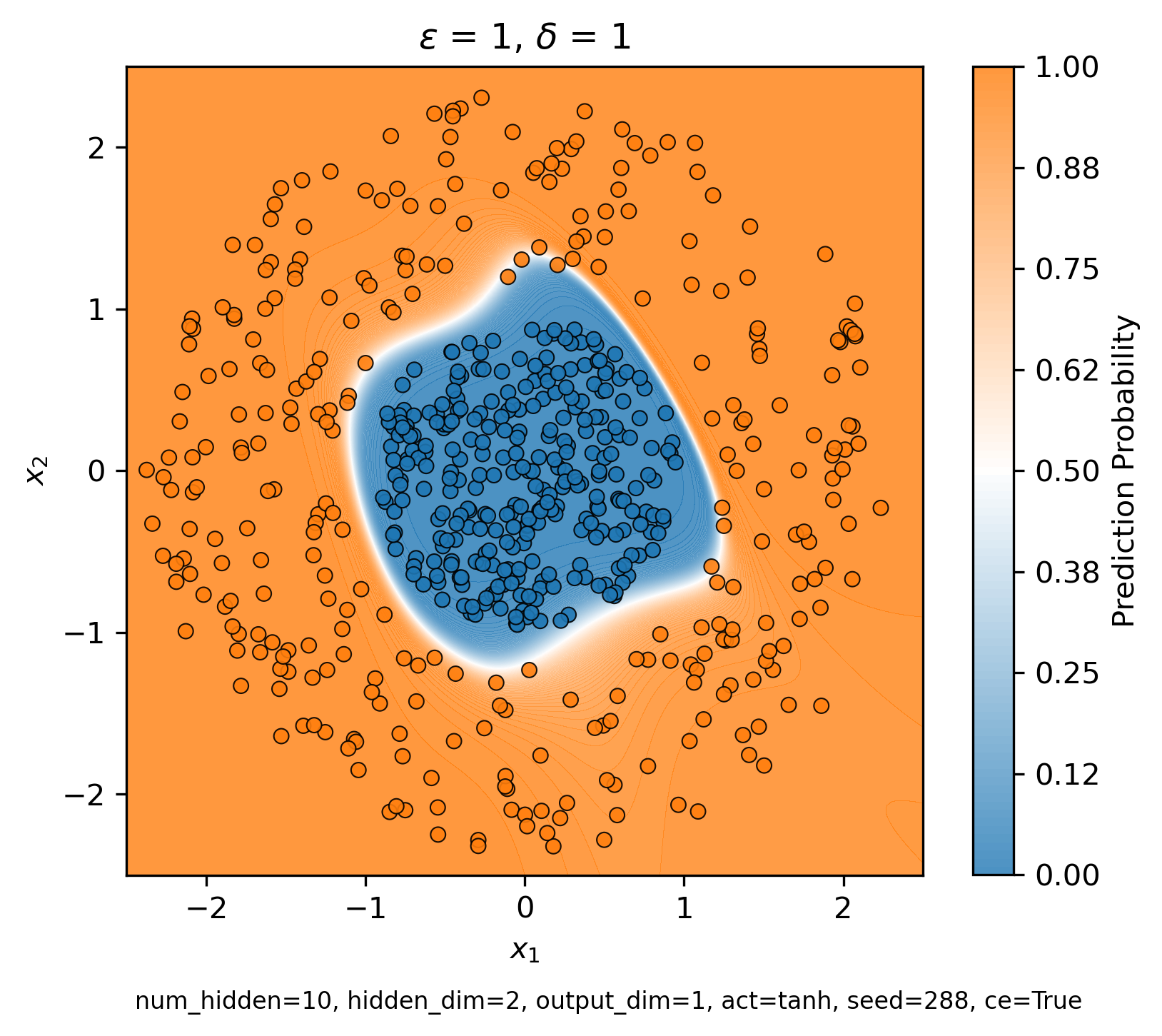}
    	\end{subfigure}
    	\begin{subfigure}{0.4\textwidth}
    		\includegraphics[width = \textwidth, trim={0 2em 0em 0},clip]{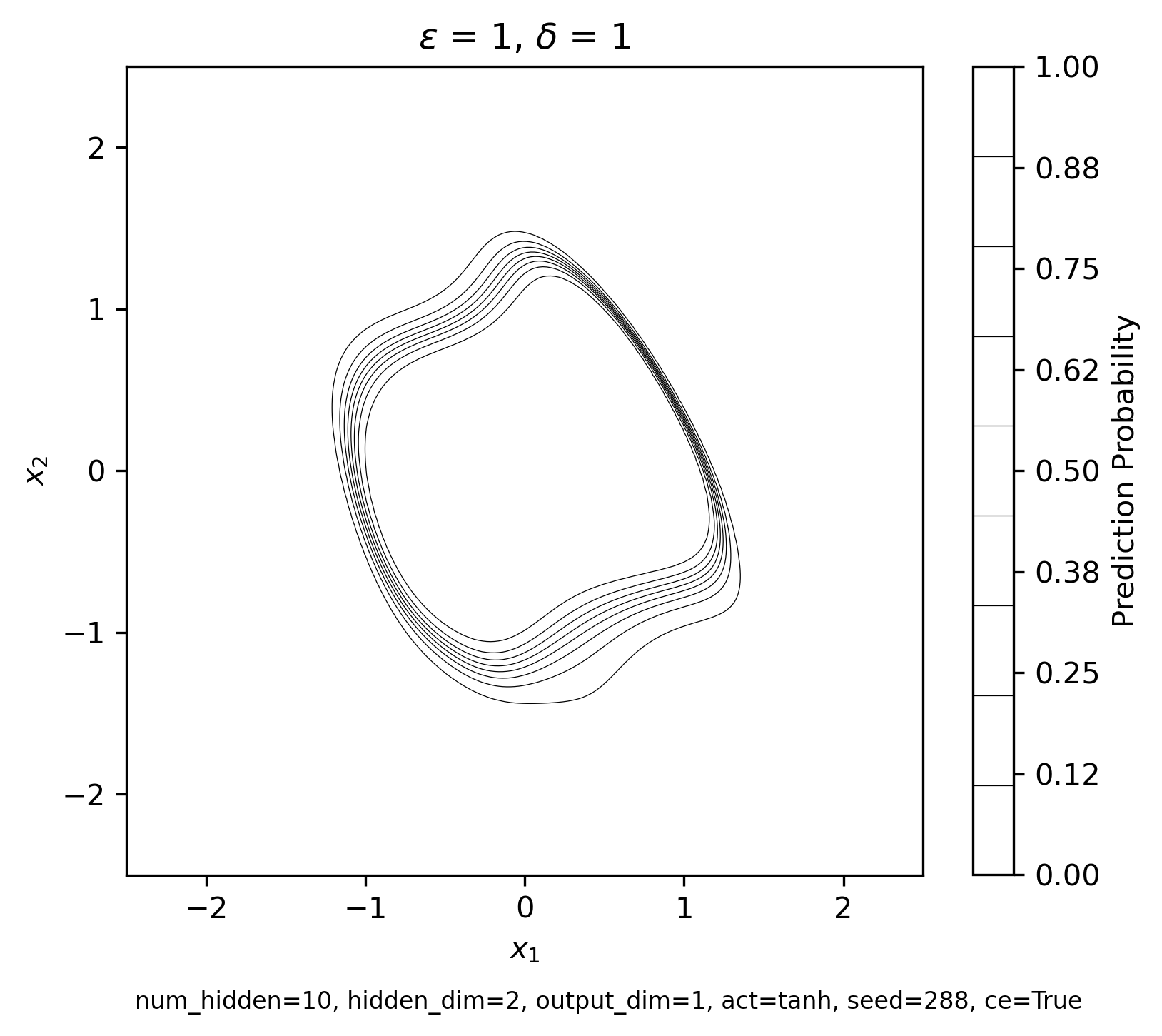}
    	\end{subfigure}
    	\caption{\small Prediction level sets of a non-augmented ResNet of the form \eqref{eq:2dphi} with balanced skip and residual channels $\eps = \delta = 1$. A critical point around the origin is present.  \label{fig:2drescirc}}
    \end{figure}

 \item \emph{Case $\alpha = 1$, \texttt{Circle} dataset; Figure~\ref{fig:2drescirc}:} Trained is a non-augmented ResNet with standard channel parameters $\eps = \delta = 1$, $L = 10$ and $\nhid = 2$ on the \texttt{Circle} dataset. The plots show that the model is able to express bounded blue level sets around the origin without the appearance of a ``tunnel'' and achieves high classification accuracy. As the input-output map is continuous, the bounded level sets further imply the existence of a critical point. This confirms that non-augmented ResNets with balanced channels are able to embed critical points in contrast to non-augmented MLPs (cf.~Figure~\ref{fig:intro}) and neural ODEs (cf.~Figure~\ref{fig:1d2lresnet20}). This represents the case ``in-between'' the neural ODE regime (Theorem~\ref{th:resnet_criticalpoints_alphasmall}) and the MLP regime (Theorem~\ref{th:resnet_criticalpoints_alphalarge}) where neither of both conditions on $\alpha$ to exclude critical points are satisfied (cf.\ the case $\alpha = 1$ of Section~\ref{subsec:1dexamples}).

      \item \emph{Case $\alpha \gg 1$, $\eps = 0.1, \delta = 1$; Figure~\ref{fig:2dmlp}:} This non-augmented ResNet structure is approaching the MLP regime with a dominant residual channel as $\alpha = 10$. The model with $L = 6$ and $\nhid = 2$ is trained on the \texttt{XOR} dataset (cf.\  Figure~\ref{fig:intro} for the \texttt{Circle} dataset case). The plot shows misclassifications close to the origin, as the model fails to approximate $f(x) = x^2_2 - x^2_1$ there, which requires a critical point in form of a saddle point. Instead, the orange level sets ``tunnel'' through the blue level sets. The outcome aligns with the conditions of Theorem~\ref{th:resnet_criticalpoints_alphalarge} that exclude critical points of $\Phi$ for large $\alpha$. The \texttt{XOR} case of a saddle point is not covered by Theorem~\ref{thm:constraints_nd}, as optimal level sets naturally intersect with the domain boundary. Nonetheless, the outcome is rather similar: the inability to embed the critical point leads to a tunnel and as a result degradation in accuracy. 

            \begin{figure}[t]
    	\centering 
    	\begin{subfigure}{0.4\textwidth}
    		\includegraphics[width=\textwidth, trim={0 2em 0 0},clip]{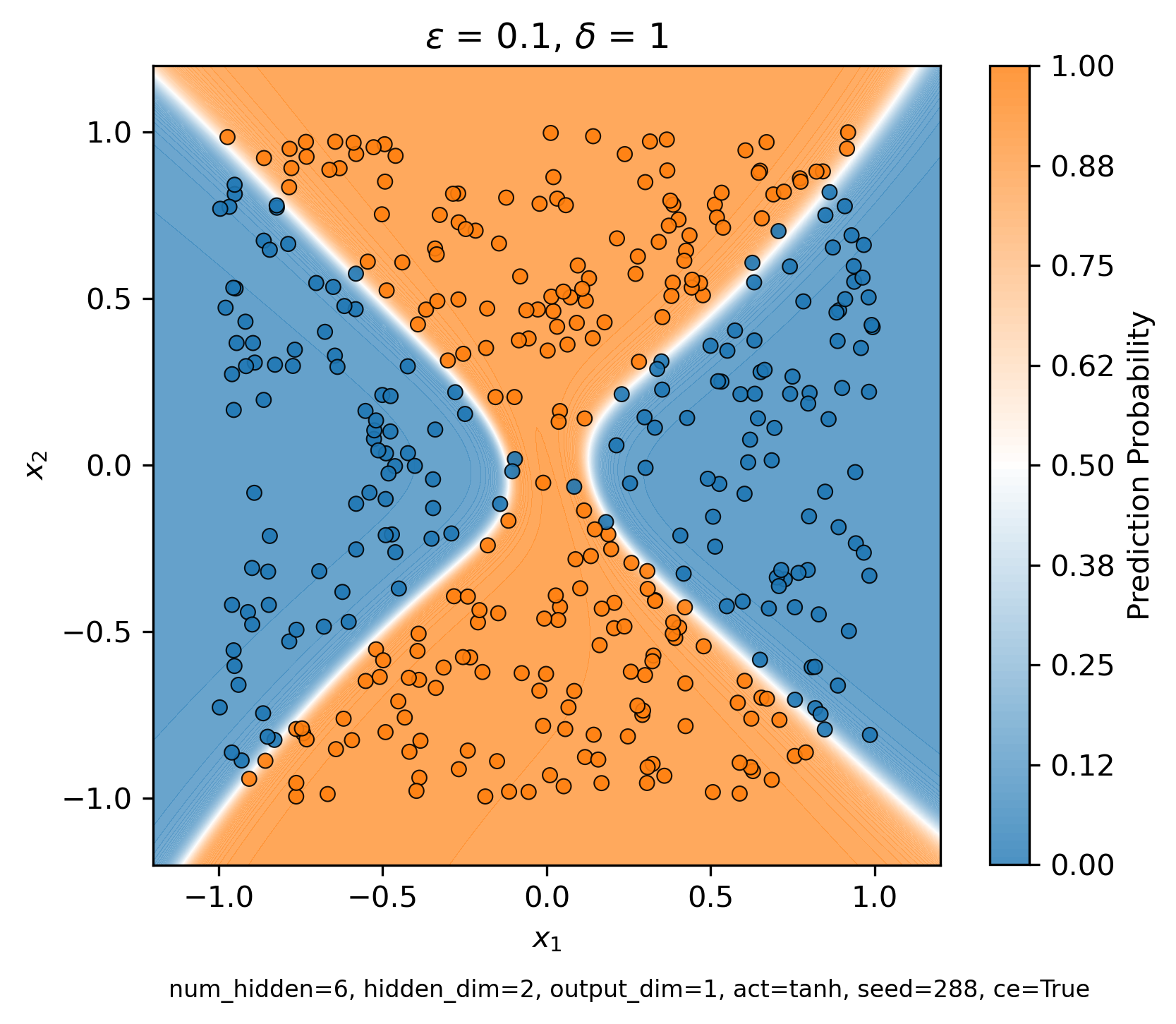}
    	\end{subfigure}
    	\begin{subfigure}{0.4\textwidth}
    		\includegraphics[width = \textwidth, trim={0 2em 0em 0},clip]{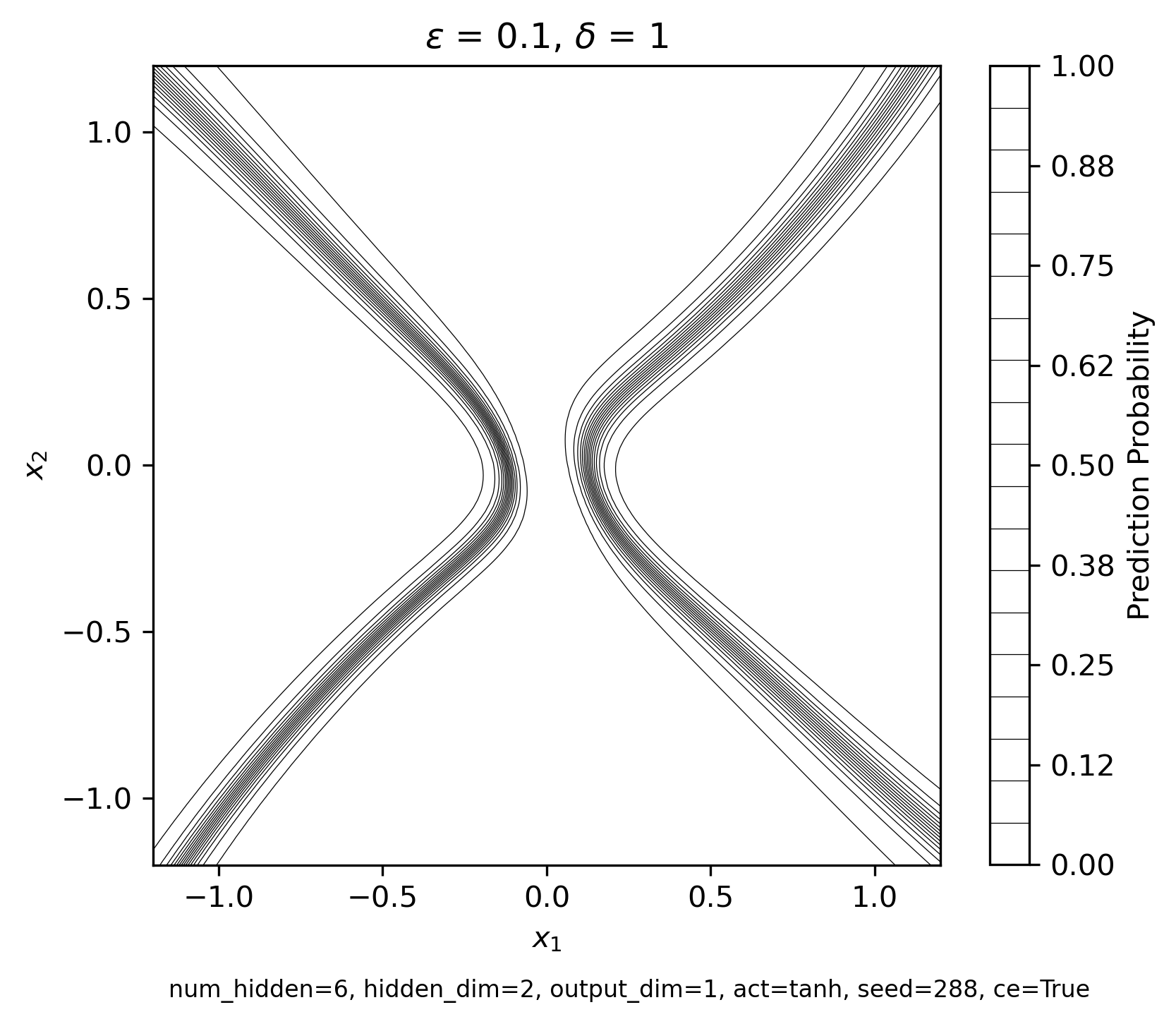}
    	\end{subfigure}
    	\caption{\small Prediction level sets of a non-augmented ResNet of the form \eqref{eq:2dphi} in the MLP regime, $\eps = 0.1$ and $\delta = 1$ which shows an undesired orange ``tunnel'' at the origin due to the absence of a critical point. \label{fig:2dmlp}}
    \end{figure}

 \begin{figure}
    	\centering 
    	\begin{subfigure}{0.4\textwidth}
    		\includegraphics[width=\textwidth, trim={0 2em 0 0},clip]{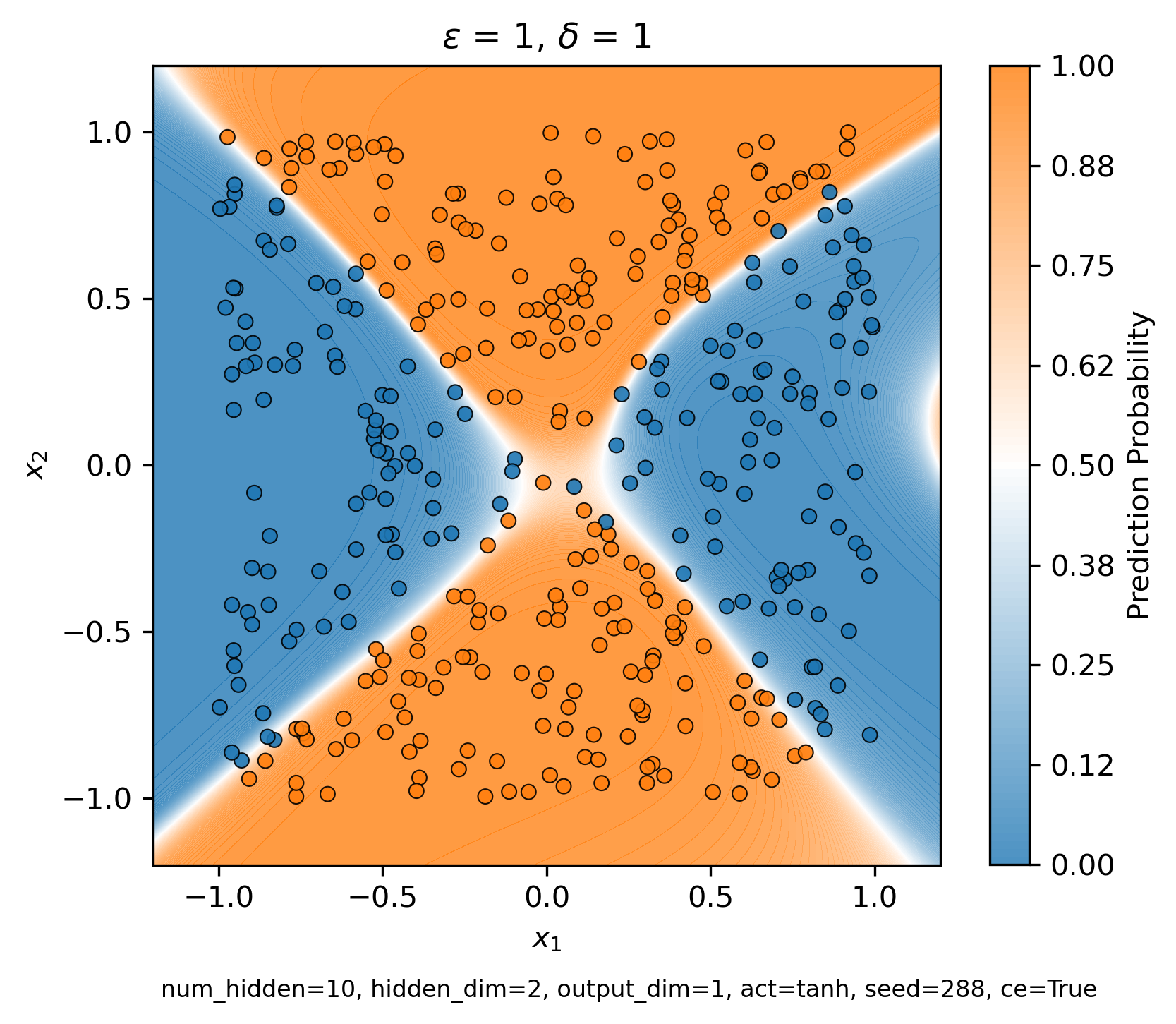}
    	\end{subfigure}
    	\begin{subfigure}{0.4\textwidth}
    		\includegraphics[width = \textwidth, trim={0 2em 0em 0},clip]{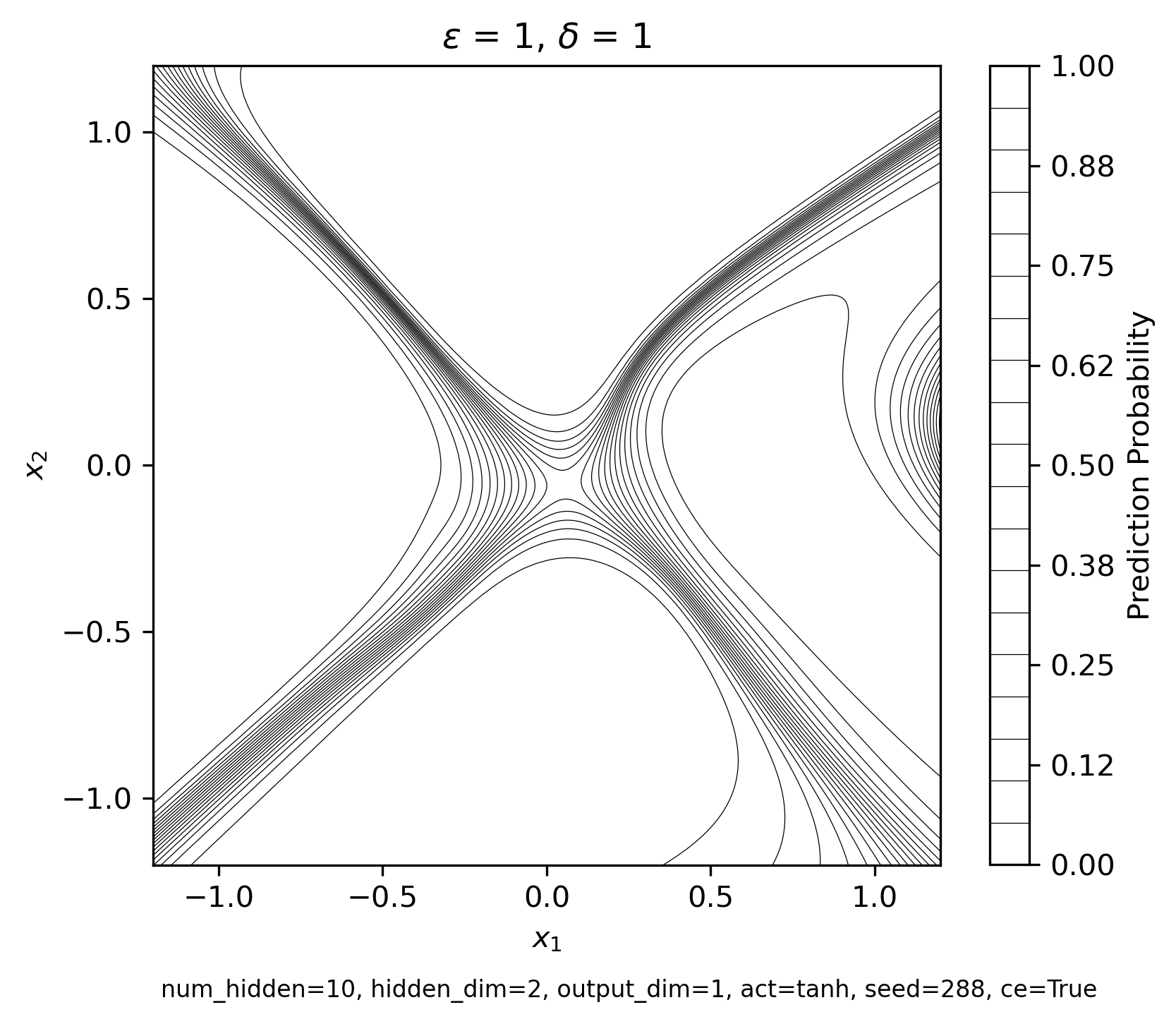}
    	\end{subfigure}
    	\caption{\small Prediction level sets of a non-augmented ResNet of the form \eqref{eq:2dphi} with balanced skip and residual channels $\eps = \delta = 1$. A critical point in form of a saddle point around the origin is present.  \label{fig:xorres}}
    \end{figure}

\item \emph{Case $\alpha = 1$, \texttt{XOR} dataset; Figure~\ref{fig:xorres}:} The non-augmented ResNet is trained on the \texttt{XOR} dataset with balanced channels $\eps = \delta = 1$ and $L=10$. As for the \texttt{Circle} dataset case above, the model embeds a critical point, in this case a saddle point, close to the origin.  As a result, it achieves superior classification accuracy compared to the case $\alpha = 10$ above and learns the desired topological structure

     \item \emph{Case augmented MLP, $\eps = 0, \delta = 1$; Figure~\ref{fig:2daugcirc}:} The final example depicts an augmented MLP, specifically $\Phi \in \textup{RN}_{0,1,\sigma, \textup{A}}((-2.5,2.5)^2,(0,1))$ with $\nhid = 3$ and $L = 1$. The model is trained on the \texttt{Circle} dataset and achieves high accuracy for the classification task by embedding a critical point close to the origin and generating bounded blue level sets without ``tunnel''. In this work, we do not analyze the augmented structure any further and refer to \cite{kk2025}. Compared to the ResNet cases with $\alpha = 1$, which manage to also embed a critical point, this augmented model requires far less parameters to achieve high accuracy results and is more robust across independent parameter initializations.

\end{itemize}

This numerical case study illustrates the existence of critical points of trained ResNets within the different channel ratio regimes. Notably, it confirms that our analysis of the $\alpha$ parameter regime holds even for small deviations from the standard ResNet case, where $\alpha = 1$. The experiments further demonstrate how the proven ``tunnel effect'' of Theorem~\ref{thm:constraints_nd} is reflected in the actual implementations. For the \texttt{Circle} dataset the absence of a critical point embedding corresponds to the blue level sets forming a tunnel towards the domain boundary with misclassifications. The \texttt{XOR} case is not directly covered by Theorem~\ref{thm:constraints_nd}, as the optimal level sets naturally intersect with the boundary. The implementations show that ResNets without a saddle point will form a tunnel through the origin.

     \begin{figure}
    	\centering 
    	\begin{subfigure}{0.4\textwidth}
    		\includegraphics[width=\textwidth, trim={0 2em 0 0},clip]{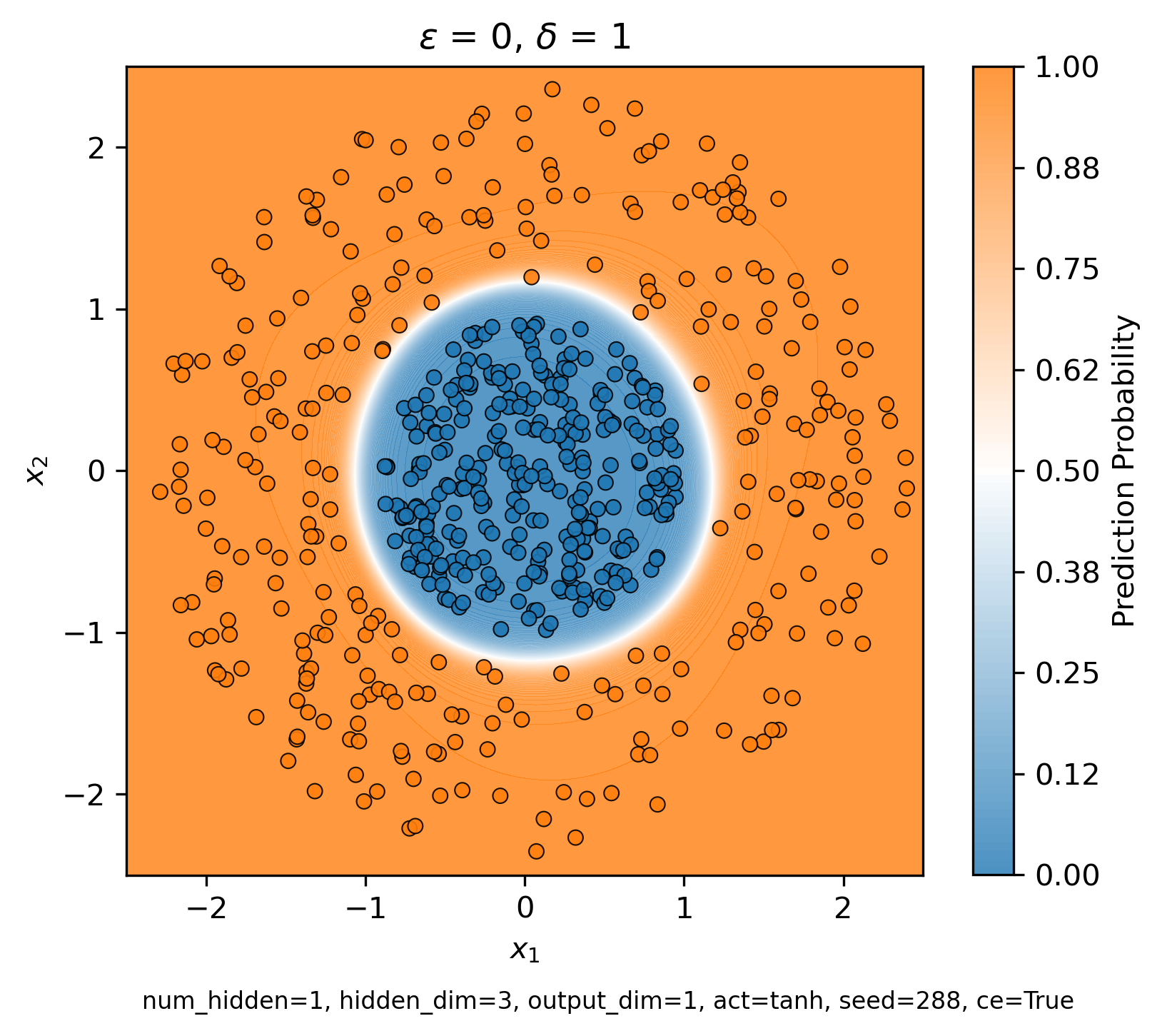}
    	\end{subfigure}
    	\begin{subfigure}{0.4\textwidth}
    		\includegraphics[width = \textwidth, trim={0 2em 0em 0},clip]{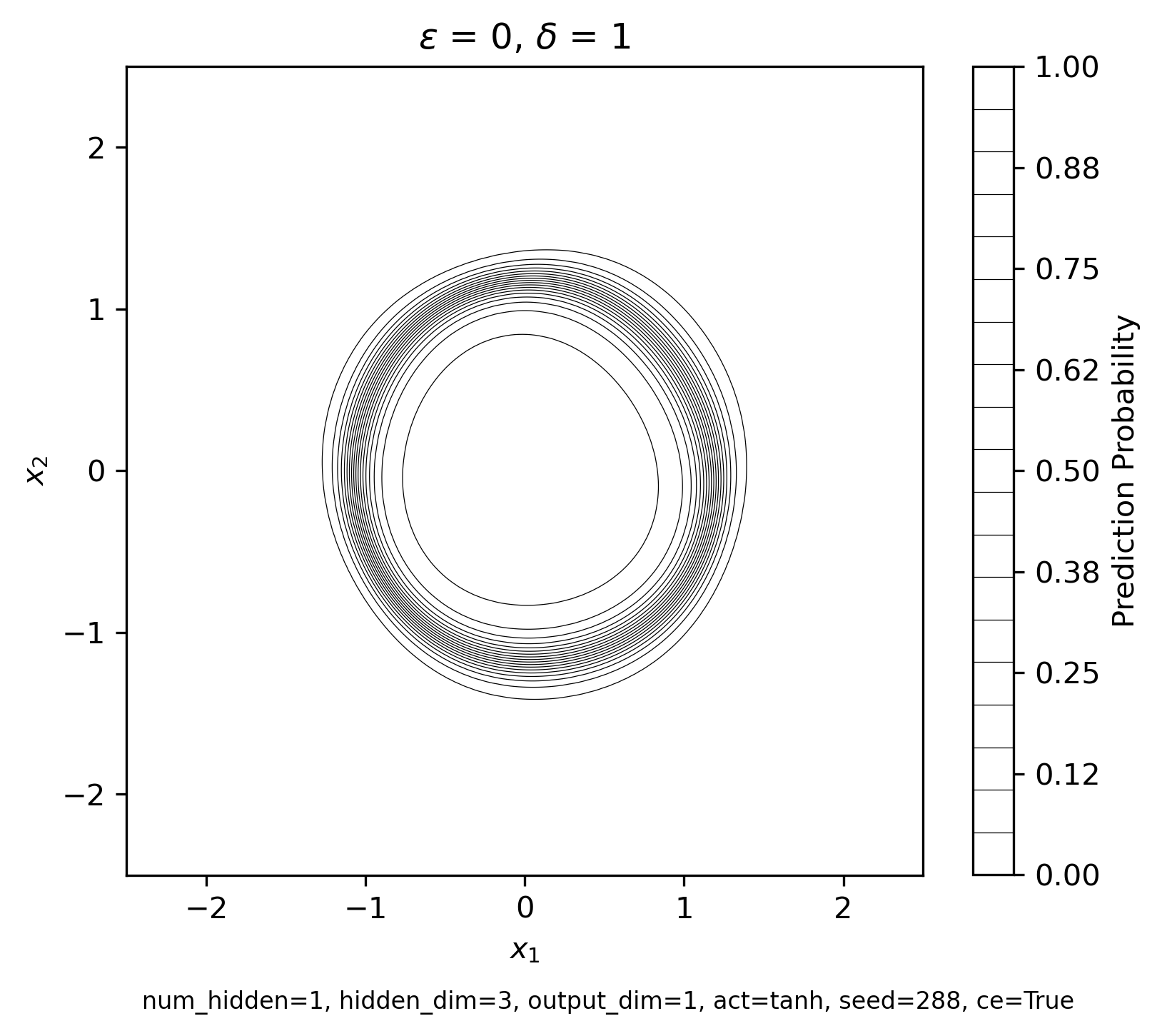}
    	\end{subfigure}
    	\caption{\small Prediction level sets of an augmented MLP of the form \eqref{eq:2dphi} with $\nhid = 3$, $\eps = 0$ and $\delta = 1$. The embedding of a critical point in form of a minimum around the origin is achieved. \label{fig:2daugcirc}}
    \end{figure}

\section{Conclusion}
In this work, we investigated the universal approximation limitations of non-augmented (or narrow) ResNets, focusing on their ability to express critical points. By parameterizing the layer update rule with a skip parameter and a residual parameter, we introduced the channel ratio $\alpha$. This framing established a unified mathematical perspective on narrow ResNet architectures, and allowed us to formally bridge the gap between continuous neural ordinary differential equations and standard feed-forward neural networks. The inability to express critical points in the input-output map fundamentally restricts a network's global approximation capabilities. We proved that the lacking expressivity forces the network to shift required critical points outside the observation domain, which leads to a ``tunnel effect''. In classification tasks with nested data, this means the decision boundary always has to intersect with the domain boundary which leads to undesired misclassifications. 

We systematically categorized the embedding capabilities of narrow ResNets across three distinct regimes of the channel ratio $\alpha$: 
\begin{itemize}
    \item The neural ODE regime ($0 < \alpha \ll 1$): When the residual channel is small relative to the skip connection, ResNets act similar to continuous neural ODEs. We provided explicit upper bounds on $\alpha$ below which the network strictly inherits the topological limitations of non-augmented neural ODEs, and hence is unable to embed critical points.

\item The MLP regime ($\alpha \gg 1$): When the residual channel dominates, the architecture reduces to a perturbed standard feed-forward neural network. We established lower bounds on $\alpha$ above which the ResNet is equally incapable of expressing critical points, mirroring the constraints of standard narrow feed-forward networks.

\item The intermediate regime ($\alpha \approx 1$): In this balanced regime, typical of standard ResNets, the embedding of critical points is generally possible. The numerical examples demonstrated that the ability to successfully embed a critical point remains sensitive to the precise channel ratio and magnitude of the initialized layer weights.
\end{itemize} 

While non-augmented ResNets can theoretically resolve the topological restrictions of their MLP and neural ODE counterparts, this structural advantage is fragile. Because of implicit regularization effects of SGD, models initialized in sub-optimal parameter regions may converge to solutions that exhibit the tunnel effect despite having the theoretical capacity to avoid it.

\vspace{5mm}

\noindent\textbf{Acknowledgments:} CK and SVK thank the DFG for partial support via the SPP2298 `Theoretical Foundations of Deep Learning'. CK thanks the VolkswagenStiftung for support via a Lichtenberg Professorship. SVK thanks the Munich Data Science Institute (MDSI) for partial support via a Linde doctoral fellowship. TW is supported by the Austrian Science Fund (FWF) 10.55776/J4681.
	
\vspace{3mm}

    \addcontentsline{toc}{section}{References}
	\bibliographystyle{abbrvurl}
	\bibliography{bibliography}

    \appendix
    \addtocontents{toc}{\protect\setcounter{tocdepth}{1}}

    \section{Relationship between ResNets, Neural ODEs and FNNs}
    \label{app:relationship}

    This appendix collects the proofs of Proposition~\ref{prop:resnet_neuralODE} and Proposition~\ref{prop:resnet_mlp}, which concern the relationship between ResNets and neural ODEs, and between ResNets and FNNs, respectively.

    \subsection{Proof of Proposition~\ref{prop:resnet_neuralODE}}
    \label{app:proof_resnet_neuralODE}

\begin{proof}
    To prove part~\ref{prop:resnet_neuralODE_a}, let $\overline{\Phi} \in \NODE$ be a neural ODE based on the initial value problem~\eqref{eq:IVP}. For the corresponding ResNet $\Phi$ we choose the transformations $\lambda$, $\tilde{\lambda}$ to be the same as for the neural ODE $\overline{\Phi}$. An explicit Euler discretization of the initial value problem~\eqref{eq:IVP} over the time interval $[0,T]$ with step size $\delta \coloneqq \frac{T}{L}$ results in
    \begin{equation}\label{eq:Euler1}
        h(t + \delta) = h(t) + \delta \cdot f(h(t),\theta(t)).
    \end{equation}
    By defining $t_l \coloneqq l \delta$ for $l \in \{0,\ldots,L\}$, $h_l \coloneqq h(t_l)$ for $l \in \{0,\ldots, L\}$, and $\theta_l \coloneqq \theta(t_{l-1})$ for $l \in \{1,\ldots, L\}$, the discretization~\eqref{eq:Euler1} simplifies at the time points $t_l$, $l \in \{0,\ldots,L-1\}$ to
    \begin{equation} \label{eq:Euler2} 
        h_{l+1} = h_{l} + \delta \cdot f(h_{l},\theta_{l+1}).
    \end{equation}
    After the index shift $l \mapsto l-1$, the update rule~\eqref{eq:Euler2} agrees with the ResNet update rule~\eqref{eq:resnet_updaterule} with $\eps = 1$ and $f_l(\cdot,\cdot) = f(\cdot,\cdot)$ for all $l \in \{1,\ldots,L\}$. As $f\in C^{k,0}(\R^{\nhid}\times \R^p,\R^{\nhid})$, it follows that $f(\cdot,\theta_l)\in C^{k}(\R^{\nhid},\R^{\nhid})$ for each fixed $\theta_l\in \R^{p_l}$, and hence assertion~\ref{prop:resnet_neuralODE_a} follows.
    
    For part~\ref{prop:resnet_neuralODE_b}, we consider a ResNet $\Phi \in \RN$ with $L$ hidden layers and update rule
    \begin{equation}  \label{eq:Euler3} 
        h_{l} = \eps h_{l-1} + \delta f_\textup{RN}(h_{l-1},\theta_{l})
    \end{equation}
    for $l \in \{1, \ldots, L\}$, where the parameter dimensions $p_l = p$ and the residual functions $f_l(\cdot,\theta_l) \coloneqq f_\textup{RN}(\cdot,\theta_l)$ are independent of the layer index $l$. To define a corresponding neural ODE, let $\theta\in C^\infty(\R,\R^p)$ be a smooth interpolation of the parameters with $\theta_l \coloneqq \theta(t_{l-1})$ for $l \in \{1,\ldots,L\}$, where $t_l \coloneqq l \delta$ for $l \in \{0,\ldots,L\}$. Such a smooth interpolation always exists, for example by using Lagrange polynomials~\cite{Humpherys2020}. Let $\overline{\Phi} \in \NODE$ be a neural ODE with $T \coloneqq \delta L$ defined by the two transformations $\lambda$, $\tilde{\lambda}$ of the ResNet $\Phi$ and based on the initial value problem 
    \begin{equation}\label{eq:Euler4} 
        \frac{\dd h}{\dd t} = f(h(t),\theta(t)) \coloneqq \frac{\eps-1}{\delta}\cdot h(t) + f_\textup{RN}(h(t),\theta(t)).
    \end{equation}
    Since $f_\textup{RN}\in C^{k,0}(\R^{\nhid}\times\R^p,\R^{\nhid})$ by assumption and the linear term $\frac{\eps-1}{\delta}\cdot h$ is smooth in both $h$ and $\theta$, the vector field $f$ defined in~\eqref{eq:Euler4} satisfies $f\in C^{k,0}(\R^{\nhid}\times\R^p,\R^{\nhid})$. Together with the smoothness of the parameter function $\theta$, this confirms $\overline{\Phi}\in\NODE$.
    
    Analogously to part~\ref{prop:resnet_neuralODE_a}, an explicit Euler discretization of the initial value problem~\eqref{eq:Euler4} over the time interval $[0,T]$ with step size $\delta$ yields
    \begin{equation}\label{eq:Euler5}
        h(t + \delta) = h(t) + (\eps-1)\cdot h(t) + \delta \cdot f_\textup{RN}(h(t),\theta(t)) = \eps h(t) + \delta \cdot f_\textup{RN}(h(t),\theta(t)).
    \end{equation}
    Using $t_l \coloneqq l \delta$, $h_l \coloneqq h(t_l)$ for $l \in \{0,\ldots,L\}$, and $\theta_l \coloneqq \theta(t_{l-1})$ for $l \in \{1,\ldots, L\}$, the discretization~\eqref{eq:Euler5} simplifies at the time points $t_l$, $l \in \{0,\ldots,L-1\}$ after the index shift $l \mapsto l-1$ to the ResNet update rule~\eqref{eq:Euler3}, such that assertion~\ref{prop:resnet_neuralODE_b} follows. 
\end{proof}

    \subsection{Proof of Proposition~\ref{prop:resnet_mlp}}
    \label{app:proof_resnet_mlp}

\begin{proof}
    To prove part~\ref{prop:resnet_mlp_a}, given a ResNet $\Phi \in \textup{RN}_{0,\delta}^k(\mathcal{X},\R^{\nout})$ with the notation introduced in Section~\ref{sec:resnet_model}, we construct an equivalent FNN $\overline{\Phi} \in \FNN$ as follows. We denote the layers and layer maps of the FNN with a dash to distinguish them from those of the ResNet.
    \begin{itemize}
        \item As the ResNet $\Phi$ consists of an input transformation $\lambda$, hidden layers $h_0,\ldots,h_L$, and an output transformation $\tilde{\lambda}$, we consider an FNN with $L+2$ layers as introduced in Section~\ref{sec:mlp_model}. The input of the FNN is $\bar{h}_{-1} = x \in \mathcal{X}\subset \R^{\nin}$, the hidden layers are $\bar{h}_0, \ldots, \bar{h}_L$ with $\bar{h}_l \in \R^{\nhid}$, and the output is $\bar{h}_{L+1}\in\R^{\nout}$, such that the FNN is a map $\overline{\Phi}: x = \bar{h}_{-1}\mapsto \bar{h}_{L+1}$.
        \item The FNN layer dimensions are chosen to be $\bar{n}_l = \nhid$ for all $l \in \{0,\ldots,L\}$.
        \item For the hidden layers $\bar{h}_1, \ldots, \bar{h}_L$, the layer maps and parameters are chosen to agree with those of the ResNet, i.e., $\bar{f}_l(\cdot,\bar{\theta}_l) \coloneqq f_l(\cdot,\theta_l)$ and $\bar{\theta}_l \coloneqq \theta_l$ for all $l \in \{1,\ldots,L\}$.
        \item The layer map of the first FNN layer $\bar{h}_0$ is chosen as $\bar{f}_0(x,\bar{\theta}_0) \coloneqq \frac{1}{\delta}\lambda(x)$, such that $\bar{h}_0 = \delta \bar{f}_0(x,\bar{\theta}_0) = \lambda(x)$.
        \item The layer map of the output layer $\bar{h}_{L+1}$ is chosen as $\bar{f}_{L+1}(y,\bar{\theta}_{L+1}) \coloneqq \frac{1}{\delta}\tilde{\lambda}(y)$, such that $\bar{h}_{L+1} = \delta \bar{f}_{L+1}(\bar{h}_L,\bar{\theta}_{L+1}) = \tilde{\lambda}(\bar{h}_L)$.
    \end{itemize}
    Since $\lambda \in C^k(\R^{\nin},\R^{\nhid})$m $\tilde{\lambda} \in C^k(\R^{\nhid},\R^{\nout})$ and $\delta > 0$, the constructed layer maps $\bar{f}_0$ and $\bar{f}_{L+1}$ inherit the required regularity, i.e., $\bar{f}_0(\cdot,\bar{\theta}_0) \in C^k(\R^{\nin},\R^{\nhid})$ and $\bar{f}_{L+1}(\cdot,\bar{\theta}_{L+1}) \in C^k(\R^{\nhid},\R^{\nout})$.
    
    By construction it holds $\bar{h}_{-1} = x$, $\bar{h}_0 = \lambda(x)$, $\bar{h}_l = h_l$ for all $l \in \{1,\ldots,L\}$, and $\bar{h}_{L+1} = \tilde{\lambda}(h_L)$. Hence the input-output maps of the ResNet $\Phi$ and the FNN $\overline{\Phi}$ agree:
    \begin{equation*}
        \Phi(x) = \tilde{\lambda}(h_L(\lambda(x))) = \bar{h}_{L+1}(x) = \overline{\Phi}(x) \qquad \text{for all } x \in \mathcal{X},
    \end{equation*}
    and part~\ref{prop:resnet_mlp_a} follows.
    
    For part~\ref{prop:resnet_mlp_b}, the construction above can be reversed: given an FNN $\overline{\Phi} \in \FNN$ whose hidden layers all have the same dimension $\bar{n}_l = \nhid$ for $l \in \{0,\ldots,L\}$, we define a ResNet $\Phi \in \textup{RN}_{0,\delta}^k(\mathcal{X},\R^{\nout})$ by setting the input transformation $\lambda(x) \coloneqq \delta \bar{f}_0(x,\bar{\theta}_0)$, the residual functions $f_l(\cdot,\theta_l) \coloneqq \bar{f}_l(\cdot,\bar{\theta}_l)$ for $l \in \{1,\ldots,L\}$, and the output transformation $\tilde{\lambda}(y) \coloneqq \delta \bar{f}_{L+1}(y,\bar{\theta}_{L+1})$. By the same argumentation as above, the resulting ResNet $\Phi$ has the same input-output map as $\overline{\Phi}$, and part~\ref{prop:resnet_mlp_b} follows.

    In particular, if the input and output transformations $\lambda$, $\tilde{\lambda}$ have the typical form~\eqref{eq:lambda_typical}, the statements hold for canonical ResNets $\Phi \in \textup{RN}_{0,\delta,\sigma}^k(\mathcal{X},\R^{\nout})$ and MLPs $\overline{\Phi}\in\MLP$ as a special case, since the layer maps of canonical ResNets and MLPs are of the form~\eqref{eq:resnet_typical_f} and hence fulfill the regularity requirements of Definition~\ref{def:feedforward}.
\end{proof}

    \section{Distance between ResNets, Neural ODEs and FNNs}
    \label{app:distance}

    This appendix collects the proofs of Corollary~\ref{cor:node_resnet_error} and Corollary~\ref{cor:mlp_resnet_error}, which determine the distance between canonical ResNets and neural ODEs, and between canonical ResNets and MLPs, respectively.

    \subsection{Proof of Corollary~\ref{cor:node_resnet_error}}
    \label{app:cor_proof_resnet_neuralODE}
    
    \begin{proof}
	To apply Theorem~\ref{th:eulerdiscretization} to the given neural ODE, we calculate the constants $M_\theta$ and $K_\theta$. The Lipschitz constant of the vector field of the neural ODE with respect to the variable~$h$ is given by $K_\theta \coloneqq K_{\sigma} \widetilde{\omega}_\infty  \omega_\infty$, as
	\begin{align*}
		\normm{\widetilde{W} &\sigma(W h_1(t) +b) + \tilde{b} - (\widetilde{W} \sigma(W h_2(t) +b) + \tilde{b})}_\infty
		\leq \; \widetilde{\omega}_\infty  \cdot \normm{\sigma(W h_1(t) +b) -\sigma(W h_2(t) +b) }_\infty \\
		\leq &\; \widetilde{\omega}_\infty   K_\sigma \cdot\norm{Wh_1(t)+b-(Wh_2(t)+b)}_\infty \leq \; \widetilde{\omega}_\infty K_\sigma \omega_\infty \cdot \norm{h_1(t) -h_2(t)}_\infty,
	\end{align*}
	where the upper bound $\norm{\sigma'}_{\infty,\R} \leq K_\sigma$ is, by the mean value theorem, a Lipschitz constant of the activation function. 
	To calculate an upper bound $M_\theta$ for the second derivative of the sup-norm of the solution $h:[0,T] \rightarrow \R^{\nhid}$, we estimate
	\begin{align*}
		\normm{h''(t)}_\infty &= \norm{\frac{\partial}{\partial t} h'(t)}_\infty =  \norm{ \frac{\partial}{\partial t} \left(\widetilde{W} \sigma(W h(t) +b) + \tilde{b}\right)}_\infty \\
		&= \normm{\widetilde{W} \sigma'(Wh(t) + b) \cdot \left(\widetilde{W} \sigma(W h(t) +b) + \tilde{b}\right) }_\infty \\
		&\leq \widetilde{\omega}_\infty  K_{\sigma} \left( \widetilde{\omega}_\infty  S_\sigma + \tilde{\beta}_\infty\right) \eqqcolon M_\theta
	\end{align*}
	for every $t \in [0,T]$. Here, we used that for the component-wise applied activation function it holds $\norm{\sigma}_{\infty,\R} \leq S_\sigma$ and $\norm{\sigma'}_{\infty,\R} \leq K_\sigma$.
\end{proof}

    \subsection{Proof of Corollary~\ref{cor:mlp_resnet_error}}
    \label{app:cor_proof_resnet_mlp}

    \begin{proof}
    To apply Theorem~\ref{th:mlp_resnet_error} to canonical ResNets and MLPs, we calculate the constants $S_f$,  $S_\lambda$, $K_f$ and $K_{\tilde{\lambda}}$. Under the given assumptions, the residual functions $f_l$ of the ResNet are bounded by 
	\begin{equation*} \label{eq:resnet_mlp_proof_1}
		\norm{f_l(h_{l-1},\theta_l)}_\infty  = \norm{\widetilde{W}_l \sigma_l(W_l h_{l-1} + b_l) + \tilde{b}_l}_\infty \leq  \widetilde{\omega}_\infty S_\sigma + \tilde{\beta}_\infty \eqqcolon S_f 
	\end{equation*}
	for all $h_l \in \R^{\nhid}$, $\theta_l \in \Theta_l$ and $l \in \{1,\ldots,L\}$. Furthermore, it holds for the input transformation $\lambda$ \begin{equation*}\label{eq:resnet_mlp_proof_2}
		\norm{\lambda(x)}_\infty  = \norm{\widetilde{W}_0 \sigma_0(W_0 x + b_0) + \tilde{b}_0}_\infty \leq \widetilde{\omega}_\infty S_\sigma + \tilde{\beta}_\infty \eqqcolon S_\lambda  
	\end{equation*}
	for all $x \in \mathcal{X}$ and $\theta_0 \in \Theta_0$. As the output transformation $\tilde{\lambda}$ has the same structure as the residual functions $f_l$, we set $f_{L+1} \coloneqq \tilde{\lambda}$ and estimate 
	\begin{align*}
		\; \norm{f_l(y_1,\theta_l) - f_l(y_2,\theta_l)}_\infty = & \; \norm{\widetilde{W}_{l} \sigma_{l}(W_{l} y_1 +b_{l}) + \tilde{b}_{l} - (\widetilde{W}_{l} \sigma_{l}(W_{l} y_2 +b_{l}) + \tilde{b}_{l})}_\infty \\
		\leq & \; \widetilde{\omega}_\infty \cdot \norm{\sigma_{l}(W_{l} y_1 +b_{l}) -\sigma_{l}(W_{l} y_2 +b_{l}) }_\infty \\
		\leq & \; \widetilde{\omega }_\infty  K_{\sigma} \cdot\norm{W_{l}y_1+b_{l}-(W_{l}y_2+b_{l})}_\infty \\
		\leq & \; \widetilde{\omega }_\infty  K_{\sigma}  \omega_\infty \cdot \norm{y_1-y_2}_\infty
	\end{align*}
    for $y_1,y_2\in \R^{\nhid}$ and $l \in \{1,\ldots,L+1\}$. Hence, all $f_l$, $l \in \{1,\ldots,L+1\}$, are globally Lipschitz continuous with Lipschitz constant $\widetilde{\omega }_\infty  K_{\sigma} \omega_\infty$, such that it follows $K_f = \widetilde{\omega }_\infty  K_{\sigma} \omega_\infty$ and $K_{\tilde{\lambda}} = \widetilde{\omega }_\infty  K_{\sigma} \omega_\infty $. The result follows by inserting the calculated constants  $S_f$,  $S_\lambda$, $K_f$ and $K_{\tilde{\lambda}}$ into~\eqref{eq:error_resnet_mlp}.
\end{proof}

\end{document}